\newtheorem{prop}{Proposition}[section]
\newtheorem{rmq}{Remark}[section]  
\newtheorem{theo}{Theorem}[section]  
\newtheorem{lem}{Lemma}[section]
\newtheorem{cor}{Corollary}[section]
\DeclareMathOperator{\var}{Var}
\begin{document}
\title{$L^{p}$ and almost sure rates of convergence of averaged stochastic gradient algorithms: locally strongly convex objective}
\author{Antoine Godichon-Baggioni \\ Institut de Math\'ematiques de Toulouse, \\
Universit\'e Paul Sabatier, Toulouse, France \\
email: godichon@insa-toulouse.fr
} 
\date{}
\maketitle

\begin{abstract}
An usual problem in statistics consists in estimating the minimizer of a convex function. When we have to deal with large samples taking values in high dimensional spaces, stochastic gradient algorithms and their averaged versions are efficient candidates. Indeed, (1) they do not need too much computational efforts, (2) they do not need to store all the data, which is crucial when we deal with big data, (3) they allow to simply update the estimates, which is important when data arrive sequentially. The aim of this work is to give asymptotic and non asymptotic rates of convergence of stochastic gradient estimates as well as of their averaged versions when the function we would like to minimize is only locally strongly convex.
\end{abstract}

\noindent\textbf{Keywords:}  Stochastic optimization; Stochastic gradient algorithm; averaging; Robust statistics

\section{Introduction}

With the development of automatic sensors, it is more and more important to think about methods able to deal with large samples of observations taking values in high dimensional spaces such as functional spaces. We focus here on an usual stochastic optimization problem which consists in estimating
\begin{equation}\label{defipbintro}
m := \arg \min_{h \in H} \mathbb{E}\left[ g(X,h) \right] ,
\end{equation}
where $H$ is a Hilbert space and $X$ is a random variable supposed to be taking value in a space $\mathcal{X}$ and $g : \mathcal{X} \times H \longrightarrow \mathbb{R}$. One usual method, given a sample $X_{1},...,X_{n}$, is to consider the empirical problem generated by this sample, i.e to consider the $M$-estimates (see the books of \cite{HubR2009} and \cite{MR2238141} among others)
\begin{equation}\notag
\widehat{m}_{n} := \arg\min_{h\in H}\sum_{k=1}^{n}g\left( X_{k},h\right),
\end{equation}
and to approximate $\widehat{m}_{n}$ using deterministic optimization methods (see \cite{boyd2004convex} for instance).  Nevertheless, one of the most important problem of such methods is that they become computationally expensive when we deal with large samples taking values in high dimensional spaces. Thus, in order to overcome this, stochastic gradient algorithms introduced by \cite{robbins1951} are efficient candidates. Indeed, they do not need too much computational efforts, do not require to store all the data and can be simply updated, which represents a real interest when the data arrive sequentially.

\medskip

The literature is very large on this domain (see the books of \cite{Duf97}, \cite{KY03} among others) and on the method to improve their convergence which consists in averaging the Robbins-Monro estimates, which was introduced by \cite{ruppert1988efficient} and whose first convergence results were given by \cite{PolyakJud92}. Many asymptotic results exist in the literature when data lies in finite dimensional spaces (see \cite{Duf97}, \cite{pelletier1998almost}, or \cite{Pel00} for instance) but the proofs can not be directly adapted for infinite dimensional spaces. Moreover, an asymptotic result such as a Central Limit Theorem does not give any clue of how far the distribution of the estimate is from its asymptotic law for a fixed sample size $n$. Then, non asymptotic properties are always desirable for statisticians who deal with real data (see the nice arguments of \cite{rudelson2014recent} for example). As a consequence, these last few years, statisticans have more and more focused on non asymptotic rates of convergence. For example, \cite{bach2013non} and \cite{bach2014adaptivity} give some general conditions to get the rate of convergence in quadratic mean of averaged stochastic gradient algorithms, while \cite{ghadimi2012optimal}, for instance, focus on non asymptotic rates for strongly convex stochastic composite optimization. 

\medskip

The aim of this work is to seek inspiration in the  demonstration methods introduced by \cite{CCG2015} and improved by \cite{godichon2015} and \cite{CG2015} to give convergence results for stochastic gradient algorithms and their averaged versions when the function we would like to minimize is only locally strongly convex. First, we establish almost sure rates of convergence of the estimates in general Hilbert spaces. Furthermore, as mentioned above, asymptotic results are often non sufficient, and $L^{p}$ rates of convergence of the algorithms are so given. 

\medskip

The paper is organized as follows. Section \ref{sectionalgoassump} introduces the framework, assumptions, the algorithms and some convexity properties on the function we would like to minimize. Two examples of application are given in Section \ref{sectionapp}: we first focus on the estimation of geometric quantiles, which are a generalization of the real quantiles introduced by \cite{Cha96}. They are robust indicators which can be useful in statistical depth and outliers detection (see \cite{serfling2006depth}, \cite{chen2009outlier} or \cite{hallin2006semiparametrically}). In a second time, we focus on the estimation of generalized $p$-means (\cite{polya}, \cite{borwein}), used in several domains such that computer vision (\cite{turaga}) or medical imaging (\cite{goh}). In a third time, stochastic gradient algorithms can be applied in several regressions (\cite{bach2014adaptivity}, \cite{cohen2016projected}) and we focus on robust logistic regression. In Section \ref{sectionrate}, the almost sure and $L^{p}$ rates of convergence of the estimates are given. Our theoretical results are illustrated by numerical experiments in Section \ref{sectionsimu}. Finally, the proofs are postponed in Section \ref{sectionproof} and in Appendix.

\section{The algorithms and assumptions}\label{sectionalgoassump}
\subsection{Assumptions and general framework}
Let $H$ be a separable Hilbert space such as $\mathbb{R}^{d}$ or $L^{2}(I)$ for some closed interval $I \subset \mathbb{R}$. We denote by $\left\langle .,. \right\rangle$ its inner product and by $\left\| . \right\|$ the associated norm. Let $X$ be a random variable taking values in a space $\mathcal{X}$, and let $G: H \longrightarrow \mathbb{R}$ be the function we would like to minimize, defined for all $h \in H$ by
\begin{equation}\label{defipbg}
G (h) := \mathbb{E}\left[ g(X,h ) \right],
\end{equation}
where $g: \mathcal{X} \times H \longrightarrow \mathbb{R}$. Moreover, let us suppose that the functional $G$ is convex. We consider from now that the following assumptions are fulfilled:
\begin{itemize}
\item[\textbf{(A1)}] The functional $g$ is Frechet-differentiable for the second variable almost everywhere. Moreover, $G$ is differentiable and denoting by $\Phi (.)$ its gradient, there exists $m \in H$ such that
\[
\Phi (m) := \nabla G (m) = 0 .
\]
\item[\textbf{(A2)}] The functional $G$ is twice continuously differentiable almost everywhere and for all positive constant $A$, there is a positive constant $C_{A}$ such that for all $h \in \mathcal{B}\left( m , A \right)$,
\[
\left\| \Gamma_{h} \right\|_{op} \leq C_{A} ,
\]
where $\Gamma_{h}$ is the Hessian of the functional $G$ at $h$ and $ \left\| . \right\|_{op}$ is the usual spectral norm for linear operators.
\item[\textbf{(A3)}] There exists a positive constant $\epsilon$ such that for all $h \in \mathcal{B}\left( m , \epsilon \right)$, there is a basis of $H$ composed of eigenvectors of $\Gamma_{h}$. Moreover,  let us denote by $\lambda_{\min}$ the limit inf of the eigenvalues of $\Gamma_{m}$, then $\lambda_{\min}$ is positive. Finally, for all $h \in \mathcal{B}\left( m , \epsilon \right)$, and for all eigenvalue $\lambda_{h}$ of $\Gamma_{h}$, we have $\lambda_{h} \geq \frac{\lambda_{\min}}{2} > 0$.
\item[\textbf{(A4)}] There are positive constants $\epsilon, C_{\epsilon}$ such that for all $h \in \mathcal{B}\left( m , \epsilon \right)$,
\[
\left\| \nabla G (h) - \Gamma_{m}(h-m) \right\| \leq C_{\epsilon} \left\| h-m \right\|^{2}.
\]
\item[\textbf{(A5)}] Let $f: \mathcal{X} \times H \longrightarrow \mathbb{R_{+}}$ and let $C$ be a positive constant such that for almost every $x \in \mathcal{X}$ and for all $h \in H$, $\left\| \nabla_{h}g( x,h ) \right\| \leq f(x,h) + C \left\| h -m \right\| $ almost surely.
\begin{itemize}
\item[\textbf{(a)}] There is a positive constant $L_{1}$ such that for all $h \in H$,
\[
\mathbb{E}\left[ f(X,h)^{2}\right] \leq L_{1} . 
\]
\item[\textbf{($\eta$)}] There are positive constants $\eta , L_{\eta}$ such that for all $h \in H$,
\[
\mathbb{E}\left[ f(X,h)^{2(1+\eta)}\right] \leq L_{1+\eta} . 
\]
\item[\textbf{(b)}] For all integer $q$, there is a positive constant $L_{q}$ such that for all $h \in H$,
\[
\mathbb{E}\left[ f(X,h)^{2q}\right] \leq L_{q} . 
\]
\end{itemize}
\end{itemize}

Note that for the sake of simplicity, we often denote by the same way the different constants. We now make some comments on the assumptions. First, note that no convexity assumption on the functional $g$ is required.

\medskip

Assumptions \textbf{(A2)} and \textbf{(A3)} give some properties on the spectrum of the Hessian and ensure that the functional $G$ is locally strongly convex. Note that assumption \textbf{(A3)} can be resumed as $\lambda_{\min}\left( \Gamma_{m} \right) > 0$, where $\lambda_{\min}(.)$ is the function which gives the smallest eigenvalue (or the $\liminf$ of the eigenvalues in infinite dimensional spaces) of a linear operator, if the functional $h \mapsto \lambda_{\min}\left( \Gamma_{h} \right)$ is continuous on a neighborhood of $m$.

\medskip

Moreover, assumption \textbf{(A4)} allows to bound the remainder term in the Taylor's expansion of the gradient. Note that since the functional $G$ is twice continuously differentiable and since $\Phi (m)=0$, it comes $\Phi (h) = \int_{0}^{1} \Gamma_{m+t(h-m)}(h-m)dt$, and in a particular case, $\Phi (h) - \Gamma_{m}(h-m) = \int_{0}^{1}\left( \Gamma_{m+t(h-m)}(h-m) - \Gamma_{m}(h-m)\right)dt$. Thus, assumption \textbf{(A4)} can be verified by giving a neighborhood of $m$ for each there is a positive constant $C_{\epsilon}$ such for all $h$ in this neighborhood, if we consider the functional $\varphi_{h}: [0,1] \longrightarrow H$ defined for all $t \in [0,1]$ by $\varphi_{h}(t)~:= ~\Gamma_{m+t(h~-~m)}(h~-~m)$, then for all $t \in [0,1]$,
\[
\left\| \varphi_{h}'(t) \right\| \leq C_{\epsilon}\left\| h-m \right\|^{2}.
\]

\medskip

Assumption \textbf{(A5)} enables us to bound the gradient under conditions on the functional $f$. More precisely, \textbf{(A5a)} and \textbf{(A5$\eta$)} are sufficient to get the strong consistency and the almost sure rates of convergence while we need to assume \textbf{(A5b)} to obtain the $L^{p}$ rates of convergence. This still represents a significant relaxation of the usual conditions needed to get non asymptotic results. For example, a main difference with \cite{bach2014adaptivity} and \cite{godichon2015} is that, instead of having a bounded gradient, we split this bound into two parts: one which admits $q$-th moments, and one which depends on the estimation error. Moreover, note that it is possible to replace assumption \textbf{(A5)} by 
\begin{itemize}
\item[\textbf{(A5a')}] There is a positive constant $L^{1}$ such that for all $h \in H$,
\[
\mathbb{E}\left[ \left\| \nabla_{h}g \left( X , h \right) \right\|^{2} \right] \leq L_{1} \left( 1+ \| h-m \|^{2} \right) .
\]
\item[\textbf{(A5$\eta$')}] There are positive constants $\eta , L^{1+\eta}$ such that for all $h \in H$,
\[
\mathbb{E}\left[ \left\| \nabla_{h}g \left( X , h \right) \right\|^{2} \right] \leq L_{1+\eta} \left( 1+ \| h-m \|^{2(1+\eta)} \right) .
\]

\item[\textbf{(A5b')}] For all integer $q$, there is a positive constant $L_{q}$ such that for all $h \in H$,
\[
\mathbb{E}\left[ \left\| \nabla_{h}g \left( X , h \right) \right\|^{2q} \right] \leq L_{q} \left( 1+ \| h-m \|^{2q} \right) .
\]
\end{itemize}

\begin{rmq}
These assumptions are analogous to the usual ones in finite dimension (\cite{pelletier1998almost}, \cite{Pel00}) but in our case, the proofs remain true in infinite dimension. 
\end{rmq}
\begin{rmq}
Note that the Hessian of the functional $G$ is not supposed to be compact. Then, if $H=\mathbb{R}^{d}$, its smallest eigenvalue $\lambda_{\min} \left( \Gamma_{m} \right)$ does not necessarily converge to $0$ when the dimension $d$ tends to infinity.
\end{rmq}

\subsection{The algorithms}
Let $X_{1},...,X_{n}, ...$ be independent random variables with the same law as $X$. The stochastic gradient algorithm is defined recursively by
\begin{align}
\label{defirm}Z_{n+1} & = Z_{n} - \gamma_{n} \nabla_{h}g\left( X_{n+1} , Z_{n} \right) \\
\notag & =: Z_{n} - \gamma_{n}U_{n+1},
\end{align}
where $Z_{1}$ is chosen bounded and $U_{n+1}:= \nabla_{h}g\left( X_{n+1}, Z_{n} \right)$. The step sequence $\left( \gamma_{n} \right)$ is a decreasing sequence of positive real numbers which verifies the following usual assumptions (see \cite{Duf97}) 
\begin{align*}
& \sum_{n \geq 1 } \gamma_{n} = \infty , & \sum_{n \geq 1} \gamma_{n}^{2} < \infty .
\end{align*}
The term $U_{n+1}$ can be considered as a random perturbation of the gradient $\Phi$ at $Z_{n}$. Indeed, let $\left( \mathcal{F}_{n} \right)$ be the sequence of $\sigma$-algebra defined for all $n \geq 1$ by $\mathcal{F}_{n}:=\sigma \left( X_{1},...,X_{n} \right) = \sigma \left( Z_{1},...,Z_{n} \right)$, then
\[
\mathbb{E}\left[ U_{n+1} |\mathcal{F}_{n} \right] = \nabla G (Z_{n})=:\Phi \left( Z_{n} \right).
\]

In order to improve the convergence, we now introduce the averaged algorithm (\cite{ruppert1988efficient}, \cite{PolyakJud92}) defined recursively by
\begin{equation}
\label{defiav} \overline{Z}_{n+1} = \overline{Z}_{n} + \frac{1}{n+1}\left( Z_{n+1} - \overline{Z}_{n} \right) ,
\end{equation}
with $\overline{Z}_{1}=Z_{1}$. This can also be written as follows
\[
\overline{Z}_{n}  = \frac{1}{n}\sum_{k=1}^{n} Z_{k}  .
\]

\subsection{Some convexity properties}\label{sectionconvexity}

We now give some convexity properties of the functional $G$. The proofs are given in Appendix. First, since $\nabla G(m)=0$ and since $G$ is twice continuously differentiable, note that for all $h \in H$,
\begin{align*}
\nabla G (h) & = \nabla G (h) - \nabla G (m) = \int_{0}^{1} \Gamma_{m+t(h-m)}(h-m)dt .
\end{align*}
The first proposition gives the local strong convexity of the functional $G$.
\begin{prop}\label{strongconv}
Assume \textbf{(A1)} to \textbf{(A3)} and \textbf{(A5a)} hold. For all positive constant $A$ and for all $h \in \mathcal{B}\left( m ,A \right)$,
\[
\left\langle \nabla G (h) , h - m \right\rangle \geq c_{A}\left\| h- m \right\|^{2} ,
\]
with $c_{A}:= \min \left\lbrace \frac{\lambda_{\min}}{2},\frac{\lambda_{\min}\epsilon}{2A}\right\rbrace$.  Moreover, there is a positive constant $C$ such that for all $h \in H$,
\[
\left| \left\langle \nabla G (h) ,  h-m \right\rangle \right| \leq C \left\| h-m \right\|^{2}.
\]
This result remains true replacing assumption \textbf{(A5a)} by \textbf{(A5a')}.
\end{prop}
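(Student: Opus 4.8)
The plan is to start from the identity recorded just before the statement, namely $\nabla G(h) = \int_{0}^{1} \Gamma_{m+t(h-m)}(h-m)\,dt$, which is legitimate because $G$ is twice continuously differentiable and $\nabla G(m)=0$. Pairing with $h-m$ gives
\[
\left\langle \nabla G(h), h-m\right\rangle = \int_{0}^{1} \left\langle \Gamma_{m+t(h-m)}(h-m), h-m\right\rangle\,dt ,
\]
so that both inequalities reduce to controlling the quadratic form of the Hessian along the segment joining $m$ to $h$. Writing $h_{t} := m+t(h-m)$, note that $\|h_{t}-m\| = t\|h-m\|$, and that convexity of $G$ makes every integrand nonnegative. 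The lower bound then rests essentially on \textbf{(A1)}--\textbf{(A3)} and convexity, while \textbf{(A5a)} enters only in the upper bound (and in guaranteeing $\nabla G = \mathbb{E}[\nabla_{h}g(X,\cdot)]$).

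For the lower bound I would distinguish two cases according to whether $h$ lies in $\mathcal{B}(m,\epsilon)$. If $\|h-m\|\le \epsilon$, then $h_{t}\in\mathcal{B}(m,\epsilon)$ for every $t\in[0,1]$; expanding $h-m$ in the eigenbasis of $\Gamma_{h_{t}}$ furnished by \textbf{(A3)} and using $\lambda_{h_{t}}\ge \lambda_{\min}/2$ yields $\left\langle \Gamma_{h_{t}}(h-m),h-m\right\rangle \ge \tfrac{\lambda_{\min}}{2}\|h-m\|^{2}$, and integrating gives the bound with constant $\lambda_{\min}/2$. If $\|h-m\|>\epsilon$, I would split the integral at $t^{\ast}:=\epsilon/\|h-m\|\in(0,1]$: on $[0,t^{\ast}]$ the point $h_{t}$ still lies in $\mathcal{B}(m,\epsilon)$, so the same eigenvalue estimate applies, whereas on $[t^{\ast},1]$ I discard the contribution using only $\left\langle \Gamma_{h_{t}}(h-m),h-m\right\rangle \ge 0$ (convexity). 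This produces $\left\langle \nabla G(h),h-m\right\rangle \ge \tfrac{\lambda_{\min}\epsilon}{2}\|h-m\| \ge \tfrac{\lambda_{\min}\epsilon}{2A}\|h-m\|^{2}$ since $\|h-m\|\le A$. Taking the minimum of the two constants reproduces exactly $c_{A}$.

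For the upper bound I would again split, but this time on the magnitude of $\|h-m\|$. For $\|h-m\|\le 1$, the Cauchy--Schwarz inequality combined with the local operator bound \textbf{(A2)}, $\|\Gamma_{h_{t}}\|_{op}\le C_{1}$, gives $\left|\left\langle \nabla G(h),h-m\right\rangle\right|\le C_{1}\|h-m\|^{2}$ straight from the integral identity. For $\|h-m\|\ge 1$, where \textbf{(A2)} is no longer available uniformly, I would instead bound $\|\nabla G(h)\| = \|\mathbb{E}[\nabla_{h}g(X,h)]\| \le \mathbb{E}[\|\nabla_{h}g(X,h)\|] \le \mathbb{E}[f(X,h)]+C\|h-m\|$ using \textbf{(A5)}, and then \textbf{(A5a)} with Jensen's inequality to get $\mathbb{E}[f(X,h)]\le \sqrt{L_{1}}$; since $\|h-m\|\ge 1$ the term $\sqrt{L_{1}}\|h-m\|$ is absorbed into $\sqrt{L_{1}}\|h-m\|^{2}$, yielding $\left|\left\langle \nabla G(h),h-m\right\rangle\right|\le (\sqrt{L_{1}}+C)\|h-m\|^{2}$. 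The global constant is the maximum of the two. Replacing \textbf{(A5a)} by \textbf{(A5a')} alters only this last estimate: Jensen gives $\mathbb{E}[\|\nabla_{h}g(X,h)\|]\le \sqrt{L_{1}}\sqrt{1+\|h-m\|^{2}}\le \sqrt{2L_{1}}\|h-m\|$ on $\{\|h-m\|\ge 1\}$, leading to the same conclusion.

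The main obstacle is not the local behaviour, which is routine from \textbf{(A2)}--\textbf{(A3)}, but the control of $h$ far from $m$: the eigenvalue estimate of \textbf{(A3)} and the operator bound of \textbf{(A2)} hold only on a neighbourhood of $m$, so the lower bound must exploit global convexity to neutralise the part of the segment lying outside $\mathcal{B}(m,\epsilon)$, and the upper bound must invoke the integrability assumption \textbf{(A5a)}/\textbf{(A5a')} to replace the unavailable Hessian bound. Getting the explicit constant $c_{A}$ right, in particular the factor $\epsilon/A$ coming from the truncation point $t^{\ast}$, is the one place where some care is genuinely required.
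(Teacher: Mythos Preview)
Your proposal is correct and follows essentially the same route as the paper: the lower bound is obtained by splitting the integral representation of $\langle\nabla G(h),h-m\rangle$ at $t^{\ast}=\epsilon/\|h-m\|$ and discarding the tail via convexity, and the upper bound combines the local Hessian bound \textbf{(A2)} near $m$ with the gradient bound from \textbf{(A5a)} (via Cauchy--Schwarz and Jensen) far from $m$. The only cosmetic difference is that the paper splits the upper bound at a generic radius $A$ rather than at $1$, yielding the constant $\sqrt{L_{1}}/A+C$ on the outer region, which is of course equivalent.
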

The following corollary ensures that $m$ is the unique solution of the problem defined by (\ref{defipbintro}).
\begin{cor}
Assume \textbf{(A1)} to \textbf{(A3)} and \textbf{(A5a)} hold. Then, $m$ is the unique solution of the equation
\[
\nabla G (h) = 0 ,
\]
and in a particular case, $m$ is the unique minimizer of the functional $G$.
\end{cor}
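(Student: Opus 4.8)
The plan is to read everything off the local strong convexity already furnished by Proposition~\ref{strongconv}, so that essentially no new work is required. First I would record that $m$ is a solution: this is immediate from assumption \textbf{(A1)}, which posits $\Phi(m) = \nabla G(m) = 0$.

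Next I would establish uniqueness of the zero. Let $h \in H$ be any point with $\nabla G(h) = 0$. Since every point of $H$ lies in some ball centred at $m$, I may fix a constant $A > \left\| h - m \right\|$ so that $h \in \mathcal{B}(m, A)$, and then apply Proposition~\ref{strongconv} to obtain
\[
0 = \left\langle \nabla G(h), h - m \right\rangle \geq c_{A} \left\| h - m \right\|^{2} ,
\]
where $c_{A} = \min\{ \lambda_{\min}/2, \lambda_{\min}\epsilon/(2A) \}$. By assumption \textbf{(A3)} both $\lambda_{\min} > 0$ and $\epsilon > 0$, so for this fixed $A$ the constant $c_{A}$ is strictly positive; the inequality therefore forces $\left\| h - m \right\|^{2} \leq 0$, i.e. $h = m$. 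Hence $m$ is the unique solution of $\nabla G(h) = 0$.

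Finally, for the assertion that $m$ is the unique minimizer of $G$, I would invoke the convexity of $G$ assumed at the outset together with its differentiability: for a convex differentiable functional the set of global minimizers coincides with the set of critical points. Since the preceding step shows that $m$ is the unique critical point, it is the unique minimizer. I do not anticipate any genuine obstacle here, as the whole argument collapses to a single line once Proposition~\ref{strongconv} is in hand; the only point deserving a moment of attention is to choose $A$ strictly larger than $\left\| h - m \right\|$ and to note that $c_{A}$, though depending on $A$, stays positive for that fixed value, which is exactly what makes the contradiction go through.
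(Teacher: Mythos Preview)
Your argument is correct and is exactly the intended route: the paper states this result as an immediate corollary of Proposition~\ref{strongconv} without giving a separate proof, and your derivation---pick $A>\|h-m\|$, apply the lower bound $\langle \nabla G(h),h-m\rangle\geq c_{A}\|h-m\|^{2}$ with $c_{A}>0$, then use convexity to pass from critical points to minimizers---is precisely how one unpacks that corollary.
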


\begin{rmq}
Assumption \textbf{(A3)} and Proposition \ref{strongconv} enable us to invert the Hessian at $m$ and to have a control on the "loss" of strong convexity. More precisely, assumption \textbf{(A3)} could be replaced by
\begin{itemize}
\item[\textbf{(A3')}] There is a basis composed of eigenvectors of $\Gamma_{m}$ and its smallest eigenvalue $\lambda_{\min}$ (or the $\liminf$ of the eigenvalues in the case of infinite dimensional spaces) is positive. Moreover there are positive constant $c,c'$ such that for all $A>0$ and for all $h \in \mathcal{B}\left( m , A \right)$,
\[
\left\langle \nabla G (h) , h-m \right\rangle \geq \min \left\lbrace c,\frac{c'}{A}\right\rbrace\left\| h - m \right\|^{2} .
\]
\end{itemize}
\end{rmq}
Finally, the last proposition gives an uniform bound of the remainder term in the Taylor's expansion of the gradient.
\begin{prop}\label{propdelta}
Assume \textbf{(A1)}, \textbf{(A2)}, \textbf{(A4)} and \textbf{(A5a)} hold. Then, there is a positive constant $C_{m}$ such that for all $h \in H$,
\[
\left\| \nabla G (h) - \Gamma_{m}(h-m) \right\| \leq C_{m} \left\| h - m \right\|^{2}.
\]
This result remains true replacing assumption \textbf{(A5a)} by \textbf{(A5a')}.
\end{prop}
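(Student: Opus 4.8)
The plan is to exploit assumption \textbf{(A4)}, which already provides exactly the desired inequality on the ball $\mathcal{B}(m,\epsilon)$, and to extend it to all of $H$ by a separate argument on the complementary region $\|h-m\| \geq \epsilon$. The key observation is that on this far region any bound that is linear (or constant) in $\|h-m\|$ can be converted into the required quadratic bound, since $\|h-m\| \leq \epsilon^{-1}\|h-m\|^{2}$ and $1 \leq \epsilon^{-2}\|h-m\|^{2}$ whenever $\|h-m\| \geq \epsilon$. Thus for $h \in \mathcal{B}(m,\epsilon)$ nothing beyond \textbf{(A4)} is needed, and the work lies entirely in the far region.

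For $h$ with $\|h-m\| \geq \epsilon$ I would bound the two terms of the difference separately via the triangle inequality. The linear term is controlled by \textbf{(A2)}: since $m \in \mathcal{B}(m,A)$ for every $A>0$, the operator norm $\|\Gamma_{m}\|_{op}$ is finite, whence $\|\Gamma_{m}(h-m)\| \leq \|\Gamma_{m}\|_{op}\|h-m\|$. For the gradient, passing the norm inside the expectation and invoking the splitting $\|\nabla_{h}g(x,h)\| \leq f(x,h) + C\|h-m\|$ from \textbf{(A5a)}, together with Jensen's inequality and the moment bound $\mathbb{E}[f(X,h)^{2}] \leq L_{1}$, yields
\[
\|\nabla G(h)\| \leq \mathbb{E}\!\left[ f(X,h) \right] + C\|h-m\| \leq \sqrt{L_{1}} + C\|h-m\|.
\]
Each of the resulting pieces, namely $\|\Gamma_{m}\|_{op}\|h-m\|$, $C\|h-m\|$, and the constant $\sqrt{L_{1}}$, is dominated on the far region by a multiple of $\|h-m\|^{2}$ through the two elementary inequalities noted above.

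Taking $C_{m}$ to be the maximum of $C_{\epsilon}$ and the constant assembled on the far region then gives the claimed uniform bound. The argument under \textbf{(A5a')} is identical, except that one bounds the gradient directly by $\|\nabla G(h)\| \leq \sqrt{\mathbb{E}[\|\nabla_{h}g(X,h)\|^{2}]} \leq \sqrt{L_{1}}\sqrt{1+\|h-m\|^{2}}$, which again grows at most linearly in $\|h-m\|$ and so is absorbed into $\|h-m\|^{2}$ on $\{\|h-m\| \geq \epsilon\}$. No step here is a genuine obstacle; the only point requiring care is the bookkeeping of the constants when passing from linear and constant bounds to the quadratic one, and in checking that \textbf{(A2)} indeed delivers finiteness of $\|\Gamma_{m}\|_{op}$ rather than merely a local bound on nearby Hessians.
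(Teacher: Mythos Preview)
Your proposal is correct and follows essentially the same approach as the paper: split into the near region $\mathcal{B}(m,\epsilon)$ where \textbf{(A4)} applies directly, and the far region $\|h-m\|\geq\epsilon$ where the triangle inequality together with the bounds $\|\Gamma_{m}\|_{op}<\infty$ from \textbf{(A2)} and $\|\nabla G(h)\|\leq\sqrt{L_{1}}+C\|h-m\|$ from \textbf{(A5a)} yield linear-plus-constant control, which is then upgraded to quadratic via the inequalities $\|h-m\|\leq\epsilon^{-1}\|h-m\|^{2}$ and $1\leq\epsilon^{-2}\|h-m\|^{2}$. The paper's proof is identical in structure and in the ingredients used.
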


\section{Applications}\label{sectionapp}
\subsection{Applications in general separable Hilbert spaces}
In this section, let us consider a separable Hilbert space $H$ and let $X$ be a random variable taking values in $H$. 

\medskip

\noindent\textbf{Estimating geometric quantiles: } The geometric quantile $m^{v}$ of $X$ corresponding to a direction $v$, where $v \in H$ and $\left\| v \right\| <1$, is defined by
\[
m^{v} := \arg \min_{h \in H} \mathbb{E}\left[ \left\| X - h \right\| - \left\| X \right\|  \right]  - \left\langle   h , v \right\rangle .
\]
Note that if $v=0$, the geometric quantile $m^{0}$ corresponds to the geometric median (\cite{Hal48}, \cite{Kem87}). Let $G_{v}$ be the function we would like to minimize, defined for all $h \in H$ by $G_{v}(h) := \mathbb{E}\left[ \left\| X - h \right\| + \left\langle X - h , v \right\rangle \right]$. 
Since $\| v \| < 1$, it comes
\[
\lim_{\| h \| \to \infty} G_{v} (h) = + \infty ,
\]
and $G_{v}$ admits so a minimizer $m^{v}$, which is also a solution of the following equation
\[
\nabla G_{v} (h) = - \mathbb{E}\left[ \frac{X - h}{\left\| X - h \right\|}\right] - v = 0 .
\]
Then, assumption \textbf{(A1)} is fulfilled and the stochastic gradient algorithm and its averaged version are defined recursively for all $n\geq 1$ by
\begin{align*}
m^{v}_{n+1}  & = m^{v}_{n} + \gamma_{n} \left( \frac{X_{n+1} - m^{v}_{n}}{\left\| X_{n+1} - m^{v}_{n} \right\|} + v \right) , \\
\overline{m}^{v}_{n+1} & = \overline{m}^{v}_{n} + \frac{1}{n+1}\left( m^{v}_{n+1} - \overline{m}^{v}_{n} \right) ,
\end{align*}
with $m^{v}_{1} = \overline{m}^{v}_{1}$ chosen bounded (choosing a positive constant $M$, one can take $m^{v}_{1}$ of the form $m^{v}_{1} ~:~=~ X_{1}~\mathbb{1}_{\left\| X_{1} \right\| \leq M }$ for example). In order to ensure the uniqueness of the geometric quantiles and the convergence of these estimates, we consider from now that the following assumptions are fulfilled:
\begin{itemize}
\item[\textbf{(B1)}] The random variable $X$ is not concentrated on a straight line: for all $h \in H$, there is $h' \in H$ such that $\left\langle h,h' \right\rangle =0$ and
\[
\var \left( \left\langle X ,h' \right\rangle \right) > 0 .
\]
\item[\textbf{(B2)}] The random variable $X$ is not concentrated around single points: for all positive constant $A$, there is a positive constant $C_{A}$ such that for all $h \in \mathcal{B}\left( m^{v} ,A \right)$,
\begin{align*}
& \mathbb{E}\left[ \frac{1}{\left\| X - h \right \|}\right] \leq C_{A} , & \mathbb{E}\left[ \frac{1}{\left\| X -h \right\|^{2}}\right] \leq C_{A} .
\end{align*}
\end{itemize}
Note that assumption \textbf{(B2)} is not restrictive when we deal with a high dimensional space. For example, if $H = \mathbb{R}^{d}$ with $d \geq 3$, as discussed in \cite{Chaud92} and \cite{HC}, this condition is satisfied since $X$ admits a density which is bounded on every compact subset of $\mathbb{R}^{d}$. Finally, this assumption ensures the existence of the Hessian of $G_{v}$, which is defined for all $h \in H$ by
\[
\nabla^{2}G_{v}(h) = \mathbb{E}\left[ \frac{1}{\left\| X-h \right\|} \left( I_{H} - \frac{ X-h }{\left\| X-h \right\|} \otimes \frac{ X-h }{\left\| X-h \right\|}\right)\right] ,
\]
where for all $h,h' ,h'' \in H$, $h \otimes h' (h'') := \left\langle h,h'' \right\rangle h'$. Moreover, Corollary 2.1 in \cite{CCG2015} ensures that if assumptions \textbf{(B1)} and \textbf{(B2)} are fulfilled, assumptions \textbf{(A2)} and \textbf{(A3)} are verified, while Lemma 5.1 in \cite{CCG2015} ensures that assumption \textbf{(A4)} is fulfilled. Finally, for all positive integer $p \geq 1$ and for all $h \in H$, 
\[
\mathbb{E}\left[ \left\| \frac{X-h}{\left\| X-h \right\|} + v \right\|^{2p} \right] \leq 2^{2p} ,
\]
and assumptions \textbf{(A5a)} and \textbf{(A5b)} are so verified.

\bigskip

\noindent\textbf{Estimating p-means: } Les $p \in (1,2)$, then, the $p$-mean of $X$ is defined by
\begin{equation}
\label{defpmeans} m^{(p)} = \arg\min_{h\in H} \mathbb{E}\left[ \left\| X - h \right\|^{p} \right]^{\frac{1}{p}} = \arg\min_{h\in H} \frac{1}{p}\mathbb{E}\left[ \left\| X - h \right\|^{p} \right]
\end{equation}
Note that the cases $p=1$ and $p=2$  correspond respectively to the geometric median and the usual mean. Let $G_{p}$ be the function we would like to minimize defined for all $h \in H$ by $G_{p}(h) = \frac{1}{p}\mathbb{E}\left[ \left\| X - h \right\|^{p} \right]$. This function is convex and
\[
\lim_{\| h \| \to \infty} G_{p}(h) = + \infty ,
\]
and $G_{p}$ admits so a minimizer $m^{(p)}$, which is also a solution of the following equation
\[
\nabla G_{p}(h) = - \mathbb{E}\left[ (X - h) \left\| X - h \right\|^{p-2} \right] = 0.
\]
Then, assumption \textbf{(A1)} is fulfilled and the stochastic gradient algorithm and its averaged version are defined recursively for all $n \geq 1$ by
\begin{align*}
& m_{n+1}^{(p)} = m_{n}^{(p)} + \gamma_{n} \left( X_{n+1} - m_{n}^{(p)} \right) \left\| X_{n+1} - m_{n}^{(p)} \right\|^{p-2} \\
& \overline{m}_{n+1}^{(p)} = \overline{m}_{n}^{(p)} + \frac{1}{n+1} \left( m_{n+1}^{(p)} - \overline{m}_{n}^{(p)} \right) .
\end{align*}
In order to ensure some differentiability properties and the convergence of the estimates, les us now introduce some assumptions:
\begin{itemize}
\item[\textbf{(B1a')}] The random variable $X$ admits a moment of order $2p-2$.
\item[\textbf{(B1$\eta$')}] There is $\eta > 0$ such that $X$ admits a moment of order $(2p-2)(1+\eta)$.
\item[\textbf{(B1b')}] For all positive integer $q$, the random variable $X$ admits a moment of order $q$. 
\item[\textbf{(B2')}] The random variable $X$ is not concentrated around single points: for all positive constant $A$, there is a positive constant $C_{A}$ such that for all $h \in \mathcal{B}\left( m^{(p)} ,A \right)$,
\begin{align*}
& \mathbb{E}\left[ \left\| X-h \right\|^{p-2} \right] \leq C_{A} & \mathbb{E}\left[ \left\| X-h \right\|^{p-3} \right] \leq C_{A}
\end{align*} 
\end{itemize}
Assumption \textbf{(B1a')} ensures that the gradient of $G_{p}$ is well defined and that assumption \textbf{(A5a)} is fulfilled while assumption \textbf{(B1b')} ensures that \textbf{(A5b)} is fulfilled. Indeed, for all $h \in H$,  
\begin{align*}
\left\| \nabla_{h}g_{p}\left( X,h \right) \right\| = \left\| X - h \right\|^{p-1} & \leq 2^{p-1} \left( \left\| X - m^{(p)} \right\|^{p-1} +\left\| m^{(p)} - h \right\|^{p-1} \right) \\
&  \leq 2^{p-1} \left( \left\| X - m^{(p)} \right\|^{p-1} + 1 + \left\| m^{(p)} - h \right\| \right)
\end{align*}
Remark that this example can not be treated thanks to the theoretical tools of \cite{godichon2015} and \cite{bach2014adaptivity}. Indeed, in these previous papers, uniform bounds of the gradient are needed while in this example, the gradient is bounded by a term with finite moments and a term depending on the estimation errors. Finally, assumption \textbf{(B2')} ensures that the function we would like to minimize is twice continuously differentiable and
\[
\nabla^{2}G(h) = \mathbb{E}\left[ \frac{1}{\left\| X-h \right\|^{2-p}} \left( I_{H} - (2-p)\frac{X-h}{\left\| X-h \right\|} \otimes \frac{X-h}{\| X - h \|} \right) \right]
\]
Since $p \in (1,2)$, $\lambda_{\min} \left( \nabla^{2}G (m) \right) \geq (p-1)\mathbb{E}\left[ \frac{1}{\left\| X - m \right\|} \right] > 0 $ and assumption \textbf{(A3)} is so fulfilled. Finally, thanks to \textbf{(B2')}, assumptions \textbf{(A2)} and \textbf{(A4)} are also fulfilled.

\subsection{An application in a finite dimensional space: a robust logistic regression}
Let $d \geq 1$ and $H = \mathbb{R}^{d}$. Let $\left( X ,Y \right)$ be a couple of random variables taking values in $H~\times~\left\lbrace -1,1 \right\rbrace$. The aim is to minimize the functional $G_{r}$ defined for all $h \in \mathbb{R}^{d}$ by (see \cite{bach2014adaptivity})
\[
G_{r}(h) := \mathbb{E}\left[ \log \left(  \cosh\left( Y- \left\langle X , h \right\rangle \right) \right) \right] .
\]
In order to ensure the existence and uniqueness of the solution, we consider from now that the following assumptions are fulfilled:
\begin{itemize}
\item[\textbf{(B1'')}] There exists $m^{r}$ such that $ \nabla G_{r}(m^{r}) = 0$.
\item[\textbf{(B2'')}] The Hessian of the functional $G_{r}$ at $m^{r}$ is positive.
\item[\textbf{(B3a'')}] The random variable $X$ admits a $2$-nd moment.
\item[\textbf{(B3$\eta$'')}] There is $\eta > 0$ such that $X$ admits a moment of order $2(1+\eta)$.
\item[\textbf{(B3b'')}] For all integer $p$, the random variable $X$ admits a $p$-th moment.
\end{itemize}
Assumption \textbf{(B1'')} ensures the existence of a solution while \textbf{(B2')} gives its uniqueness. Assumption \textbf{(B3a'')} ensures that the functional $G_{r}$ is twice Fréchet-differentiable and its gradient and Hessian are defined for all $h \in \mathbb{R}^{d}$ by
\begin{align*}
\nabla G_{r}(h)  & = \mathbb{E}\left[ \frac{-\sinh\left( Y- \left\langle X , h \right\rangle \right)}{\cosh\left( Y- \left\langle X , h \right\rangle \right)}X\right] , \\
\nabla^{2}G_{r}(h) & = \mathbb{E}\left[   \frac{1}{\left(\cosh\left( Y- \left\langle X , h \right\rangle \right)\right)^{2}}X\otimes X \right] .
\end{align*}

Note that assumption \textbf{(B2'')} is verified, for example, since there are positive constants $M,M'$ such that the matrix $\mathbb{E}\left[ X\otimes X \mathbb{1}_{\left\lbrace \left\| X \right\| \leq M \right\rbrace } \mathbb{1}_{\left\lbrace \left\| Y \right\| \leq M' \right\rbrace } \right]$ is positive. Then, the solution $m^{r}$ can be estimated recursively as follows:
\begin{align*}
m^{r}_{n+1}  & = m^{r}_{n} + \gamma_{n}\frac{\sinh\left(  Y_{n+1}- \left\langle X_{n+1} , m^{r}_{n} \right\rangle \right)}{\cosh\left( Y_{n+1} - \left\langle X_{n+1} , m^{r}_{n} \right\rangle \right)}X_{n+1}, \\
\overline{m}^{r}_{n+1}  & = \overline{m}^{r}_{n} + \frac{1}{n+1}\left( m^{r}_{n+1} - \overline{m}^{r}_{n} \right) ,
\end{align*}
with $\overline{m}^{r}_{1} = m^{r}_{1}$ bounded. Under assumptions \textbf{(B1'')} to \textbf{(B3a'')}, hypothesis \textbf{(A1)} to \textbf{(A5a)} are satisfied, while under additional assumption \textbf{(B3b'')}, hypothesis \textbf{(A5b)} is satisfied. Remark that this example is already treated in \cite{bach2014adaptivity}, but only for a bounded gradient, i.e under the existence of a positive constant $R$ such that
\[
\frac{\left| \sinh\left( Y- \left\langle X , h \right\rangle \right) \right|}{\cosh\left( Y- \left\langle X , h \right\rangle \right)}\left\|X \right\| \leq R,
\]
i.e only in the case where $X$ is bounded. 
\begin{rmq}
Remark that these results remain true for several cases of regression. For example, one can consider the logistic regression
\[ m^{l} := \arg\min_{h \in \mathbb{R}^{d}}\mathbb{E}\left[ \log \left(  1 + \exp\left( - Y \left\langle X , h \right\rangle \right) \right) \right] ,
\] 
with $(X,Y)$ taking values in $\mathbb{R}^{d}\times \left\lbrace -1 , 1 \right\rbrace$. Then, one can consider estimates of the form
\begin{align*}
m^{l}_{n+1}  & = m^{l}_{n} + \gamma_{n}\frac{\exp\left( - Y_{n+1} \left\langle X_{n+1} , m^{l}_{n} \right\rangle \right)}{1+\exp\left( - Y_{n+1} \left\langle X_{n+1} , m^{l}_{n} \right\rangle \right)}Y_{n+1}X_{n+1}, \\
\overline{m}^{l}_{n+1}  & = \overline{m}^{l}_{n} + \frac{1}{n+1}\left( m^{l}_{n+1} - \overline{m}^{l}_{n} \right) .
\end{align*}
\end{rmq}

\section{Rates of convergence}\label{sectionrate}
In this section, we consider a learning rate sequence $\left( \gamma_{n} \right)_{n\geq 1}$ of the form $\gamma_{n} := c_{\gamma}n^{-\alpha}$ with $c_{\gamma} > 0$ and $\alpha \in (1/2 ,1)$. Note that taking $\alpha = 1$ could be possible with a good choice of the value of the constant $c_{\gamma}$ (taking $c_{\gamma} > \frac{1}{\lambda_{\min}}$ for instance). Nevertheless, the averaging step enables us to get the optimal rate of convergence with a smaller variance than the stochastic gradient algorithm with a fastly decreasing step sequence $\gamma_{n}= c_{\gamma}n^{-1}$ (see \cite{PolyakJud92}, \cite{pelletier1998almost} and \cite{Pel00} for more details). 
\subsection{Almost sure rates of convergence}\label{sectionalmostsure}
In this section, we focus on the almost sure rates of convergence of the algorithms defined in (\ref{defirm}) and (\ref{defiav}). First, the following theorem gives the consistency of the algorithms.
\begin{theo}\label{theoconsistency}
Suppose \textbf{(A1)} to \textbf{(A3)} and \textbf{(A5a)} hold. Then, 
\begin{align*}
&\lim_{n \to \infty} \left\| Z_{n} - m \right\| = 0 \quad a.s, \\
& \lim_{n \to \infty} \left\| \overline{Z}_{n} - m \right\| = 0 \quad a.s.
\end{align*}
This result remains true replacing assumptions \textbf{(A3)} and/or \textbf{(A5a)} by \textbf{(A3')} and/or \textbf{(A5a')}.
\end{theo}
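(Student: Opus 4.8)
The plan is to prove consistency via the standard Robbins--Monro almost sure convergence machinery, treating the non-averaged iterate first and then deducing the averaged case by a Cesàro argument. The key structural fact is that $U_{n+1}$ is an unbiased estimate of the gradient given $\mathcal{F}_n$, i.e. $\E[U_{n+1}\mid\mathcal{F}_n]=\Phi(Z_n)=\nabla G(Z_n)$, so that $Z_n$ follows a stochastic approximation recursion driven by a true gradient plus a martingale-difference noise. First I would establish that the sequence $\left\| Z_n - m\right\|^2$ is an almost-supermartingale in the sense of Robbins--Siegmund. Concretely, I would expand
\[
\left\| Z_{n+1}-m\right\|^2 = \left\| Z_n - m\right\|^2 - 2\gamma_n\left\langle U_{n+1}, Z_n - m\right\rangle + \gamma_n^2 \left\| U_{n+1}\right\|^2 ,
\]
then take conditional expectation with respect to $\mathcal{F}_n$. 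The cross term becomes $-2\gamma_n\left\langle \Phi(Z_n), Z_n - m\right\rangle$, which is nonnegative and in fact bounded below using Proposition \ref{strongconv}, while the last term is controlled by \textbf{(A5a)}: since $\left\| U_{n+1}\right\| \leq f(X_{n+1},Z_n) + C\left\| Z_n - m\right\|$, taking conditional expectation and using $\E[f(X,h)^2]\leq L_1$ yields $\E\bigl[\left\| U_{n+1}\right\|^2\mid\mathcal{F}_n\bigr] \leq 2L_1 + 2C^2\left\| Z_n - m\right\|^2$.

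Putting these together I would obtain a bound of the shape
\[
\E\bigl[\left\| Z_{n+1}-m\right\|^2 \mid \mathcal{F}_n\bigr] \leq \left(1 + 2C^2\gamma_n^2\right)\left\| Z_n - m\right\|^2 - 2\gamma_n\left\langle \Phi(Z_n), Z_n-m\right\rangle + 2L_1\gamma_n^2 .
\]
Because $\sum_n \gamma_n^2 < \infty$, the Robbins--Siegmund theorem applies: it guarantees that $\left\| Z_n - m\right\|^2$ converges almost surely to a finite limit and, crucially, that $\sum_n \gamma_n \left\langle \Phi(Z_n), Z_n - m\right\rangle < \infty$ almost surely. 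From the summability of that series together with $\sum_n \gamma_n = \infty$, one extracts that $\liminf_n \left\langle \Phi(Z_n), Z_n - m\right\rangle = 0$ along a subsequence. The local strong convexity of Proposition \ref{strongconv} then forces $\left\| Z_n - m\right\|$ to approach $0$ along that subsequence; combined with the almost sure convergence of $\left\| Z_n - m\right\|^2$ to a definite limit, the only admissible limit is $0$, giving $Z_n \to m$ almost surely.

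The main obstacle is a genuine subtlety with Proposition \ref{strongconv}: the lower bound $\left\langle \nabla G(h), h-m\right\rangle \geq c_A \left\| h-m\right\|^2$ holds only on a ball $\mathcal{B}(m,A)$ with a constant $c_A$ that degrades like $1/A$ as the radius grows, so the strong convexity is purely local and cannot by itself drive the iterate toward $m$ from an arbitrary starting region. I would handle this by first arguing boundedness of $(Z_n)$: since $\left\| Z_n - m\right\|^2$ converges almost surely, the trajectory is automatically almost surely bounded, say by some random $A(\omega)$, after which the lower bound applies uniformly with constant $c_{A(\omega)}>0$ on the tail of the trajectory. This localization is what lets the $\liminf$ argument go through despite the absence of global strong convexity, and it is precisely the point where care is required. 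Finally, the averaged statement follows immediately: if $Z_n \to m$ almost surely, then the Cesàro average $\overline{Z}_n = \frac{1}{n}\sum_{k=1}^n Z_k$ converges to the same limit $m$ almost surely. For the variants, replacing \textbf{(A3)} by \textbf{(A3')} changes only the form of the local strong convexity constant and leaves the argument intact, while replacing \textbf{(A5a)} by \textbf{(A5a')} changes only the second-moment bound on $U_{n+1}$, which again feeds into the same Robbins--Siegmund inequality.
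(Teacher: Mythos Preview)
Your proposal is correct and follows essentially the same route as the paper: expand $\|Z_{n+1}-m\|^2$, use \textbf{(A5a)} to bound the conditional second moment of the gradient estimate, apply the Robbins--Siegmund theorem to obtain almost sure convergence of $\|Z_n-m\|^2$ together with $\sum_n \gamma_n\langle\Phi(Z_n),Z_n-m\rangle<\infty$, then combine $\sum_n\gamma_n=\infty$ with the local strong convexity of Proposition~\ref{strongconv} (on the a.s.\ bounded trajectory) to force the limit to be zero, and finally pass to the averaged iterate via a Ces\`aro/Toeplitz argument. One small slip: the cross term $-2\gamma_n\langle\Phi(Z_n),Z_n-m\rangle$ is nonpositive (it is the inner product that is nonnegative), and the paper additionally records the uniform bound $\sup_n\E[\|Z_n-m\|^2]<\infty$, which you do not need for consistency but which is used later in the paper.
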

The following theorem gives the almost sure rates of convergence of the stochastic gradient algorithm as well as of its averaged version under the additional assumption \textbf{(A4)}.
\begin{theo}\label{theoalmsure}
Suppose \textbf{(A1)} to \textbf{(A5$\eta$)} hold for $\eta > \frac{1}{\alpha} -1$. Then, for all $\delta  > 0$, 
\begin{align*}
\left\| Z_{n} - m \right\|^{2} & = O \left( \frac{\ln n}{n^{\alpha}} \right) \quad a.s, \\
\left\| \overline{Z}_{n} - m \right\|^{2} & = o \left( \frac{(\ln n)^{1+\delta }}{n} \right) \quad a.s.
\end{align*}
This result remains true replacing assumptions \textbf{(A3)} and/or \textbf{(A5a)} by \textbf{(A3')} and/or \textbf{(A5a')}.
\end{theo}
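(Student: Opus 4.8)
The plan is to establish the almost sure rates by combining the already-proven strong convexity (Proposition~\ref{strongconv}) and remainder control (Proposition~\ref{propdelta}) with a Robbins--Siegmund type martingale argument applied to the squared error. For the non-averaged estimate $Z_n$, I would start from the recursion~(\ref{defirm}) and expand
\[
\left\| Z_{n+1} - m \right\|^{2} = \left\| Z_{n} - m \right\|^{2} - 2\gamma_{n}\left\langle U_{n+1}, Z_{n} - m \right\rangle + \gamma_{n}^{2}\left\| U_{n+1} \right\|^{2}.
\]
Conditioning on $\mathcal{F}_{n}$ and using $\EE{U_{n+1}\mid\mathcal{F}_{n}} = \Phi(Z_{n})$ together with the local strong convexity $\left\langle \Phi(Z_{n}), Z_{n}-m \right\rangle \geq c\left\| Z_{n}-m\right\|^{2}$ on the event where $Z_n$ is near $m$ (justified by Theorem~\ref{theoconsistency}), the cross term becomes a strictly negative contraction. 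Assumption \textbf{(A5a)} controls $\EE{\left\| U_{n+1}\right\|^{2}\mid\mathcal{F}_{n}} \leq L_{1} + C\left\| Z_{n}-m\right\|^{2}$, so the $\gamma_n^2$ term is summable-in-expectation against the error. I expect the cleanest route is to prove first that $\EE{\left\| Z_{n}-m\right\|^{2}} = O(\gamma_{n}) = O(n^{-\alpha})$, then bootstrap to the almost sure statement.

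To pass from the $L^{2}$ decay to the almost sure rate $o\!\left((\ln n)^{\delta} n^{-\alpha}\right)$, the key device is to introduce the rescaled quantity $V_n := \left\| Z_n - m\right\|^{2}/\gamma_{n}$ (or a weighted version with the logarithmic factor) and show it forms a positive almost-supermartingale up to a summable remainder. Applying the Robbins--Siegmund theorem then yields almost sure convergence of $V_n$ to a finite limit, and a Borel--Cantelli / Kronecker argument sharpens this to the stated $o$-rate, the logarithmic factor $(\ln n)^{\delta}$ absorbing the difference between almost sure convergence and the bare order of magnitude. Throughout, the contribution of the martingale increment $M_{n+1} := \gamma_{n}\left(\Phi(Z_{n}) - U_{n+1}\right)$ must be shown to vanish quickly enough; its conditional variance is again bounded via \textbf{(A5a)}, so $\sum_n \EE{\left\| M_{n+1}\right\|^{2}} < \infty$ after the $L^2$ rate is in hand, and a martingale convergence argument disposes of it.

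For the averaged estimate $\overline{Z}_{n}$, the strategy is the linearization encoded in Proposition~\ref{propdelta}. Writing $\Phi(Z_k) = \Gamma_m(Z_k - m) + R_k$ with $\left\| R_k\right\| \leq C_m \left\| Z_k-m\right\|^{2}$, I would rearrange~(\ref{defirm}) into
\[
\Gamma_{m}(Z_{k}-m) = \frac{1}{\gamma_{k}}\left(Z_{k} - Z_{k+1}\right) + \left(\Phi(Z_k) - U_{k+1}\right) - R_{k},
\]
and average over $k = 1,\dots,n$. After applying $\Gamma_m^{-1}$ (legitimate since $\lambda_{\min} > 0$ by \textbf{(A3)}), the dominant term is $\frac{1}{n}\sum_k \Gamma_m^{-1}(\Phi(Z_k)-U_{k+1})$, a normalized martingale whose almost sure size is governed by a law of the iterated logarithm, producing the $n^{-1}(\ln n)^{1+\delta'}$ scaling. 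The remaining pieces---the telescoping Abel-summation term $\frac{1}{n}\sum_k \gamma_k^{-1}(Z_k - Z_{k+1})$ and the quadratic remainder $\frac{1}{n}\sum_k R_k$---are shown to be negligible relative to this rate: the remainder is bounded using the already-established $o((\ln n)^{\delta}n^{-\alpha})$ rate for $Z_k$, which is summable after division by $n$ because $2\alpha > 1$.

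The main obstacle I anticipate is controlling the Abel-summation boundary and cross terms in the averaged analysis: the step $\frac{1}{n}\sum_k \gamma_k^{-1}(Z_k-Z_{k+1})$ does not telescope cleanly because $\gamma_k$ varies, so one must carefully handle $\sum_k (Z_k-m)(\gamma_k^{-1}-\gamma_{k-1}^{-1})$ and show, using $\gamma_k^{-1}-\gamma_{k-1}^{-1} = O(k^{\alpha-1})$ and the single-step rate, that this accumulated term is $o(n^{-1/2})$ almost surely and hence negligible against the target rate. Making these estimates rigorous in infinite dimension---ensuring every bound is in Hilbert-space norm and that the boundedness of $\Gamma_m^{-1}$ is used only through $\lambda_{\min}$ rather than any compactness---is where the delicacy lies, and is precisely the point the paper emphasizes as distinguishing its approach from the finite-dimensional literature.
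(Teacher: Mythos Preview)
Your outline for the averaged estimate is essentially correct and matches the paper's approach via decomposition~(\ref{decmoy}); the only minor deviation is that you invoke a law of the iterated logarithm for the martingale term $\frac{1}{n}\sum_{k}\xi_{k+1}$, whereas the paper instead applies Robbins--Siegmund directly to $M_n' := \frac{1}{\sqrt{n(\ln n)^{1+\delta'}}}\bigl\|\sum_{k=1}^{n}\xi_{k+1}\bigr\|$, which is simpler and avoids any Hilbert-space LIL machinery.

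For the Robbins--Monro rate, however, there is a genuine gap in your proposed weighting. You suggest setting $V_{n}:=\|Z_{n}-m\|^{2}/\gamma_{n}$ (i.e.\ essentially $n^{\alpha}\|Z_{n}-m\|^{2}$, possibly divided by $(\ln n)^{\delta}$) and applying Robbins--Siegmund. But the conditional recursion reads
\[
\mathbb{E}\bigl[\|Z_{n+1}-m\|^{2}\,\big|\,\mathcal{F}_{n}\bigr]\;\leq\;(1-c\gamma_{n})\|Z_{n}-m\|^{2}+L\gamma_{n}^{2},
\]
and the additive noise $L\gamma_{n}^{2}$, once multiplied by your weight $n^{\alpha}/(\ln n)^{\delta}$, is of order $n^{-\alpha}/(\ln n)^{\delta}$, which is \emph{not} summable since $\alpha<1$. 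Hence Robbins--Siegmund does not apply with that choice. Your fallback via an $L^{2}$ rate plus Borel--Cantelli does not rescue this either: the $L^{2}$ rate in the paper (Theorem~\ref{theol2}) requires the stronger assumption \textbf{(A5b)}, which is not available here, and in any case Markov's inequality gives only $\mathbb{P}(\|Z_{n}-m\|^{2}>(\ln n)^{\delta}n^{-\alpha})\leq K(\ln n)^{-\delta}$, again non-summable.

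The paper's fix is twofold. First, it separates $Z_{n}-m=\beta_{n-1}M_{n}+\Delta_{n}$ via decomposition~(\ref{decbeta}), so that the martingale piece $\beta_{n-1}M_{n}$ enjoys the deterministic contraction $\|I_{H}-\gamma_{n}\Gamma_{m}\|_{op}\leq 1-\lambda_{\min}\gamma_{n}$ (no random entry time needed). Second, and this is the key trick you are missing, Lemma~\ref{lemmart} applies Robbins--Siegmund not with weight $n^{\alpha}$ but with weight $n^{2\alpha-1}/(\ln n)^{1+\delta}$: then the noise term becomes $\gamma_{n}^{2}\cdot n^{2\alpha-1}/(\ln n)^{1+\delta}\sim (n(\ln n)^{1+\delta})^{-1}$, which \emph{is} summable, while the ``good'' term yields $\sum_{n}\frac{n^{\alpha-1}}{(\ln n)^{1+\delta}}\|\beta_{n-1}M_{n}\|^{2}<\infty$. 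Rewriting this as $\sum_{n}\frac{1}{n\ln n}\cdot\frac{n^{\alpha}}{(\ln n)^{\delta}}\|\beta_{n-1}M_{n}\|^{2}<\infty$ and using the divergence of $\sum(n\ln n)^{-1}$ forces the $o$-rate. The non-martingale piece $\Delta_{n}$ satisfies $\Delta_{n+1}=(I_{H}-\gamma_{n}\Gamma_{m})\Delta_{n}-\gamma_{n}\delta_{n}$, and a stabilization lemma gives $\|\Delta_{n}\|=O(\|\delta_{n}\|)=O(\|Z_{n}-m\|^{2})=o(\|Z_{n}-m\|)$, which closes the argument.
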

Note that similar results are given in \cite{pelletier1998almost}, but only in finite dimension. More precisely, the given proofs cannot be  directly extended to the case where $H$ is an infinite dimensional space. For example, these methods rely on the fact that the Hessian of the functional $G$ admits finite dimensional eigenspaces, which is not necessarily true for general Hilbert spaces. Another problem is that norms are not equivalent in infinite dimensional spaces, and consequently, the Hilbert-Schmidt (or Frobenius) norm for linear operators is not necessarily finite even if the spectral norm is. For example, under assumption $\textbf{(A3)}$, if $H$ is an infinite dimensional space,
\[
\left\| \Gamma_{m} \right\|_{op} \leq C_{\left\| m \right\|} , \quad \quad \quad \text{and} \quad \quad \quad \left\| \Gamma_{m} \right\|_{H-S} = + \infty ,
\]
where $\left\| . \right\|_{H-S}$ is the Hilbert-Schmidt norm.

\subsection{$L^{p}$ rates of convergence}

In this section, we focus on the $L^{p}$ rates of convergence of the algorithms. The proofs are postponed in Section \ref{sectionproof}. The idea is to give non asymptotic results without focusing only on the rate of convergence in quadratic mean. Indeed, recent works (see \cite{CG2015} and \cite{godichon2015} for instance), confirm that having $L^{p}$ rates of convergence can be very useful to establish rates of convergence of more complex estimates.
\begin{theo}\label{theol2}
Assume \textbf{(A1)} to \textbf{(A5b)} hold. Then, for all integer $p$, there is a positive constant $K_{p}$ such that for all $n \geq 1$,
\begin{equation}
\label{ineqrecth}\mathbb{E}\left[ \left\| Z_{n} - m \right\|^{2p} \right] \leq \frac{K_{p}}{n^{p\alpha}}.
\end{equation}
This result remains true replacing assumptions \textbf{(A3)} and/or \textbf{(A5b)} by \textbf{(A3')} and/or \textbf{(A5b')}.
\end{theo}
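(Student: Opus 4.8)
The plan is to establish the bound $\EE{\nrm{Z_n - m}^{2p}} \leq K_p n^{-p\alpha}$ by induction on $p$, the base case $p=1$ being the quadratic-mean rate which I treat directly via a recursive inequality, and the inductive step leveraging lower-order moment control. The starting point is the basic recursion obtained by squaring \eqref{defirm}: writing $\nrm{Z_{n+1}-m}^2 = \nrm{Z_n - m}^2 - 2\gamma_n \langle U_{n+1}, Z_n - m\rangle + \gamma_n^2 \nrm{U_{n+1}}^2$, then conditioning on $\mathcal{F}_n$ and using $\EE{U_{n+1}\mid\mathcal{F}_n} = \Phi(Z_n)$ together with the local strong convexity of Proposition \ref{strongconv}. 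The difficulty here is that the constant $c_A$ in Proposition \ref{strongconv} degrades like $1/A$ on the ball $\mathcal{B}(m,A)$, so one cannot simply extract a uniform contraction factor; the standard remedy is to split on the event $\{\nrm{Z_n-m}\leq A\}$ versus its complement and use that the complementary event has small probability (controlled by the lower moments already bounded), or to work with a localized/truncated version of the strong convexity constant. For the raw moment $\EE{\nrm{U_{n+1}}^{2}}$ I would invoke \textbf{(A5)} to get $\EE{\nrm{U_{n+1}}^2\mid\mathcal{F}_n} \leq 2L_1 + 2C^2\nrm{Z_n-m}^2$, which feeds back into the recursion as a term proportional to $\gamma_n^2$ times the current squared error.

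For the general moment of order $2p$, I would expand $\nrm{Z_{n+1}-m}^{2p} = \bigl(\nrm{Z_n-m}^2 - 2\gamma_n \langle U_{n+1}, Z_n-m\rangle + \gamma_n^2\nrm{U_{n+1}}^2\bigr)^{p}$ using the binomial theorem, then take conditional expectation. The dominant term is $\nrm{Z_n-m}^{2p}$, the next term $-2p\gamma_n \nrm{Z_n-m}^{2(p-1)}\langle \Phi(Z_n), Z_n-m\rangle$ supplies (via Proposition \ref{strongconv}) a negative drift of order $-\gamma_n \nrm{Z_n-m}^{2p}$ on the region where $Z_n$ is close to $m$, and all remaining terms are higher order in $\gamma_n$ or involve products that can be bounded, after conditioning, by lower moments of $\nrm{U_{n+1}}$ controlled through \textbf{(A5b)} and by cross-moments of $\nrm{Z_n-m}$. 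The key mechanism is that every term other than the leading drift carries at least a factor $\gamma_n^2$ or is of strictly lower order in the error, so up to a telescoping/summation argument one obtains a recursive inequality of the schematic form $u_{n+1} \leq (1 - a\gamma_n + b\gamma_n^2) u_n + c \gamma_n \cdot (\text{lower-order moment terms}) + d\,\gamma_n^{p+1}$, where $u_n := \EE{\nrm{Z_n-m}^{2p}}$. The inductive hypothesis supplies bounds of the right order for the lower-order moment terms.

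The main obstacle, which I expect to consume most of the technical effort, is the handling of the strong-convexity constant $c_A$ being non-uniform. Because the drift coefficient is only $c_A \sim \lambda_{\min}/A$ far from $m$, a naive argument gives a rate that deteriorates with the excursion size of the algorithm. The way through is to combine the almost sure convergence and $L^p$ rates already established at lower order (Theorem \ref{theoconsistency} and the inductive hypothesis) to show that $Z_n$ enters and stays in a fixed ball $\mathcal{B}(m,\epsilon)$ with overwhelming probability, quantified so that the contribution of the complementary event to $u_{n+1}$ is negligible relative to $n^{-p\alpha}$; on the good event one then enjoys the uniform constant $c_\epsilon := \lambda_{\min}/2$ from \textbf{(A3)} and Proposition \ref{strongconv}, yielding a genuine geometric-type drift. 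Once the recursion $u_{n+1} \leq (1 - a\gamma_n)u_n + O(\gamma_n^{p+1}) + (\text{negligible})$ is in hand with $a>0$, a standard Chung-type lemma for sequences with step $\gamma_n = c_\gamma n^{-\alpha}$ converts it into $u_n = O(\gamma_n^{p}) = O(n^{-p\alpha})$, which is exactly \eqref{ineqrecth}. The replacement of \textbf{(A3)}/\textbf{(A5b)} by \textbf{(A3')}/\textbf{(A5b')} changes nothing essential, since \textbf{(A3')} directly encodes the conclusion of Proposition \ref{strongconv} and \textbf{(A5b')} supplies the same moment bounds on $\nrm{U_{n+1}}$ after absorbing the $\nrm{h-m}^{2q}$ term.
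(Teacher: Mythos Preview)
Your outline has the right overall architecture (induction on $p$, binomial expansion, drift from local strong convexity, Chung-type closure), but there is a genuine gap in how you propose to handle the non-uniform constant $c_A$, and this gap is precisely what drives the structure of the paper's proof.

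\textbf{The splitting at a fixed ball does not close.} You propose to split on $\{\nrm{Z_n-m}\leq\epsilon\}$ and control the complementary event via the lower-order moments from the inductive hypothesis. But Markov with the $(2p-2)$-th moment gives only $\mathbb{P}[\nrm{Z_n-m}>\epsilon]=O(n^{-(p-1)\alpha})$, and by Cauchy--Schwarz the bad-event contribution $\gamma_n\,\mathbb{E}[\nrm{Z_n-m}^{2p}\mathbb{1}_{\{\nrm{Z_n-m}>\epsilon\}}]$ is then at best $O(n^{-\alpha(p+1)/2})$, which is \emph{not} $O(n^{-(p+1)\alpha})$. So the ``negligible'' term is not negligible and the recursion $u_{n+1}\leq(1-a\gamma_n)u_n+O(\gamma_n^{p+1})$ does not follow from your argument.

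\textbf{A higher moment, not a lower one, enters the recursion.} The paper does not split at a fixed radius in the main recursion. Instead it linearizes via decomposition \eqref{decdelta}: the operator $I_H-\gamma_n\Gamma_m$ gives a \emph{uniform} contraction $(1-c'\gamma_n)$, and the non-uniformity of Proposition~\ref{strongconv} is absorbed into the remainder $\delta_n$ with $\nrm{\delta_n}\leq C_m\nrm{Z_n-m}^2$ (Proposition~\ref{propdelta}). This produces in the recursion for $\mathbb{E}[\nrm{Z_n-m}^{2p}]$ a term $C_2\gamma_n\,\mathbb{E}[\nrm{Z_n-m}^{2p+2}]$ --- a \emph{higher} moment, not a lower one (Lemma~\ref{majznp}). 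Your schematic recursion with only lower-order moment terms is therefore incorrect.

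\textbf{The paper's fix: a coupled double induction.} To close, the paper runs a simultaneous induction on $n$ proving both $\mathbb{E}[\nrm{Z_n-m}^{2p}]\leq K_p n^{-p\alpha}$ and $\mathbb{E}[\nrm{Z_n-m}^{2p+2}]\leq C_{\beta,p}n^{-\beta p}$ for some $\beta\in(\alpha,\tfrac{p+2}{p}\alpha-\tfrac{1}{p})$. The auxiliary $(2p+2)$-recursion (Lemma~\ref{majznp2}) is where a splitting argument \emph{is} used, but at the growing radius $cn^{1-\alpha}$: on that ball Proposition~\ref{strongconv} yields a drift of order $1/n$, and the bad event $\{\nrm{Z_n-m}>cn^{1-\alpha}\}$ is controlled by Markov with an \emph{arbitrarily high} moment $q$, which is available thanks to the preliminary uniform bound $\sup_n\mathbb{E}[\nrm{Z_n-m}^{2q}]\leq M_q$ (Lemma~\ref{majzn}). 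You are missing both the $(2p+2)$-coupling and this uniform-boundedness lemma; without them the induction does not go through.
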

Finally, the last theorem gives the $L^{p}$ rates of convergence of the averaged estimates.
\begin{theo}\label{theomoy}
Assume \textbf{(A1)} to \textbf{(A5b)} hold. Then, for all integer $p$, there is a positive constant $K_{p}'$ such that for all $n \geq 1$,
\[
\mathbb{E}\left[ \left\| \overline{Z}_{n} - m \right\|^{2p} \right] \leq \frac{K_{p}'}{n^{p}}.
\]
This result remains true replacing assumptions \textbf{(A3)} and/or \textbf{(A5b)} by \textbf{(A3')} and/or \textbf{(A5b')}.
\end{theo}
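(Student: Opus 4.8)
The plan is to linearize the recursion around $m$ and reduce the study of $\overline{Z}_n - m$ to that of a normalized martingale sum, leaning on the $L^p$ rate already obtained for the non-averaged estimates in Theorem~\ref{theol2}. First I would write the basic recursion $Z_{k+1}-m = (Z_k-m) - \gamma_k \Phi(Z_k) - \gamma_k \xi_{k+1}$, where $\xi_{k+1} := U_{k+1} - \Phi(Z_k)$ is a martingale difference for $(\mathcal{F}_k)$ since $\mathbb{E}[U_{k+1}\mid\mathcal{F}_k] = \Phi(Z_k)$. Using Proposition~\ref{propdelta}, I substitute $\Phi(Z_k) = \Gamma_m(Z_k-m) + \Delta_k$ with $\|\Delta_k\| \leq C_m\|Z_k-m\|^2$, and solve for the linear part:
\[
\Gamma_m(Z_k-m) = \frac{(Z_k-m)-(Z_{k+1}-m)}{\gamma_k} - \Delta_k - \xi_{k+1}.
\]
Averaging over $k=1,\dots,n$ and using linearity of the fixed operator $\Gamma_m$ gives $\Gamma_m(\overline{Z}_n-m) = T_1 - T_2 - T_3$, where $T_1 := \frac1n\sum_k \gamma_k^{-1}[(Z_k-m)-(Z_{k+1}-m)]$, $T_2 := \frac1n\sum_k \Delta_k$, and $T_3 := \frac1n\sum_k \xi_{k+1}$. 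Since assumption~\textbf{(A3)} bounds the eigenvalues of $\Gamma_m$ below by $\lambda_{\min}/2 > 0$, the operator $\Gamma_m$ is invertible with $\|\Gamma_m^{-1}\|_{op} \leq 2/\lambda_{\min}$, so
\[
\|\overline{Z}_n-m\| \leq \frac{2}{\lambda_{\min}}\bigl(\|T_1\| + \|T_2\| + \|T_3\|\bigr),
\]
and it suffices to bound each $\mathbb{E}\|T_i\|^{2p}$ by $O(n^{-p})$.

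Next I would estimate $T_1$ and $T_2$, taking $L^{2p}$ norms and using Minkowski's inequality on the sums. For $T_1$, Abel summation yields $\frac1n[\gamma_1^{-1}(Z_1-m) - \gamma_n^{-1}(Z_{n+1}-m) + \sum_{k=2}^n(\gamma_k^{-1}-\gamma_{k-1}^{-1})(Z_k-m)]$; with $\gamma_k = c_\gamma k^{-\alpha}$ the increments $\gamma_k^{-1}-\gamma_{k-1}^{-1}$ are $O(k^{\alpha-1})$, and inserting $\|Z_k-m\|_{L^{2p}} \lesssim k^{-\alpha/2}$ from Theorem~\ref{theol2} shows each contribution is $O(n^{\alpha/2-1}) = o(n^{-1/2})$ in $L^{2p}$, hence $o(n^{-p})$ once raised to the power $2p$. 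For $T_2$, Proposition~\ref{propdelta} together with Theorem~\ref{theol2} applied with exponent $4p$ gives $\|\Delta_k\|_{L^{2p}} \leq C_m\,\|Z_k-m\|^2_{L^{4p}} \lesssim k^{-\alpha}$, so $\|T_2\|_{L^{2p}} \lesssim n^{-1}\sum_{k\leq n}k^{-\alpha} \lesssim n^{-\alpha}$; because $\alpha > 1/2$ this gives $\mathbb{E}\|T_2\|^{2p} \lesssim n^{-2p\alpha} = o(n^{-p})$, which is precisely where the constraint $\alpha > 1/2$ enters.

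The leading and most delicate term is the martingale sum $T_3$. Here I would invoke a Burkholder--Davis--Gundy (or Rosenthal-type) moment inequality valid for Hilbert-space-valued martingales to obtain
\[
\mathbb{E}\left\|\sum_{k=1}^n \xi_{k+1}\right\|^{2p} \lesssim \mathbb{E}\left[\left(\sum_{k=1}^n \mathbb{E}\bigl[\|\xi_{k+1}\|^2\mid\mathcal{F}_k\bigr]\right)^{p}\right] + \sum_{k=1}^n \mathbb{E}\|\xi_{k+1}\|^{2p}.
\]
Assumption~\textbf{(A5)} gives $\|U_{k+1}\| \leq f(X_{k+1},Z_k) + C\|Z_k-m\|$, so \textbf{(A5a)} yields $\mathbb{E}[\|\xi_{k+1}\|^2\mid\mathcal{F}_k] \leq 2L_1 + 2C^2\|Z_k-m\|^2$ while \textbf{(A5b)} controls the $2p$-th moments; combined with the summability of $\mathbb{E}\|Z_k-m\|^2$ and $\mathbb{E}\|Z_k-m\|^{2p}$ from Theorem~\ref{theol2} (valid since $\alpha<1$), both terms on the right are $O(n^p)$, whence $\mathbb{E}\|T_3\|^{2p} = n^{-2p}\,O(n^p) = O(n^{-p})$. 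Collecting the three bounds and using $\|\Gamma_m^{-1}\|_{op} < \infty$ delivers the claimed rate.

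The hard part will be the Hilbert-space martingale moment inequality controlling $T_3$: one must ensure the constants are \emph{dimension-free} so that the infinite-dimensional case is genuinely covered (Hilbert spaces have martingale type $2$, which is what makes such an inequality available), and one must correctly reduce the conditional-variance and higher-moment brackets to sums of the $L^{2p}$ errors already bounded in Theorem~\ref{theol2}. The remaining terms $T_1$ and $T_2$ are routine provided the exponent bookkeeping ($\alpha \in (1/2,1)$) is handled carefully.
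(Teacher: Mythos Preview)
Your decomposition $\Gamma_m(\overline{Z}_n-m)=T_1-T_2-T_3$ and your treatment of $T_1$ and $T_2$ via Abel summation and Theorem~\ref{theol2} coincide exactly with the paper's argument (decomposition~(\ref{decmoy}) and the bounds $R_{1,p},\dots,R_{4,p}$). The only substantive difference is how the martingale term $T_3$ is handled. The paper does \emph{not} invoke a Burkholder--Rosenthal inequality; instead it proves $\mathbb{E}\bigl\|\sum_{k\le n}\xi_{k+1}\bigr\|^{2p}\le C_p n^{p}$ by a direct strong induction on $p$: expanding $\|M_{n+1}\|^{2p}$ from $M_{n+1}=M_n+\xi_{n+2}$, controlling all cross terms with the induction hypothesis at levels $p'<p$ and with the moment bounds on $\xi$ coming from \textbf{(A5b)} and Theorem~\ref{theol2}, and closing a recursion $\mathbb{E}\|M_{n+1}\|^{2p}\le (1+B_0\gamma_n^2)\mathbb{E}\|M_n\|^{2p}+B_1 n^{p-1}$ that iterates to $O(n^{p})$. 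Your route is shorter and perfectly valid provided you cite a Rosenthal/Pinelis inequality with \emph{dimension-free} constants (Hilbert spaces being $(2,1)$-smooth, this is available, e.g.\ Pinelis 1994); the paper's route buys self-containment and avoids importing that result. One small correction: the sequences $\mathbb{E}\|Z_k-m\|^{2}$ are not \emph{summable} for $\alpha<1$ (they decay like $k^{-\alpha}$); what you actually need and use is that their partial sums grow like $n^{1-\alpha}=o(n)$, so the conditional-variance bracket is $2L_1 n + O(n^{1-\alpha})$ and its $p$-th moment is still $O(n^{p})$. With that wording fixed, your argument goes through.
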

As done in \cite{CCG2015} and \cite{godichon2015}, one can check that, under assumptions, these rates of convergence are the optimal ones for Robbins-Monro algorithms and their averaged versions, i.e one can prove that there are positive constants $c,c'$ such that for all $n \geq 1$,
\begin{align*}
& \mathbb{E}\left[ \left\| Z_{n} - m \right\|^{2} \right] \geq \frac{c}{n^{\alpha}}, & \mathbb{E}\left[ \left\| \overline{Z}_{n} - m \right\|^{2} \right] \geq \frac{c'}{n}.
\end{align*}

\begin{rmq}
One can obtain the same $L^{p}$ and almost sure rates of convergence for the stochastic gradient algorithm replacing assumption \textbf{(A4)} by
\begin{itemize} 
\item[\textbf{(A4')}] There are positive constants $\epsilon > 0$ and $\beta \in (1,2]$ such that for all $h \in \mathcal{B}\left( m , \epsilon \right)$ 
\[
\left\| \nabla G (h) - \Gamma_{m}(h-m) \right\| \leq C_{\beta}\left\| h-m \right\|^{\beta}.
\]
\end{itemize}
Moreover, one can get the same $L^{p}$ and almost sure rates of convergence for the averaged algorithm replacing \textbf{(A4)} by \textbf{(A4')} and taking a step sequence of the form $\gamma_{n}:= c_{\gamma}n^{-\alpha}$ with $\alpha \in ( \beta^{-1} , 1)$.
\end{rmq}

\begin{rmq}
Let $p$ be a positive integer, it is possible to get the $L^{2p}$ rates of convergence of the Robbins-Monro algorithm just supposing that there is a positive integer $q$ such that $q~> ~2~p~+ ~2$ and a positive constant $L_{q}$ such that $\mathbb{E}\left[ f\left( X,h \right)^{2q} \right] \leq L_{q}$ (or such that $\mathbb{E}\left[ \nabla_{h}g \left( X , h \right) \right] \leq L_{q}\left( 1 + \left\| h-m \right\|^{2q}\right)$) and taking a step sequence of the form $\gamma_{n}:= c_{\gamma}n^{-\alpha}$ with $\alpha \in \left( \frac{1}{2} , \frac{q}{p+2+q} \right)$.  
\end{rmq}

\section{Simulation study}\label{sectionsimu}
In this section, we consider a random gaussian vector $X \sim \mathcal{N}\left( 0 , I_{100} \right)$ taking values in $\mathbb{R}^{100}$, and we aim to estimate the $p$-mean $m^{(p)}$ of $X$ with $p=1.5$. Note that in this case, $m^{(p)} = 0_{\mathbb{R}^{100}}$. We now consider $q$ samples $X_{1,1}, \ldots ,X_{1,n},\ldots ,X_{q,1} , \ldots ,X_{q,n}$ with a size $n$. In order to compare the different estimates, for a fixed sample size $n$, we will consider the empirical quadratic mean error of the estimates, i.e given an estimate $\hat{m}$ of $m$ and the associate estimates $\hat{m}_{1,n}, \ldots , \hat{m}_{q,n}$, we will consider
\[
\text{QME}\left( \hat{m} , m \right) = \frac{1}{q}\sum_{i=1}^{q} \left\| \hat{m}_{i,n} - m \right\|^{2} .
\] 
In order to initialize the algorithms, we take the first data, i.e $m_{i,1}^{(p)} = X_{i,1}$. In Figure \ref{comp}, we consider a step sequence $\gamma_{n}=c_{\gamma}n^{-\alpha}$ with $c=2$ and $\alpha = 0.66$. One can check that the averaged algorithm converges faster than the gradient and become better after having dealt with a small number of data (about $50$). This quite bad behavior on the first step can be explained by a quite bad initialization of the gradient algorithm which so spend some time before turning around the target.
\begin{figure}[!h]\centering
\includegraphics[scale=0.8]{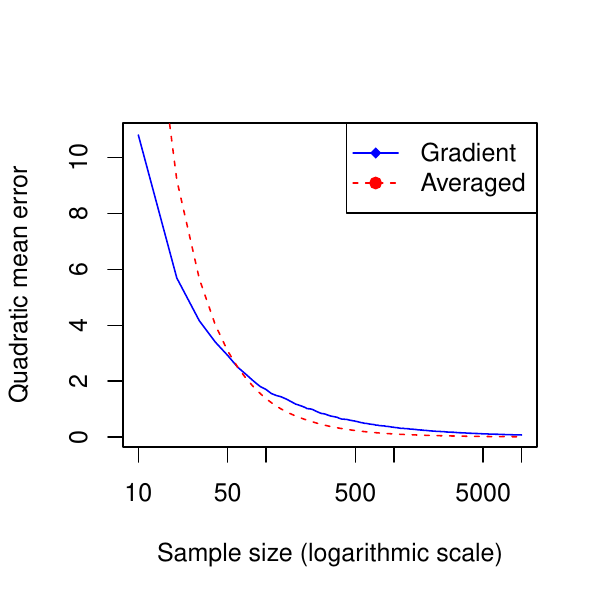}
\caption{Comparison of the evolution of the quadratic mean error of gradient estimates (in blue) and  of their averaged version (in red) in relation to the sample size. \label{comp}}
\end{figure}
In figure \ref{alpha}, we study the impact of the choice of $\alpha$ on the performance of the estimates for a fixed constant $c_{\gamma} = 2$. The case where $\alpha = 1$ is not considered since it needs to have informations on the smallest eigenvalue of the Hessian of the functional we would like to minimize, informations that are usually unknown. Without any surprise (in view of Theorems \ref{theoalmsure} an  \ref{theol2}), gradient estimates seems to converge faster when $\alpha$ increases. Inversely, for small sample size, the averaged version seems to converge faster when $\alpha$ decreases for small sample size, before having analogous behaviors for $n= 1000$. This can be explained by the fact that the less important is $\alpha$, the more the gradient estimates will "move", and the more they have a chance to turn around the target quickly. 

\begin{figure}[!h]\centering
\includegraphics[scale=0.8]{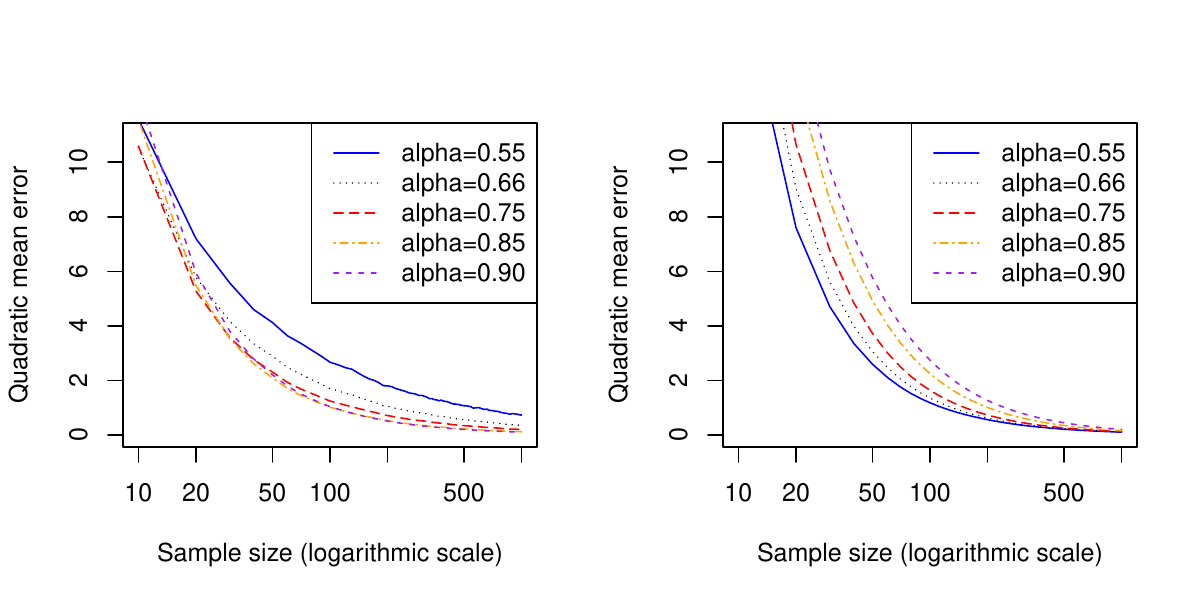}
\caption{\label{alpha} Comparison of the quadratic mean error of gradient estimates (on the left) and of their averaged version (on the right) in relation to the sample size for $\alpha=0.55,0.66,0.75,0.85,0.9$.} 
\end{figure}

Finally, in Table \ref{table}, we study the impact on the estimates of the choices of $\alpha$ and $c_{\gamma}$ for a moderate sample size $n= 10^{4}$. As expected, one can see that averaged estimates are globally better than gradient ones and are more stable in relation to the choice of the step sequence. The quite critical choices of step sequence for the averaged algorithm are when we both take $c_{\gamma}$ small and $\alpha$ close to $1$. This is not surprising because here again, the gradient steps need too much data before turning around the target, since, for example,
\[
\sum_{i=1}^{10^{4}}i^{-0.9} \simeq 15.7.
\]

\begin{table}[!h]\centering
\begin{tabular}{lr|rrrrr}
 \multicolumn{2}{c}{} & \multicolumn{5}{c}{\textbf{Gradient estimates}}\\
& & & & $\alpha$ & & \\
& & 0.55 & 0.66 & 0.75 & 0.85 & 0.9 \\ 
  \hline
& 1 & 9.95 & 3.76 & 1.84 & 1.07 & 2.33 \\ 
&   2 & 20.29 & 7.39 & 3.40 & 1.52 & 1.15 \\ 
$c_{\gamma}$ &  5 & 50.34 & 17.80 & 8.08 & 3.37 & 2.20 \\ 
&   10 & 101.75 & 36.40 & 15.79 & 6.54 & 4.18 \\ 
&   20 & 209.05 & 73.62 & 31.32 & 12.87 & 7.94 \\
\end{tabular} 
\begin{tabular}{lr|rrrrr}
 \multicolumn{2}{c}{} & \multicolumn{5}{c}{\textbf{Averaged estimates}}\\
& & & & $\alpha$ & & \\
& & 0.55 & 0.66 & 0.75 & 0.85 & 0.9 \\ 
  \hline
& 1 & 1.05 & 1.12 & 1.41 & 4.02 & 12.05 \\ 
&   2 & 1.00 & 1.05 & 1.11 & 1.34 & 1.63 \\ 
$c_{\gamma}$ &  5 & 1.01 & 1.01 & 1.03 & 1.08 & 1.13 \\ 
 &  10 & 1.01 & 1.00 & 1.02 & 1.05 & 1.06 \\ 
 &  20 & 0.99 & 1.00 & 0.99 & 1.04 & 1.03  \\
\end{tabular}
\caption{\label{table} Quadratic mean errors ($.10^{-2}$) of the gradient estimates (on the left) and of averaged estimates (on the right) for a sample size $n=10000$ for different $\alpha$ and $c_{\gamma}$.}
\end{table}

\section{Proofs}\label{sectionproof}

\subsection{Some decompositions of the algorithms}
In order to simplify the proofs thereafter, we introduce some usual decompositions of the algorithms. First, let us recall that the Robbins-Monro algorithm is defined by
\begin{equation}\label{fc}
Z_{n+1} = Z_{n} - \gamma_{n}U_{n+1},
\end{equation}
with $U_{n+1}:= \nabla_{h} g\left( X_{n+1},Z_{n} \right)$.  Then, let $\xi_{n+1}:= \Phi (Z_{n}) - U_{n+1}$, equality (\ref{fc}) can be written as
\begin{equation}
\label{decxi} Z_{n+1} - m = Z_{n} - m - \gamma_{n} \Phi (Z_{n} ) + \gamma_{n}\xi_{n+1}.
\end{equation}
Note that $\left( \xi_{n} \right)$ is a martingale differences sequence adapted to the filtration $\left( \mathcal{F}_{n} \right)$. Furthermore, linearizing the gradient, equation (\ref{decxi}) can be written as
\begin{equation}
\label{decdelta} Z_{n+1} - m = \left( I_{H} - \gamma_{n}\Gamma_{m} \right) \left( Z_{n} - m \right) + \gamma_{n}\xi_{n+1} - \gamma_{n} \delta_{n} ,
\end{equation} 
where $\delta_{n} := \Phi (Z_{n} ) - \Gamma_{m}\left( Z_{n} - m \right)$ is the remainder term in the Taylor's expansion of the gradient. Note that thanks to Proposition \ref{propdelta}, there is a positive constant $C_{m}$ such that for all $n \geq 1$, $\left\| \delta_{n} \right\| \leq C_{m} \left\| Z_{n} - m \right\|^{2}$. Finally, by induction, we have the following usual decomposition
\begin{equation}
\label{decbeta} Z_{n} - m = \beta_{n-1}\left( Z_{1} - m \right) + \beta_{n-1}M_{n} - \beta_{n-1}R_{n},
\end{equation}
with
\begin{align*}
& \beta_{n-1} := \prod_{k=1}^{n-1} \left( I_{H} - \gamma_{k}\Gamma_{m}\right) , &  M_{n} := \sum_{k=1}^{n-1}\gamma_{k}\beta_{k}^{-1}\xi_{k+1} , \\
& \beta_{0}:=I_{H} , & R_{n} := \sum_{k=1}^{n-1}\gamma_{k}\beta_{k}^{-1}\delta_{k} . 
\end{align*}

In the same way, in order to get the rates of convergence, we need to exhibit a new decomposition of the averaged algorithm. In this aim, equality (\ref{decdelta}) can be written as
\[
\Gamma_{m}\left( Z_{n} - m \right) = \frac{Z_{n} - m}{\gamma_{n}} - \frac{Z_{n+1} - m}{\gamma_{n}} + \xi_{n+1} - \delta_{n}.
\]
As in \cite{Pel00}, summing these equalities, applying Abel's transform and dividing by $n$, we have
\begin{equation}
\label{decmoy} \Gamma_{m}\left( \overline{Z}_{n} - m \right) = \frac{1}{n}\left( \frac{Z_{1}-m}{\gamma_{1}} - \frac{Z_{n+1} - m}{\gamma_{n}} + \sum_{k=2}^{n} \left( \frac{1}{\gamma_{k}}- \frac{1}{\gamma_{k-1}} \right) \left( Z_{k} - m \right) - \sum_{k=1}^{n} \delta_{k}\right) + \frac{1}{n}\sum_{k=1}^{n} \xi_{k+1}  .
\end{equation}

\subsection{Proof of Section \ref{sectionalmostsure}}

\begin{proof}[Proof of Theorem \ref{theoconsistency}]
Using decomposition (\ref{decxi}) and since $\left( \xi_{n} \right)$ is a sequence of martingale differences adapted to the filtration $\left( \mathcal{F}_{n} \right)$,
\begin{align*}
\mathbb{E}\left[ \left\| Z_{n+1} - m \right\|^{2}|\mathcal{F}_{n} \right] &  = \left\| Z_{n} - m \right\|^{2} - 2\gamma_{n} \left\langle Z_{n} - m , \Phi (Z_{n}) \right\rangle + \gamma_{n}^{2}\left\| \Phi (Z_{n} ) \right\|^{2} + \gamma_{n}^{2}\mathbb{E}\left[ \left\| \xi_{n+1} \right\|^{2} |\mathcal{F}_{n} \right] .
\end{align*}
Moreover, with Assumption \textbf{(A5a)},
\begin{align*}
\mathbb{E}\left[ \left\| \xi_{n+1} \right\|^{2}|\mathcal{F}_{n} \right] & = \mathbb{E}\left[ \left\| U_{n+1} \right\|^{2} |\mathcal{F}_{n} \right] - 2  \left\langle  \mathbb{E}\left[ U_{n+1} |\mathcal{F}_{n} \right] , \Phi (Z_{n} ) \right\rangle +  \left\| \Phi (Z_{n} ) \right\|^{2} \\
& \leq \mathbb{E}\left[ \left( f(X_{n+1},Z_{n} )+C \left\| Z_{n} - m \right\|\right)^{2} |\mathcal{F}_{n} \right] - \left\| \Phi (Z_{n}) \right\|^{2} \\
& \leq 2L_{1} + 2C^{2}\left\| Z_{n} - m \right\|^{2} - \left\| \Phi (Z_{n} ) \right\|^{2} .
\end{align*}
Thus,
\[
\mathbb{E}\left[ \left\| Z_{n+1} - m \right\|^{2} |\mathcal{F}_{n} \right] \leq \left( 1+2C^{2}\gamma_{n}^{2} \right) \left\| Z_{n} - m \right\|^{2} - 2 \gamma_{n} \left\langle \Phi (Z_{n} ) , Z_{n} - m \right\rangle + 2\gamma_{n}^{2}L_{1} .
\]
Since $\left\langle \Phi (Z_{n} ) , Z_{n} - m \right\rangle \geq 0$ and $\sum_{n\geq 1} \gamma_{n}^{2}< + \infty$, Robbins-Siegmund theorem (see Theorem \ref{theors}) ensures that $\left\| Z_{n} - m \right\|$ converges almost surely to a finite random variable and that
\[
\sum_{n\geq 1} \gamma_{n} \left\langle \Phi (Z_{n} ) , Z_{n} - m \right\rangle < +\infty \quad a.s. 
\]
Moreover, since $\left\langle \Phi \left( Z_{n} \right) , Z_{n} - m \right\rangle \geq 0$, by induction, there is a positive constant $M$ such that for all $n \geq 1$,
\begin{align*}
\mathbb{E}\left[ \left\| Z_{n+1} - m \right\|^{2}  \right] &   \leq \left( 1+2C^{2}\gamma_{n}^{2} \right)\mathbb{E}\left[ \left\| Z_{n} - m \right\|^{2} \right] + 2\gamma_{n}^{2}L_{1} \\
& \leq \left(\prod_{k \geq 1} \left( 1+2C^{2}\gamma_{k}^{2} \right)\right) \mathbb{E} \left[\left\| Z_{1} - m \right\|^{2} \right] + 2L_{1}\left(\prod_{k \geq 1} \left( 1+2C^{2}\gamma_{k}^{2} \right)\right)\sum_{k\geq 1}\gamma_{k}^{2} \\
& \leq M.
\end{align*}
Thus, one can conclude the proof in the same way as in the proof of Theorem 3.1 in \cite{HC} for instance. Finally, one can apply Toeplitz's lemma (see \cite{Duf97}, Lemma 2.2.13) to get the strong consistency of the averaged algorithm.

\end{proof}

In order to get the almost sure rates of convergence of the Robbins-Monro algorithm, we now introduce a technical lemma which gives the rate of convergence of the martingale term $\beta_{n-1}M_{n}$ in decomposition (\ref{decbeta}).

\begin{lem}\label{lemmart}
Suppose assumptions \textbf{(A1)} to \textbf{(A3)} and \textbf{(A5$\eta$)} hold with $\eta > \frac{1}{\alpha} -1$. Then,  
\[
\left\| \beta_{n-1}M_{n} \right\|^{2} = O \left( \frac{\ln n}{n^{\alpha}} \right) \quad a.s.
\]
\end{lem}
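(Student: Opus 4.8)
The plan is to exploit that $W_n := \beta_{n-1}M_n$ satisfies a simple linear recursion and then to combine an $L^2$ estimate with a blockwise maximal‑inequality argument. Since every factor $I_H-\gamma_k\Gamma_m$ is a function of $\Gamma_m$, the operators $\beta_{n-1}$ commute with $\Gamma_m$, and from $\beta_n=\beta_{n-1}\left( I_H-\gamma_n\Gamma_m \right)$ together with $M_{n+1}=M_n+\gamma_n\beta_n^{-1}\xi_{n+1}$ one obtains
\[
W_{n+1}=\left( I_H-\gamma_n\Gamma_m \right)W_n+\gamma_n\xi_{n+1},\qquad W_1=0 .
\]
By \textbf{(A2)} the spectrum of $\Gamma_m$ is bounded and by \textbf{(A3)} it is bounded below by $\lambda_{\min}/2$, so for $n$ large $\left\| I_H-\gamma_n\Gamma_m \right\|_{op}\le 1-\gamma_n\lambda_{\min}/2$. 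I would also recall from the proof of Theorem \ref{theoconsistency} that $\left( \xi_n \right)$ is a martingale differences sequence with $\mathbb{E}\left[ \left\| \xi_{n+1} \right\|^2\,|\,\mathcal{F}_n \right]\le 2L_1+2C^2\left\| Z_n-m \right\|^2$ and that $\sup_n\mathbb{E}\left[ \left\| Z_n-m \right\|^2 \right]<\infty$, so that $\sigma^2:=\sup_n\mathbb{E}\left[ \left\| \xi_{n+1} \right\|^2 \right]<\infty$.

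First I would establish the $L^2$ rate. Taking the conditional expectation in the recursion, the cross term vanishes and, using $(1-x)^2\le 1-x$,
\[
\mathbb{E}\left[ \left\| W_{n+1} \right\|^2\,|\,\mathcal{F}_n \right]\le\left( 1-\gamma_n\lambda_{\min}/2 \right)\left\| W_n \right\|^2+\gamma_n^2\,\mathbb{E}\left[ \left\| \xi_{n+1} \right\|^2\,|\,\mathcal{F}_n \right].
\]
Taking expectations and iterating, the bound $\mathbb{E}\left[ \left\| W_n \right\|^2 \right]=O(\gamma_n)=O\left( n^{-\alpha} \right)$ follows from the elementary estimate $\sum_{k=1}^{n-1}\gamma_k^2\prod_{j=k+1}^{n-1}\left( 1-\gamma_j\lambda_{\min}/2 \right)=O(\gamma_n)$, valid for $\gamma_n=c_\gamma n^{-\alpha}$.

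To pass from this to the almost sure rate, I would introduce a subsequence $(n_j)$ for which the increments $\sum_{k=n_j}^{n_{j+1}-1}\gamma_k$ stay bounded away from $0$ and $\infty$; then $n_j\asymp j^{1/(1-\alpha)}$, $\ln n_j\asymp\ln j$, the block contraction $\left\| \beta_{n_{j+1}-1}\beta_{n_j-1}^{-1} \right\|_{op}\le\rho<1$ is geometric, and, since $\alpha>1/2$, one has the useful fact $\sum_j\gamma_{n_j}<\infty$. On a block I would write $W_n=\beta_{n-1}\beta_{n_j-1}^{-1}W_{n_j}+\beta_{n-1}\beta_{n_j-1}^{-1}\widetilde{S}_n$ with $\widetilde{S}_n:=\sum_{k=n_j}^{n-1}\gamma_k\beta_{n_j-1}\beta_k^{-1}\xi_{k+1}$ a genuine martingale whose increment coefficients have norm bounded by a constant over the block; Doob's $L^2$ maximal inequality then gives $\mathbb{E}\left[ \max_{n_j\le n<n_{j+1}}\left\| \widetilde{S}_n \right\|^2 \right]\le C\gamma_{n_j}$. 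Since $\left\| \beta_{n-1}\beta_{n_j-1}^{-1} \right\|_{op}\le 1$, this reduces the whole statement (using $\gamma_n\asymp\gamma_{n_j}$, $\ln n\asymp\ln n_j$ on the block) to the almost sure behaviour of $\left\| W_{n_j} \right\|$ along the subsequence. The sharp logarithmic rate along $(n_j)$ would then be obtained from the martingale almost‑sure‑rate techniques of \cite{CCG2015} and \cite{godichon2015}, applied to the block recursion $W_{n_{j+1}}=D_jW_{n_j}+\eta_{j+1}$, where $\left\| D_j \right\|_{op}\le\rho$ and the block noise $\eta_{j+1}$ is conditionally centred with conditional second moment $O(\gamma_{n_j})$.

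The main obstacle is precisely this last step: producing the factor $(\ln n)^\delta$ for \emph{every} $\delta>0$ while assuming only the second‑order bound \textbf{(A5a)}. A crude operator bound $\left\| \beta_{n-1}M_n \right\|\le\left\| \beta_{n-1} \right\|_{op}\left\| M_n \right\|$ is useless here, because the spectrum of $\Gamma_m$ may be spread out and its Hilbert--Schmidt norm infinite, so the slow contraction of the small‑eigenvalue directions does not compensate the fast dilation of $\beta_k^{-1}$ on the large‑eigenvalue directions; one must keep the operators $\beta_{n-1}\beta_k^{-1}$ coherent throughout. Moreover a single global martingale will not suffice: its conditional quadratic variation grows exponentially in $\sum_k\gamma_k\asymp n^{1-\alpha}$, so a strong law would yield a power of $n^{1-\alpha}$ rather than of $\ln n$. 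It is exactly the passage to a subsequence with bounded $\sum\gamma_k$‑increments that turns the exponential contraction into a geometric one and makes a logarithmic — rather than polynomial — factor appear, and reconciling this with the mere second moments of $\xi$ (where a naive Markov plus Borel--Cantelli estimate over the blocks fails to be summable) is the delicate point that the cited techniques are designed to handle.
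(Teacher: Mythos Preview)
Your proposal is incomplete: you yourself identify the ``delicate point'' and then defer it to ``the martingale almost-sure-rate techniques of \cite{CCG2015} and \cite{godichon2015}'' without carrying it out. Those references do not contain a black-box result that, under only a second-moment bound on the noise, takes the block recursion $W_{n_{j+1}}=D_jW_{n_j}+\eta_{j+1}$ and outputs the $(\ln n)^\delta$ rate for \emph{every} $\delta>0$; what they contain are arguments of the Robbins--Siegmund type, and that is in fact what the paper uses here, directly and without blocks.

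The paper's route is much shorter and avoids precisely the obstacle you flag. Starting from the same recursion $W_{n+1}=(I_H-\gamma_n\Gamma_m)W_n+\gamma_n\xi_{n+1}$, one sets
\[
V_n:=\frac{n^{2\alpha-1}}{(\ln n)^{1+\delta}}\,\|W_n\|^2
\]
and checks that $\mathbb{E}[V_{n+1}\mid\mathcal{F}_n]\le V_n+r_n-c\,\frac{n^{\alpha-1}}{(\ln n)^{1+\delta}}\|W_n\|^2$ with $r_n$ summable (the factor $(1-\lambda_{\min}\gamma_n)^2((n+1)/n)^{2\alpha-1}$ is $1-\text{const}\cdot n^{-\alpha}+o(n^{-\alpha})$, which produces the negative term). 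Robbins--Siegmund then gives simultaneously that $V_n$ converges a.s.\ and that $\sum_n\frac{1}{n\ln n}\bigl(\frac{n^\alpha}{(\ln n)^\delta}\|W_n\|^2\bigr)<\infty$ a.s.; since $\sum_n\frac{1}{n\ln n}=\infty$, this forces the bracketed term to tend to $0$. The whole argument stays at the level of conditional second moments, so \textbf{(A5a)} suffices; there is no need for subsequences, Doob's inequality, or higher moments. The insight you are missing is that rescaling \emph{before} applying the supermartingale theorem converts the contraction directly into the negative drift that produces the logarithmic rate, instead of trying to recover that rate afterwards from an $L^2$ bound plus a maximal argument.
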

This Lemma is a direct application of Theorem 6.1 in \cite{CGBP2020}.

\begin{proof}[Proof of Theorem \ref{theoalmsure}]\textbf{Rate of convergence of the Robbins-Monro algorithm:} Applying decomposition (\ref{decbeta}), as in \cite{pelletier1998almost}, let 
\[
\Delta_{n} = \beta_{n-1}\left( Z_{1} - m \right) - \beta_{n-1}R_{n} =  \left( Z_{n}  -m \right) - \beta_{n-1}M_{n}.
\]
We have
\begin{align*}
\Delta_{n+1} & = Z_{n+1} - m - \beta_{n}M_{n+1} \\
& = \left( I_{H} - \gamma_{n}\Gamma_{m} \right) \left( Z_{n} - m \right) + \gamma_{n}\xi_{n+1} - \gamma_{n}\delta_{n} - \gamma_{n}\xi_{n+1} - \left( I_{H} - \gamma_{n} \Gamma_{m} \right) \beta_{n-1}M_{n} \\
& = \left( I_{H} - \gamma_{n} \Gamma_{m} \right) \Delta_{n} - \gamma_{n} \delta_{n}.
\end{align*}
Thus,  since $\left\| \delta_{n} \right\|\leq C_{m} \left\| Z_{n} - m \right\|^{2}$, for $n$ large enough, one has
\[
\left\| \Delta_{n+1} \right\| \leq  \left( 1- \lambda_{\min} \gamma_{n} \right) \left\| \Delta_{n} \right\| + C_{m} \gamma_{n} \left\| Z_{n} - m \right\|^{2}
\]
We now introduce the sequence of events $E_{n} = \left\lbrace \left\| Z_{n} - m \right\| \leq \frac{\lambda_{\min}}{2C_{m}} \right\rbrace $ and since $Z_{n}$ converges almost surely to $m$, $\mathbf{1}_{E_{n}}$ converges almost surely to $1$. Then, thanks to decomposition \eqref{decbeta},  denoting $n_{0} = \max \left\lbrace  \inf\left\lbrace n , \lambda_{\min}\gamma_{n} < 2 \right\rbrace , \inf \left\lbrace n ,  \lambda_{\max} \gamma_{n} \leq 1 \right\rbrace \right\rbrace $, one has for all $n \geq n_{0}$,
\begin{align*}
\left\| \Delta_{n+1} \right\| & \leq \left( 1- \lambda_{\min} \gamma_{n}\right) \left\| \Delta_{n} \right\| + \frac{\gamma_{n}}{2} \left\| Z_{n} - m \right\| + C_{m}\gamma_{n}\left\| Z_{n} - m \right\|^{2} \mathbf{1}_{E_{n}^{c}} \\
& \leq  \left( 1- \frac{1 }{2} \lambda_{\min} \gamma_{n} \right) \left\| \Delta_{n} \right\| + \frac{\gamma_{n}}{2} \left\| \beta_{n-1}M_{n} + \beta_{n-1}\left( Z_{1} - m \right) \right\| + C_{m}\gamma_{n}\left\| Z_{n} - m \right\|^{2} \mathbf{1}_{E_{n}^{c}}
\end{align*} 
Furthermore, thanks to Lemma \ref{lemmart}, there is a positive finite random variable $M_{\infty}$ such that for all $n$, $\left\| \beta_{n-1}M_{n} + \beta_{n-1}\left( Z_{1} - m \right) \right\| \leq M_{\infty} \frac{\sqrt{\ln n}}{n^{\alpha/2}}$ almost surely and with the help of an induction, it comes
\begin{align*}
\left\| \Delta_{n+1} \right\|&  \leq  \overbrace{\prod_{i=n_{0}}^{n-1} \left( 1- \frac{\lambda_{\min}}{2} \gamma_{i}  \right) \left\| \Delta_{n_{0}} \right\|}^{=: \Delta_{0,n}} + \overbrace{ M_{\infty} \sum_{k=n_{0}}^{n-1}\prod_{i=k+1}^{n-1} \left( 1- \frac{\lambda_{\min}}{2}\gamma_{i} \right) \gamma_{k} \frac{\sqrt{\ln k}}{k^{\alpha /2}}}^{=: \Delta_{1,n}}   \\
& + \underbrace{C_{m}\sum_{k=n_{0}}^{n-1}\prod_{i=k+1}^{n-1} \left( 1- \frac{\lambda_{\min}}{2}\gamma_{i} \right) \gamma_{k}\left\| Z_{k} - m \right\|^{2} \mathbf{1}_{E_{k}^{C}}}_{=: \Delta_{2,n}}
\end{align*} 
One can easily check, with usual calculus, that $\Delta_{0,n}$ converges exponentially fast to $0$ while 
\[
\Delta_{1,n} = O \left( \frac{\sqrt{\ln n}}{n^{\alpha /2}} \right) \quad a.s. 
\]
Finally, one can rewrite $\Delta_{2,n}$ as
\begin{align*}
\Delta_{2,n} = C_{m}\prod_{k=n_{0}}^{n-1}\left( 1- \frac{\lambda_{\min}}{2}\gamma_{i} \right) \sum_{k=n_{0}}^{n-1} \prod_{i=n_{0}}^{k} \left( 1- \frac{\lambda_{\min}}{2}\gamma_{i} \right)^{-1} \gamma_{k}\left\| Z_{k} - m \right\|^{2} \mathbf{1}_{E_{k}^{C}}
\end{align*}
and since $\mathbf{1}_{E_{n}^{C}}$ converges almost surely to $0$, one has 
\[
 \sum_{k=n_{0}}^{+ \infty} \prod_{i=n_{0}}^{k} \left( 1- \frac{\lambda_{\min}}{2}\gamma_{i} \right)^{-1} \gamma_{k}\left\| Z_{k} - m \right\|^{2} \mathbf{1}_{E_{k}^{C}} < + \infty \quad a.s
\] 
and one can so easily check that $\Delta_{2,n}$ converges exponentially fast to $0$, leading to 
\[
\left\| \Delta_{n} \right\| = O \left( \frac{\sqrt{\ln n}}{n^{\alpha /2}} \right) \quad a.s
\]
which concludes the proof.

\medskip

\noindent\textbf{Rate of convergence of the averaged algorithm:} With the help of decomposition (\ref{decmoy}), 
\begin{align*}
\left\| \overline{Z}_{n} - m \right\|^{2} & \leq \frac{5}{\lambda_{\min}^{2}n^{2}} \frac{\left\| Z_{1} - m \right\|^{2}}{\gamma_{1}^{2}} + \frac{5}{\lambda_{\min}^{2}n^{2}} \frac{ \left\| Z_{n+1} - m \right\|^{2} }{\gamma_{n}^{2}} + \frac{5}{\lambda_{\min}^{2}n^{2}}  \left\| \sum_{k=1}^{n} \delta_{k} \right\|^{2}  \\
& + \frac{5}{\lambda_{\min}^{2}n^{2}}  \left\| \sum_{k=2}^{n} \left( Z_{k} - m \right) \left( \frac{1}{\gamma_{k}} - \frac{1}{\gamma_{k-1}} \right) \right\|^{2}  + \frac{5}{\lambda_{\min}^{2}n^{2}}  \left\| \sum_{k=1}^{n} \xi_{k+1} \right\|^{2} .
\end{align*}
As in \cite{godichon2015}, thanks to the almost sure rate of convergence of the Robbins-Monro algorithm, one can check that

\begin{align*}
 \frac{1}{n^{2}}\frac{\left\| Z_{1} - m \right\|}{\gamma_{1}} & = o \left( \frac{1}{n} \right) \quad a.s , \\
 \frac{1}{n^{2}}\frac{\left\| Z_{n+1} - m \right\|^{2}}{\gamma_{n}^{2}} &  = o \left( \frac{1}{n} \right) \quad a.s , \\
 \frac{1}{n^{2}}\left\| \sum_{k=2}^{n} \left( Z_{k} - m \right) \left( \frac{1}{\gamma_{k}} - \frac{1}{\gamma_{k-1}} \right) \right\|^{2} & = o \left( \frac{1}{n} \right) \quad a.s, \\
 \frac{1}{n^{2}} \left\| \sum_{k=1}^{n} \delta_{k} \right\|^{2} & = o \left( \frac{1}{n} \right) \quad a.s.
\end{align*}
Let $\delta > 0 $ and $M_{n}' :=  \frac{\sqrt{n}}{\sqrt{(\ln n)^{1+\delta}}}\left\| \frac{1}{n}\sum_{k=1}^{n} \xi_{k+1} \right\|  = \frac{1}{\sqrt{n (\ln n)^{1+\delta}}}\left\| \sum_{k=1}^{n} \xi_{k+1} \right\|$. Since $\left( \xi_{n} \right)$ is a martingale differences sequence adapted to the filtration $\left( \mathcal{F}_{n} \right)$, and since 
\begin{align*}
\mathbb{E}\left[ \left\| \xi_{n+2} \right\|^{2} |\mathcal{F}_{n+1} \right] & \leq 2\mathbb{E}\left[ f(X_{n+2},Z_{n+1} )^{2}|\mathcal{F}_{n+1} \right] + 2C^{2}\left\| Z_{n+1} - m \right\|^{2} \\
& \leq 2L_{1} + 2C^{2}\left\| Z_{n+1} - m \right\|^{2},
\end{align*}
we have
\begin{align*}
\mathbb{E}\left[ M_{n+1}'^{2} |\mathcal{F}_{n+1} \right] & = \frac{n(\ln n)^{1+\delta}}{(n+1)(\ln (n+1))^{1+\delta}}M_{n}'^{2} + \frac{1}{(n+1)(\ln (n+1))^{1+\delta}}\mathbb{E}\left[ \left\| \xi_{n+2} \right\|^{2} |\mathcal{F}_{n+1} \right] \\
& \leq M_{n}'^{2} + \frac{1}{(n+1)(\ln (n+1))^{1+\delta}}\left( 2L_{1} +2C^{2}\left\| Z_{n+1} - m \right\|^{2} \right) .
\end{align*}
Since $\left\| Z_{n+1} - m \right\| $ converges almost surely to $0$, applying Robbins-Siegmund theorem (see Theorem \ref{theors}), $M_{n}'^{2}$ converges almost surely to a finite random variable, which concludes the proof.
\end{proof}

\subsection{Proof of Theorem \ref{theol2}}
In order to prove Theorem \ref{theol2} with the help of a strong induction on $p$, we have to introduce some technical lemmas (the proofs are given in Appendix). Note that these lemmas remain true replacing assumptions \textbf{(A3)} and/or \textbf{(A5b)} by \textbf{(A3')} and/or \textbf{(A5b')} but the proofs are only given for the first assumptions.

\medskip
 
The first lemma gives a bound of the $2p$-th moment when inequality (\ref{ineqrecth}) is verified for all integer from $0$ to $p-1$.
\begin{lem}\label{majznp}
Assume \textbf{(A1)} to \textbf{(A5b)} hold. Let $p$ be a positive integer, and suppose that for all $k \leq p-1$, there is a positive constant $K_{k}$ such that for all $n \geq 1$,
\begin{equation}\label{lemrec}
\mathbb{E}\left[ \left\| Z_{n} - m \right\|^{2k} \right] \leq \frac{K_{k}}{n^{k\alpha}} .
\end{equation}
Then, there are positive constants $c_{0},C_{1},C_{2}$ and a rank $n_{\alpha}$ such that for all $n \geq n_{\alpha}$,
\[
\mathbb{E}\left[ \left\| Z_{n+1} - m \right\|^{2p} \right] \leq \left( 1- c_{0}\gamma_{n} \right) \mathbb{E}\left[ \left\| Z_{n} - m \right\|^{2p} \right] + \frac{C_{1}}{n^{(p+1)\alpha}} + C_{2}\gamma_{n} \mathbb{E}\left[ \left\| Z_{n} - m \right\|^{2p+2} \right] .
\]
\end{lem}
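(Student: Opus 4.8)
The plan is to expand the $2p$-th power through the recursion, condition on $\mathcal{F}_n$, take full expectation, and reserve the single genuinely problematic contribution for the right-hand side as the term $C_2\gamma_n\mathbb{E}[\|Z_n-m\|^{2p+2}]$. Writing $\left\| Z_{n+1}-m\right\|^{2}=\left\| Z_{n}-m\right\|^{2}+Q_{n+1}$ with $Q_{n+1}:=-2\gamma_{n}\left\langle Z_{n}-m,U_{n+1}\right\rangle+\gamma_{n}^{2}\left\| U_{n+1}\right\|^{2}$, the binomial formula gives
\[
\mathbb{E}\left[\left\| Z_{n+1}-m\right\|^{2p}|\mathcal{F}_{n}\right]=\sum_{j=0}^{p}\binom{p}{j}\left\| Z_{n}-m\right\|^{2(p-j)}\mathbb{E}\left[Q_{n+1}^{j}|\mathcal{F}_{n}\right].
\]
First I would record that each moment $\mathbb{E}[\left\| Z_{n}-m\right\|^{2p}]$ is finite for fixed $n$, by a routine induction on $n$ using that $Z_{1}$ is bounded and that $\left\| U_{n+1}\right\|\leq f(X_{n+1},Z_{n})+C\left\| Z_{n}-m\right\|$ with $f$ admitting all moments by \textbf{(A5b)}; this legitimizes all conditional manipulations. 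Note $\mathbb{E}[Q_{n+1}|\mathcal{F}_{n}]=-2\gamma_{n}\left\langle Z_{n}-m,\Phi(Z_{n})\right\rangle+\gamma_{n}^{2}\mathbb{E}[\left\| U_{n+1}\right\|^{2}|\mathcal{F}_{n}]$.

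The term $j=1$ carries the main structure. Its crucial input is the localized strong convexity of Proposition \ref{strongconv}: on $\mathcal{B}(m,\epsilon)$ the constant $c_{A}$ equals $\frac{\lambda_{\min}}{2}$, so globally $\left\langle\Phi(Z_{n}),Z_{n}-m\right\rangle\geq\frac{\lambda_{\min}}{2}\left\| Z_{n}-m\right\|^{2}\mathbb{1}_{\{\left\| Z_{n}-m\right\|\leq\epsilon\}}$. Rewriting the indicator as $1-\mathbb{1}_{\{\left\| Z_{n}-m\right\|>\epsilon\}}$ and using $\left\| Z_{n}-m\right\|^{2}\mathbb{1}_{\{\left\| Z_{n}-m\right\|>\epsilon\}}\leq\epsilon^{-2}\left\| Z_{n}-m\right\|^{4}$, the multiplication by $\left\| Z_{n}-m\right\|^{2(p-1)}$ is precisely the mechanism that generates the term $C_{2}\gamma_{n}\left\| Z_{n}-m\right\|^{2p+2}$, while the leading drift produces $-p\lambda_{\min}\gamma_{n}\left\| Z_{n}-m\right\|^{2p}$. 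The variance piece, controlled by $\mathbb{E}[\left\| U_{n+1}\right\|^{2}|\mathcal{F}_{n}]\leq 2L_{1}+2C^{2}\left\| Z_{n}-m\right\|^{2}$ from \textbf{(A5a)}, contributes a harmless $O(\gamma_{n}^{2}\left\| Z_{n}-m\right\|^{2p})$ term and a term $\gamma_{n}^{2}\left\| Z_{n}-m\right\|^{2p-2}$ that, after the induction hypothesis $\mathbb{E}[\left\| Z_{n}-m\right\|^{2(p-1)}]\leq K_{p-1}n^{-(p-1)\alpha}$ and $\gamma_{n}^{2}\sim n^{-2\alpha}$, is $O(n^{-(p+1)\alpha})$.

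For $j\geq 2$ I would use $\left| Q_{n+1}\right|\leq 2\gamma_{n}\left\| Z_{n}-m\right\|\left\| U_{n+1}\right\|+\gamma_{n}^{2}\left\| U_{n+1}\right\|^{2}$ together with $\mathbb{E}[\left\| U_{n+1}\right\|^{j}|\mathcal{F}_{n}]\leq c(L+\left\| Z_{n}-m\right\|^{j})$ and $\mathbb{E}[\left\| U_{n+1}\right\|^{2j}|\mathcal{F}_{n}]\leq c(L+\left\| Z_{n}-m\right\|^{2j})$, both from \textbf{(A5b)} (which is why all moments up to order $2p$ are needed). Each resulting monomial is either $O(\gamma_{n}^{2}\left\| Z_{n}-m\right\|^{2p})$, absorbed into the contraction factor, or of the form $\gamma_{n}^{j}\left\| Z_{n}-m\right\|^{2p-j}$ or $\gamma_{n}^{2j}\left\| Z_{n}-m\right\|^{2(p-j)}$. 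Taking full expectation, invoking the induction hypothesis, and using Hölder for the fractional exponent $2p-j\leq 2(p-1)$, the decisive computation is that these have respective orders $n^{-\alpha(p+j/2)}$ and $n^{-\alpha(p+j)}$; since $j\geq 2$, both are at most $n^{-(p+1)\alpha}$ and so feed into $C_{1}n^{-(p+1)\alpha}$.

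Finally I would collect everything: the coefficient of $\left\| Z_{n}-m\right\|^{2p}$ is $1-p\lambda_{\min}\gamma_{n}+D\gamma_{n}^{2}$ for some $D=D(p,L_{\bullet},C)$, which for $n$ beyond a rank $n_{\alpha}$ where $D\gamma_{n}\leq\frac{p\lambda_{\min}}{2}$ is at most $1-c_{0}\gamma_{n}$ with $c_{0}:=\frac{p\lambda_{\min}}{2}$; the isolated higher-order piece supplies $C_{2}\gamma_{n}\mathbb{E}[\left\| Z_{n}-m\right\|^{2p+2}]$; and all remaining terms sum to $C_{1}n^{-(p+1)\alpha}$, which is the claimed inequality. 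I expect the main obstacle to be not any single estimate but the bookkeeping: arranging the strong-convexity localization so that the complement of the ball produces \emph{exactly} the $\left\| Z_{n}-m\right\|^{2p+2}$ term rather than an uncontrolled contribution, and verifying the exponent identity $p+\tfrac{j}{2}\geq p+1$ that renders every $j\geq 2$ cross term negligible.
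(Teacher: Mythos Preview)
Your argument is correct, but it follows a different route from the paper's. The paper does not use the inner-product form of local strong convexity at all in this lemma; instead it starts from the linearized decomposition $Z_{n+1}-m=(I_H-\gamma_n\Gamma_m)(Z_n-m)+\gamma_n\xi_{n+1}-\gamma_n\delta_n$, so the contraction factor comes from the spectral bound $\|I_H-\gamma_n\Gamma_m\|_{op}\le 1-c'\gamma_n$ (via \textbf{(A2)--(A3)}), and the dangerous term $C_2\gamma_n\mathbb{E}[\|Z_n-m\|^{2p+2}]$ arises from the Taylor remainder $\|\delta_n\|\le C_m\|Z_n-m\|^2$ (Proposition~\ref{propdelta}, hence \textbf{(A4)}) through a Young inequality on $2\gamma_n\|Z_n-m\|\|\delta_n\|$. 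The paper then multiplies the resulting quadratic bound by $\|Z_{n+1}-m\|^{2p-2}$ and expands the latter via the $(V_n,U_{n+1})$ decomposition, which is considerably more laborious than your direct binomial expansion of $(\|Z_n-m\|^2+Q_{n+1})^p$. Your mechanism for producing the $\|Z_n-m\|^{2p+2}$ term --- localizing Proposition~\ref{strongconv} to $\mathcal{B}(m,\epsilon)$ and trading the complementary indicator for $\epsilon^{-2}\|Z_n-m\|^{4}$ --- is cleaner and, notably, does not actually invoke \textbf{(A4)} at this step. The trade-off is that the paper's linearization is the same one reused later for the averaged algorithm (decomposition~(\ref{decmoy})), so its approach keeps the whole proof in a single framework, whereas your argument is more self-contained but less aligned with the rest of the paper.
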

Then, the second lemma gives an upper bound of the $(2p+2)$-th moment when inequality (\ref{ineqrecth}) is verified for all integer from $0$ to $p-1$.
\begin{lem}\label{majznp2}
Assume \textbf{(A1)} to \textbf{(A3)} and \textbf{(A5b)} hold. Let $p$ be a positive integer, and suppose that for all $k \leq p-1$, there is a positive constant $K_{k}$ such that for all $n \geq 1$,
\[
\mathbb{E}\left[ \left\| Z_{n} - m \right\|^{2k} \right] \leq \frac{K_{k}}{n^{k\alpha}} .
\]
Then, there are positive constants $C_{1}',C_{2}'$ and a rank $n_{\alpha}$ such that for all $n \geq n_{\alpha}$,
\[
\mathbb{E}\left[ \left\| Z_{n+1} - m \right\|^{2p+2} \right] \leq \left( 1- \frac{2}{n} \right)^{p+1} \mathbb{E}\left[ \left\| Z_{n} - m \right\|^{2p+2} \right] + \frac{C_{1}'}{n^{(p+2)\alpha}} + C_{2}'\gamma_{n}^{2} \mathbb{E}\left[ \left\| Z_{n} - m \right\|^{2p} \right] .
\]
\end{lem}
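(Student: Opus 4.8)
The plan is to start from the Robbins--Monro recursion (\ref{fc}) and raise the basic second-moment identity to the power $p+1$. Writing $W:=Z_n-m$ and conditioning on $\mathcal{F}_n$, decomposition (\ref{decxi}) gives $\left\| Z_{n+1}-m\right\|^2=a+b$ where $a:=\left\| W-\gamma_n\Phi(Z_n)\right\|^2$ is $\mathcal{F}_n$-measurable and $b:=2\gamma_n\left\langle W-\gamma_n\Phi(Z_n),\xi_{n+1}\right\rangle+\gamma_n^2\left\|\xi_{n+1}\right\|^2$ collects the fluctuations, with $\mathbb{E}\left[\left\langle W-\gamma_n\Phi(Z_n),\xi_{n+1}\right\rangle|\mathcal{F}_n\right]=0$ since $(\xi_n)$ is a martingale difference. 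First I would expand by the binomial formula, $\left\| Z_{n+1}-m\right\|^{2p+2}=\sum_{j=0}^{p+1}\binom{p+1}{j}a^{p+1-j}b^{j}$, and take the conditional expectation: the term $j=0$ produces the deterministic ``drift'' $a^{p+1}$, the linear part of the $j=1$ term vanishes, and every remaining term carries at least one factor $\gamma_n^2$.

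For the drift $a^{p+1}$ I would use the local strong convexity of Proposition~\ref{strongconv}. Fixing a ball radius and writing $a=\left\| W\right\|^2-2\gamma_n\left\langle W,\Phi(Z_n)\right\rangle+\gamma_n^2\left\|\Phi(Z_n)\right\|^2$, on $\mathcal{B}(m,A)$ the bound $\left\langle\Phi(Z_n),W\right\rangle\ge c_A\left\| W\right\|^2$ together with the operator bound of \textbf{(A2)} yields $a\le\left(1-2c_A\gamma_n+C\gamma_n^2\right)\left\| W\right\|^2$, hence $a^{p+1}\le(1-c_A\gamma_n)^{p+1}\left\| W\right\|^{2p+2}$ for $n$ large. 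Since $\gamma_n=c_\gamma n^{-\alpha}$ with $\alpha<1$ we have $n\gamma_n\to\infty$, so $c_A\gamma_n\ge 2/n$ eventually and the genuine contraction can be \emph{weakened} to the stated factor, $(1-c_A\gamma_n)^{p+1}\le(1-2/n)^{p+1}$. This weakening is deliberate: it is also what lets me absorb the self-interaction term $\gamma_n^2\left\| W\right\|^{2p+2}$ coming from the noise (see below) into the contraction, because $\gamma_n^2=o(\gamma_n)$ gives room to spare.

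The fluctuation terms I would control with the moment assumption \textbf{(A5b)}. Using $\left\|\xi_{n+1}\right\|\le\left\|\Phi(Z_n)\right\|+f(X_{n+1},Z_n)+C\left\| W\right\|$ one gets $\mathbb{E}\left[\left\|\xi_{n+1}\right\|^{2j}|\mathcal{F}_n\right]\le C\left(L_j+\left\| W\right\|^{2j}\right)$. In the $j=1$ term, namely $(p+1)a^{p}\gamma_n^2\mathbb{E}\left[\left\|\xi_{n+1}\right\|^2|\mathcal{F}_n\right]$, the part proportional to $L_1$ produces, after taking the full expectation, exactly $C_2'\gamma_n^2\mathbb{E}\left[\left\| Z_n-m\right\|^{2p}\right]$, while the part proportional to $\left\| W\right\|^2$ gives the absorbable $\gamma_n^2\mathbb{E}\left[\left\| Z_n-m\right\|^{2p+2}\right]$. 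Every term with $j\ge2$ carries a factor $\gamma_n^{2j}$ and, after using $a\le C\left\| W\right\|^2$ on the ball and the split above, reduces to $\gamma_n^{2j}\mathbb{E}\left[\left\| Z_n-m\right\|^{2(p+1-j)}\right]$ (and lower). For $2\le j\le p+1$ the exponent $p+1-j$ is at most $p-1$, so the induction hypothesis applies and each contribution is $O\left(\gamma_n^{2j}n^{-(p+1-j)\alpha}\right)=O\left(n^{-(p+1+j)\alpha}\right)=O\left(n^{-(p+2)\alpha}\right)$; collecting them yields the term $C_1'n^{-(p+2)\alpha}$.

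The hard part will be the region where $Z_n$ is far from $m$: Proposition~\ref{strongconv} only gives $c_A\to 0$ as the radius $A\to\infty$, so the clean contraction above is valid on a fixed ball but degrades on its complement, and the same caveat affects the crude bound $a\le C\left\| W\right\|^2$ used for the noise. On $\left\{\left\| Z_n-m\right\|>A\right\}$ I would retain the residual linear drift furnished by the global inequality of Proposition~\ref{strongconv} and control the leftover through the smallness of $\mathbb{P}\left(\left\| Z_n-m\right\|>A\right)$, estimated by Markov's inequality from the induction hypothesis on the lower moments together with the a priori boundedness of the moments established as in the proof of Theorem~\ref{theoconsistency}. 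Choosing $A$ large and $n\ge n_\alpha$ renders this off-ball correction negligible with respect to $(1-2/n)^{p+1}$ and the two error terms, which closes the estimate.
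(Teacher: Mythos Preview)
Your overall architecture---expand $\left\|Z_{n+1}-m\right\|^{2p+2}$ around $V_n:=Z_n-m-\gamma_n\Phi(Z_n)$, kill the linear martingale term, feed the $j\ge2$ pieces into the induction hypothesis, and isolate the drift $\mathbb{E}\left[\left\|V_n\right\|^{2p+2}\right]$---is exactly what the paper does. The gap is in the last paragraph, where you localise on a \emph{fixed} ball $\mathcal{B}(m,A)$ and claim the complement can be made negligible via Markov's inequality using only the induction hypothesis (moments of order $\le 2(p-1)$) and the second-moment bound from Theorem~\ref{theoconsistency}.

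That does not close. With a fixed radius $A$, the best decay Markov gives for the tail probability is $\mathbb{P}\left(\left\|Z_n-m\right\|>A\right)\le K_{p-1}A^{-2(p-1)}n^{-(p-1)\alpha}$; after H\"older against $\left\|Z_n-m\right\|^{2p+2}$ (whose moments you have not yet controlled at order $2p+2$, let alone higher), the off-ball contribution to $\mathbb{E}\left[\left\|V_n\right\|^{2p+2}\right]$ is at best of order $n^{-(p-1)\alpha}$. This is strictly larger than both admissible remainders $n^{-(p+2)\alpha}$ and $\gamma_n^2\mathbb{E}\left[\left\|Z_n-m\right\|^{2p}\right]$, and it cannot be absorbed into the slack between $(1-c_A\gamma_n)^{p+1}$ and $(1-2/n)^{p+1}$ either, since that slack is proportional to $\gamma_n\mathbb{E}\left[\left\|Z_n-m\right\|^{2p+2}\right]$, a quantity for which you have no lower bound. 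Taking $A$ ``large'' does not help: it only changes constants, not the exponent in $n$.

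The paper repairs this with two coupled ingredients. First, the radius must \emph{grow}: one localises on $\left\{\left\|Z_n-m\right\|\le c\,n^{1-\alpha}\right\}$, which by Proposition~\ref{strongconv} still yields $\left\langle\Phi(Z_n),Z_n-m\right\rangle\ge\frac{4}{c_\gamma n^{1-\alpha}}\left\|Z_n-m\right\|^2$ and hence a contraction $(1-3/n)$ on $\left\|V_n\right\|^2$. Second, one needs Lemma~\ref{majzn} (uniform boundedness of \emph{all} moments, $\mathbb{E}\left[\left\|Z_n-m\right\|^{2q}\right]\le M_q$ for every $q$), not merely the second-moment bound of Theorem~\ref{theoconsistency}. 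With both in hand, Cauchy--Schwarz and Markov at an arbitrarily high order $q$ give
\[
\mathbb{E}\left[\left\|Z_n-m\right\|^{2p+2}\mathbb{1}_{\left\{\left\|Z_n-m\right\|>c\,n^{1-\alpha}\right\}}\right]\le\sqrt{M_{2p+2}}\,\sqrt{M_q}\,c^{-q}n^{-q(1-\alpha)},
\]
and choosing $q\ge(p+2)\alpha/(1-\alpha)$ pushes this below $n^{-(p+2)\alpha}$. Neither the growing radius nor Lemma~\ref{majzn} can be dispensed with: a fixed $A$ gives no decay in $n$ from the bounded moments, and the induction hypothesis alone caps the available exponent at $(p-1)\alpha$.
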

Finally, the last lemma enables us to give a bound of the probability for the Robbins-Monro algorithm to go far away from $m$, which is crucial in order to prove Lemma \ref{majznp2}.  
\begin{lem}\label{majzn}
Assume \textbf{(A1)} to \textbf{(A3)} and \textbf{(A5b)} hold. Then, for all integer $p \geq 1$, there is a positive constant $M_{p}$ such that for all $n \geq 1$,
\[
\mathbb{E}\left[ \left\| Z_{n} - m \right\|^{2p} \right] \leq M_{p} .
\]
\end{lem}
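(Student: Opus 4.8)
The plan is to prove the bound by strong induction on $p$, by running a scalar recursion on $u_n := \mathbb{E}\left[ \|Z_n - m\|^{2p} \right]$ obtained from the square of the algorithm. Writing $V_n := \|Z_n - m\|$, decomposition (\ref{fc}) gives the almost sure identity $V_{n+1}^2 = V_n^2 + a_n$ with $a_n := -2\gamma_n\langle Z_n - m, U_{n+1}\rangle + \gamma_n^2\|U_{n+1}\|^2$. Raising to the power $p$ and expanding by the binomial formula, $V_{n+1}^{2p} = \sum_{j=0}^p \binom{p}{j} V_n^{2(p-j)} a_n^j$, I would take the conditional expectation given $\mathcal{F}_n$ and track the order in $\gamma_n$ of each resulting term.

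The crucial observation is that there is exactly one term of order $\gamma_n$: it comes from $j=1$ and equals $-2p\gamma_n V_n^{2p-2}\langle Z_n - m, \Phi(Z_n)\rangle$, since $\mathbb{E}\left[ a_n \mid \mathcal{F}_n \right] = -2\gamma_n\langle Z_n - m, \Phi(Z_n)\rangle + \gamma_n^2\mathbb{E}\left[ \|U_{n+1}\|^2 \mid \mathcal{F}_n \right]$ using $\mathbb{E}\left[ U_{n+1} \mid \mathcal{F}_n \right] = \Phi(Z_n)$. Since $G$ is convex with $\Phi(m)=0$, we have $\langle Z_n - m, \Phi(Z_n)\rangle \geq 0$ (this is already contained in Proposition \ref{strongconv}), so this term is nonpositive and can simply be discarded in an upper bound; note that, contrary to the rate statements, the \emph{local strong convexity} of $G$ is not needed here, only its convexity. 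Every remaining contribution carries at least a factor $\gamma_n^2$: the second part of the $j=1$ term, and for $j\geq 2$ the lowest power of $\gamma_n$ appearing in $a_n^j$ is $\gamma_n^j$. Using Assumption \textbf{(A5b)} together with $\|U_{n+1}\| \leq f(X_{n+1},Z_n) + C V_n$, each conditional moment $\mathbb{E}\left[ \|U_{n+1}\|^{2q} \mid \mathcal{F}_n \right]$ is bounded by a constant times $1 + V_n^{2q}$ (odd powers being handled by Cauchy--Schwarz), so after collecting terms there is a rank $n_0$ and a polynomial $Q$ of degree $2p$ with constant coefficients such that, for all $n \geq n_0$, $\mathbb{E}\left[ V_{n+1}^{2p} \mid \mathcal{F}_n \right] \leq V_n^{2p} + \gamma_n^2 Q(V_n)$.

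Taking the full expectation, the leading coefficient of $Q$ produces a term $c_p u_n$, while the strictly lower degree monomials produce moments $\mathbb{E}\left[ V_n^{2k} \right]$ with $k<p$; by the induction hypothesis these are all uniformly bounded, whence $\mathbb{E}\left[ Q(V_n) \right] \leq c_p u_n + D_p$ for positive constants $c_p, D_p$. This yields the recursion $u_{n+1} \leq (1 + c_p\gamma_n^2) u_n + D_p\gamma_n^2$ for $n \geq n_0$. Since $\sum_n \gamma_n^2 < \infty$, the product $\prod_{n\geq n_0}(1 + c_p\gamma_n^2)$ converges, and unrolling bounds $u_n$ uniformly by $\left(\prod_{k\geq n_0}(1+c_p\gamma_k^2)\right)\bigl(u_{n_0} + D_p\sum_{k\geq n_0}\gamma_k^2\bigr)$; the finitely many initial terms $\mathbb{E}\left[ V_n^{2p} \right]$ for $n<n_0$ are finite because $Z_1$ is bounded and $f$ admits moments of all orders, which produces the desired constant $M_p$. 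The base case $p=1$ follows from the same recursion (the only lower moment is then $\mathbb{E}\left[ V_n^0 \right]=1$), or directly from the bound $\mathbb{E}\left[ \|Z_n-m\|^2 \right] \leq M$ already established in the proof of Theorem \ref{theoconsistency}.

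I expect the main obstacle to be the bookkeeping in the passage to $\mathbb{E}\left[ V_{n+1}^{2p} \mid \mathcal{F}_n \right] \leq V_n^{2p} + \gamma_n^2 Q(V_n)$: one must verify that, apart from the single nonpositive drift term, every monomial arising from expanding $a_n^j$ (including the mixed powers $\gamma_n^{j+\ell}\|U_{n+1}\|^{j+\ell}$) carries a factor $\gamma_n^2$ once $n$ is large enough that $\gamma_n \leq 1$, and that the associated conditional moments of $\|U_{n+1}\|$ are dominated, via \textbf{(A5b)}, by an explicit polynomial in $V_n$ of degree at most $2p$. Everything downstream is a routine application of the standard product/summation argument for nonnegative recursive sequences with summable $\gamma_n^2$.
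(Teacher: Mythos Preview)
Your proposal is correct and follows essentially the same route as the paper: strong induction on $p$, drop the single nonpositive order-$\gamma_n$ drift term coming from $\langle Z_n-m,\Phi(Z_n)\rangle\ge 0$, bound all remaining contributions by $\gamma_n^2$ times a degree-$2p$ polynomial in $\|Z_n-m\|$ using \textbf{(A5b)}, invoke the induction hypothesis on the lower-order moments, and close via the recursion $u_{n+1}\le (1+c_p\gamma_n^2)u_n+D_p\gamma_n^2$. The only cosmetic difference is that the paper first splits $U_{n+1}=\Phi(Z_n)-\xi_{n+1}$ and expands around $Z_n-m-\gamma_n\Phi(Z_n)$, whereas you keep $U_{n+1}$ intact and isolate the drift after conditioning; your bookkeeping is in fact a bit leaner.
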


\begin{proof}[Proof of Theorem \ref{theol2}]
As in \cite{godichon2015}, we will prove with the help of a strong induction that for all integer $p \geq 1$, and for all $\beta \in \left(\alpha , \frac{p+2}{p}\alpha - \frac{1}{p} \right)$, there are positive constants $K_{p},C_{\beta , p}$ such that for all $n \geq 1$,
\begin{align*}
\mathbb{E}\left[ \left\| Z_{n} - m \right\|^{2p} \right]  & \leq \frac{K_{p}}{n^{p\alpha}} , \\
\mathbb{E}\left[ \left\| Z_{n} - m \right\|^{2p+2} \right] & \leq \frac{C_{\beta , p}}{n^{\beta p}} .
\end{align*}  
Applying Lemma \ref{majzn}, Lemma \ref{majznp} and Lemma \ref{majznp2}, as soon as the initialization is satisfied, the proof is strictly analogous to the proof of Theorem 4.1 in \cite{godichon2015}. Thus, we will just prove that for $p=1$ and for all $\beta \in \left(\alpha , 3 \alpha -1 \right)$, there are positive constants $K_{1}',C_{\beta , 1}'$ such that for all $n \geq 1$,
\begin{align*}
& \mathbb{E}\left[ \left\| Z_{n} - m \right\|^{2}\right] \leq \frac{K_{1}'}{n^{\alpha}} , \\
& \mathbb{E}\left[ \left\| Z_{n} - m \right\|^{4} \right] \leq \frac{C_{\beta ,1}'}{n^{\beta}}.
\end{align*} 
We now split the end of the proof into two steps.
\bigskip

\textbf{Step 1: Calibration of the constants.} In order to simplify the demonstration thereafter, we now introduce some notations. Let $K_{1}',C_{\beta ,1}'$ be positive constants such that $K_{1}' \geq 2^{1+\alpha}C_{1}c_{0}^{-1}c_{\gamma}^{-1}$, ($c_{0},C_{1}$ are defined in Lemma~ \ref{majznp}), and $2K_{1}' \geq C_{\beta ,1}' \geq K_{1}' \geq 1$. By definition of $\beta$, there is a rank $n_{\beta} \geq n_{\alpha}$ ($n_{\alpha}$ is defined in Lemma \ref{majznp} and in Lemma \ref{majznp2}) such that for all $n \geq n_{\beta}$, 
\begin{align*}
\left( 1-c_{0}\gamma_{n} \right) \left( \frac{n+1}{n} \right)^{\alpha} + \frac{1}{2}c_{0}\gamma_{n} + \frac{2^{\alpha + \beta +1}c_{\gamma}C_{2}}{(n+1)^{\beta}} \leq 1, \\
\left( 1- \frac{2}{n} \right)^{2} \left( \frac{n+1}{n}\right)^{\beta} + \left( C_{1}' + C_{2}' c_{\gamma}^{2} \right) 2 ^{3\alpha} \frac{1}{(n+1)^{3\alpha - \beta}} \leq 1 ,
\end{align*}
with $C_{2}$ defined in Lemma \ref{majznp} and $C_{1}',C_{2}'$ defined in Lemma \ref{majznp2}. The rank $n_{\beta}$ exists because since $\beta > \alpha $,
\begin{align*}
\left( 1-c_{0}\gamma_{n} \right) \left( \frac{n+1}{n} \right)^{\alpha} + \frac{1}{2}c_{0}\gamma_{n} + \frac{2^{\alpha + \beta +1}c_{\gamma}C_{2}}{(n+1)^{\beta}} & = 1 - c_{0}\gamma_{n} +  \frac{\alpha}{n} + \frac{1}{2}c_{0}\gamma_{n} + O \left( \frac{1}{n^{\beta}}\right) \\
& = 1- \frac{1}{2}c_{0}\gamma_{n} + o \left( \frac{1}{n^{\alpha}} \right) .
\end{align*}
Moreover, since $\beta < 3 \alpha -1$, we have $\beta < 2$, and
\begin{align*}
\left( 1- \frac{2}{n}\right)^{2} \left( \frac{n+1}{n} \right)^{\beta}   + \left( C_{1}' + C_{2}'c_{\gamma}^{2}\right)2^{3\alpha}\frac{1}{(n+1)^{3\alpha - \beta}} & = 1- (4 - 2\beta) \frac{1}{n} + o \left( \frac{1}{n} \right) + O \left( \frac{1}{n^{3\alpha - \beta}} \right) \\
& = 1- (4- 2\beta) \frac{1}{n} + o \left( \frac{1}{n} \right) .
\end{align*}

\textbf{Step 2: The induction on $n$.} Let us take $K_{1}' \geq   \max_{1\leq k \leq n_{\beta}}\left\lbrace k^{\alpha}  \mathbb{E}\left[ \left\| Z_{k} - m \right\|^{2} \right] \right\rbrace $  and \\$C_{\beta , 1}' \geq \max_{1\leq k \leq n_{\beta}}\left\lbrace k^{\beta}\mathbb{E}\left[ \left\| Z_{k} - m \right\|^{4} \right]\right\rbrace$. We now prove by induction that for all $n \geq n_{\beta}$, 
\begin{align*}
& \mathbb{E}\left[ \left\| Z_{n} - m \right\|^{2} \right] \leq \frac{K_{1}'}{n^{\alpha}} , \\
& \mathbb{E}\left[ \left\| Z_{n} - m \right\|^{4} \right] \leq \frac{C_{\beta, 1}'}{n^{\beta}} .
\end{align*} 
Applying Lemma \ref{majznp} and by induction, since $2K_{1}' \geq C_{\beta ,1}' \geq K_{1}' \geq 1$,
\begin{align*}
\mathbb{E}\left[ \left\| Z_{n+1} - m \right\|^{2} \right] & \leq \left( 1-c_{0}\gamma_{n} \right) \mathbb{E}\left[ \left\| Z_{n} - m \right\|^{2p} \right] + \frac{C_{1}}{n^{2\alpha}} + C_{2}\gamma_{n} \mathbb{E}\left[ \left\| Z_{n} - m \right\|^{2p+2} \right] \\
& \leq \left( 1-c_{0}\gamma_{n} \right) \frac{K_{1}'}{n^{\alpha}} + \frac{C_{1}}{n^{2\alpha}} + 2C_{2}\gamma_{n} \frac{K_{1}'}{n^{\beta}} .
\end{align*}
Factorizing by $\frac{K_{1}'}{(n+1)^{\alpha}}$,
\begin{align*}
\mathbb{E}\left[ \left\| Z_{n+1} - m \right\|^{2} \right] &  \leq \left( 1-c_{0}\gamma_{n} \right) \left( \frac{n+1}{n}\right)^{\alpha}\frac{K_{1}'}{(n+1)^{\alpha}} + \frac{2^{\alpha}C_{1}c_{\gamma}^{-1}\gamma_{n}}{(n+1)^{\alpha}} + \frac{2^{\alpha + \beta +1}c_{\gamma}C_{2}}{(n+1)^{\beta}}\frac{K_{1}'}{(n+1)^{\alpha}}. 
\end{align*}
Taking $K_{1}' \geq 2^{1+\alpha}C_{1}c_{\gamma}^{-1}c_{0}^{-1}$,
\begin{align*}
\mathbb{E}\left[ \left\| Z_{n+1} - m \right\|^{2} \right] & \leq  \left( \left( 1-c_{0}\gamma_{n} \right) \left( \frac{n+1}{n}\right)^{\alpha} + \frac{1}{2}c_{0}\gamma_{n} + \frac{2^{\alpha + \beta +1}c_{\gamma}C_{2}}{(n+1)^{\beta}} \right) \frac{K_{1}'}{(n+1)^{\alpha}}.
\end{align*}
By definition of $n_{\beta}$,
\begin{equation}
\mathbb{E}\left[ \left\| Z_{n+1} - m \right\|^{2} \right] \leq \frac{K_{1}'}{(n+1)^{\alpha}}.
\end{equation}
In the same way, one can check by induction and applying Lemma \ref{majznp2} that
\[
\mathbb{E}\left[ \left\| Z_{n+1} - m \right\|^{4} \right] \leq \left( \left( 1- \frac{2}{n} \right)^{2}\left( \frac{n+1}{n}\right)^{\beta} + 2^{3\alpha} \frac{C_{1}' + C_{2}'c_{\gamma}^{2}}{(n+1)^{3\alpha - \beta}} \right) \frac{C_{\beta ,1}'}{(n+1)^{\beta}} .
\]
By definition of $n_{\beta}$,
\begin{equation}
\mathbb{E}\left[ \left\| Z_{n+1} - m \right\|^{4} \right] \leq \frac{C_{\beta , 1}'}{n^{\beta}},
\end{equation}
which concludes the induction on $n$, and one can conclude the induction on $p$ and the proof in a similar way as in \cite{godichon2015}.
\end{proof}

\subsection{Proof of Theorem \ref{theomoy}}
\begin{proof}[Proof of Theorem \ref{theomoy}]
Let $\lambda_{\min}$ be the smallest eigenvalue of $\Gamma_{m}$, with the help of decomposition (\ref{decmoy}), for all integer $p \geq 1$,
\begin{align*}
\mathbb{E}\left[ \left\| \overline{Z}_{n} - m \right\|^{2p} \right] & \leq \frac{5^{2p-1}}{\lambda_{\min}^{2p}n^{2p}} \frac{\mathbb{E}\left[\left\| Z_{1} - m \right\|^{2p}\right]}{\gamma_{1}^{2p}} + \frac{5^{2p-1}}{\lambda_{\min}^{2p}n^{2p}} \frac{\mathbb{E}\left[ \left\| Z_{n+1} - m \right\|^{2p} \right]}{\gamma_{n}^{2p}} + \frac{5^{2p-1}}{\lambda_{\min}^{2p}n^{2p}} \mathbb{E}\left[ \left\| \sum_{k=1}^{n} \delta_{k} \right\|^{2p} \right] \\
& + \frac{5^{2p-1}}{\lambda_{\min}^{2p}n^{2p}} \mathbb{E}\left[ \left\| \sum_{k=2}^{n} \left( Z_{k} - m \right) \left( \frac{1}{\gamma_{k}} - \frac{1}{\gamma_{k-1}} \right) \right\|^{2p} \right] + \frac{5^{2p-1}}{\lambda_{\min}^{2p}n^{2p}} \mathbb{E}\left[ \left\| \sum_{k=1}^{n} \xi_{k+1} \right\|^{2p} \right] .
\end{align*}
As in \cite{godichon2015}, applying Theorem \ref{theol2} and Lemma 4.1 in \cite{godichon2015}, one can check that there are positive constants $R_{1,p},R_{2,p},R_{3,p},R_{4,p}$ such that for all $n \geq 1$,
\begin{align*}
& \frac{1}{n^{2p}}\frac{\mathbb{E}\left[ \left\| Z_{1} - m \right\|^{2p} \right]}{\gamma_{1}^{2p}} \leq \frac{R_{1,p}}{n^{2p}} , \\
& \frac{1}{n^{2p}} \frac{\mathbb{E}\left[ \left\| Z_{n+1} - m \right\|^{2p}\right]}{\gamma_{n}^{2p}} \leq  \frac{R_{2,p}}{n^{(2-\alpha)p}} , \\
& \frac{1}{n^{2p}}\mathbb{E}\left[ \left\| \sum_{k=2}^{n} \left( Z_{k} - m \right) \left( \frac{1}{\gamma_{k}} - \frac{1}{\gamma_{k-1}} \right) \right\|^{2p} \right] \leq \frac{R_{3,p}}{n^{(2-\alpha)p}}  , \\
& \frac{1}{n^{2p}}\mathbb{E}\left[ \left\| \sum_{k=1}^{n} \delta_{k} \right\|^{2p} \right] \leq  \frac{R_{4,p}}{n^{2\alpha p}} .
\end{align*}
We now prove with the help of a strong induction that for all integer $p \geq 1$, there is a positive constant $C_{p}$ such that
\[
\mathbb{E}\left[ \left\| \sum_{k=1}^{n} \xi_{k+1} \right\|^{2p} \right] \leq C_{p}n^{p}.
\]

\bigskip

\textbf{Step 1: Initialization of the induction.} Since $\left( \xi_{n} \right)$ is martingale differences sequence adapted to the filtration $\left( \mathcal{F}_{n} \right)$,
\begin{align*}
\mathbb{E}\left[ \left\| \sum_{k=1}^{n} \xi_{k+1} \right\|^{2} \right]  & = \sum_{k=1}^{n} \mathbb{E}\left[ \left\| \xi_{k+1} \right\|^{2} \right] + 2\sum_{k=1}^{n} \sum_{k' = k+1}^{n}\mathbb{E}\left[\left\langle \xi_{k+1} , \xi_{k'+1} \right\rangle  \right]  = \sum_{k=1}^{n} \mathbb{E}\left[ \left\| \xi_{k+1} \right\|^{2} \right] .
\end{align*}
Moreover, since $\mathbb{E}\left[ \left\| \xi_{n+1} \right\|^{2} |\mathcal{F}_{n} \right] \leq \mathbb{E}\left[ \left\| U_{n+1} \right\|^{2}|\mathcal{F}_{n} \right] \leq 2 \mathbb{E}\left[ f(X_{n+1},Z_{n})^{2} |\mathcal{F}_{n} \right] + 2C^{2}\left\| Z_{n} - m \right\|^{2}$, applying Theorem \ref{theol2}, there is a positive constant $C_{1}$ such that for all $n \geq 1$,
\begin{align*}
\mathbb{E}\left[ \left\| \sum_{k=1}^{n} \xi_{k+1} \right\|^{2} \right] & \leq 2 \sum_{k=1}^{n} \mathbb{E}\left[ f(X_{k+1},Z_{k} )^{2} |\mathcal{F}_{k} \right] + 2C^{2} \sum_{k=1}^{n}\mathbb{E}\left[ \left\| Z_{k} - m \right\|^{2} \right]  \leq C_{1}n .
\end{align*}

\bigskip

\textbf{Step 2: the induction.} Let $p \geq 2$, we suppose from now that for all $p' \leq p-1$, there is a positive constant $C_{p'}$ such that for all $n \geq 1$,
\[
\mathbb{E}\left[ \left\| \sum_{k=1}^{n} \xi_{k+1} \right\|^{2p'} \right] \leq C_{p'}n^{p'}.
\]
First, note that
\[
\left\| \sum_{k=1}^{n+1}\xi_{k+1} \right\|^{2} = \left\| \sum_{k=1}^{n} \xi_{k+1} \right\|^{2} + 2 \left\langle \sum_{k=1}^{n} \xi_{k+1} , \xi_{n+2} \right\rangle + \left\| \xi_{n+2} \right\|^{2}.
\]
Thus, let $M_{n} := \sum_{k=1}^{n} \xi_{k+1}$, with the help of previous equality and applying Cauchy-Schwarz's inequality,
\begin{align*}
\left\| M_{n+1} \right\|^{2p} & \leq \left( \left\| M_{n} \right\|^{2} + \left\| \xi_{n+2} \right\|^{2} \right)^{p} + 2 \left\langle M_{n} , \xi_{n+2} \right\rangle \left( \left\| M_{n} \right\|^{2} + \left\| \xi_{n+2} \right\|^{2} \right)^{p-1} \\
& + \sum_{k=2}^{p} \binom{p}{k} 2^{k}\left\| M_{n} \right\|^{k} \left\| \xi_{n+2} \right\|^{k} \left( \left\| M_{n} \right\|^{2} + \left\| \xi_{n+2} \right\|^{2} \right)^{p-k} .
\end{align*}
We now bound the expectation of the three terms on the right-hand side of previous inequality. First, since
\begin{align*}
\left\| U_{n+1} \right\| & \leq f \left( X_{n+1}, Z_{n} \right) + C \left\| Z_{n} - m \right\| , \\
\left\| \Phi (Z_{n}) \right\| & \leq \sqrt{L_{1}} + C \left\| Z_{n} - m \right\| , 
\end{align*} 
we have
\begin{align*}
\mathbb{E}\left[ \left\| \xi_{n+2} \right\|^{2k} |\mathcal{F}_{n+1} \right] & \leq 3^{2k-1} \left( \mathbb{E}\left[ f(X_{n+2},Z_{n})^{2k}|\mathcal{F}_{n+1} \right] + 2^{2k}C^{2k}\left\| Z_{n+1} - m \right\|^{2k} + L_{1}^{k} \right) \\
& \leq 3^{2k-1}\left( L_{k} + L_{1}^{k} + 2^{2k}C^{2k}\left\| Z_{n+1} - m \right\|^{2k} \right) .
\end{align*}
Then, since $M_{n}$ is $F_{n+1}$-measurable,
\begin{align*}
\mathbb{E}\left[ \left( \left\| M_{n} \right\|^{2} + \left\| \xi_{n+2} \right\|^{2} \right)^{p} \right] & \leq \mathbb{E}\left[ \left\| M_{n} \right\|^{2p} \right] + \sum_{k=1}^{p} \binom{p}{k} \mathbb{E}\left[ \mathbb{E}\left[ \left\| \xi_{n+2} \right\|^{2k}|\mathcal{F}_{n} \right]\left\| M_{n} \right\|^{2p-2k} \right] \\
& \leq \mathbb{E}\left[ \left\| M_{n} \right\|^{2p} \right] + \sum_{k=1}^{p} \binom{p}{k} 3^{2k-1}\left( L_{k} + L_{1}^{k} \right) \mathbb{E}\left[ \left\| M_{n} \right\|^{2p-2k} \right] \\
& + \sum_{k=1}^{p} \binom{p}{k} 3^{2k-1}2^{2k}C^{2k} \mathbb{E}\left[ \left\| Z_{n+1} - m \right\|^{2k}\left\| M_{n} \right\|^{2p-2k} \right]
\end{align*}
By induction, 
\begin{align*}
\sum_{k=1}^{p} \binom{p}{k} 3^{2k-1}\left( L_{k} + L_{1}^{k} \right) \mathbb{E}\left[ \left\| M_{n} \right\|^{2p-2k} \right] \leq \sum_{k=1}^{p} \binom{p}{k} 3^{2k-1}\left( L_{k} + L_{1}^{k} \right) C_{p-k}n^{p-k} = O \left( n^{p-1} \right) .
\end{align*}
Moreover, since for all positive real number $a$ and for all positive integer $q$, $a \leq 1 + a^{q}$, applying Hölder's inequality and by induction, let
\begin{align*}
(\star ) :& = \sum_{k=1}^{p}\binom{p}{k} 3^{2k-1} 2^{2k}C^{2k} \mathbb{E}\left[ \left\| Z_{n+1} - m \right\|^{2k} \left\| M_{n} \right\|^{2p-2k} \right] \\
& \leq \sum_{k=1}^{p}\binom{p}{k}  3^{2k-1}2^{2k}C^{2k} \mathbb{E}\left[ \left\| M_{n} \right\|^{2p-2k} \right] + \sum_{k=1}^{p}\binom{p}{k}  3^{2k-1}2^{2k}C^{2k} \mathbb{E}\left[ \left\| Z_{n+1} - m \right\|^{2qk} \left\| M_{n} \right\|^{2p-2k} \right] \\
& \leq \sum_{k=1}^{p}\binom{p}{k}3^{2k-1}2^{2k}C^{2k} \left( \mathbb{E}\left[ \left\| Z_{n+1} - m \right\|^{2qp} \right] \right)^{\frac{k}{p}} \left( \mathbb{E}\left[ \left\| M_{n} \right\|^{2p} \right] \right)^{\frac{2p -2k}{2p}} + O \left( n^{p-1} \right) . 
\end{align*}
Note that $\left( \mathbb{E}\left[ \left\| M_{n} \right\|^{2p} \right] \right)^{\frac{2p -2k}{2p}} \leq 1+ \mathbb{E}\left[ \left\| M_{n} \right\|^{2p} \right]$. Thus, taking $q \geq 2$ and applying Theorem~\ref{theol2}, there are positive constants $C_{0},C_{1}'$ such that
\begin{align*}
(\star ) & \leq \sum_{k=1}^{p}\binom{p}{k}3^{2k-1}2^{2k}C^{2k}\left( K_{qp} \right)^{\frac{k}{p}}\frac{1}{n^{qk\alpha}} \left( 1 + \mathbb{E}\left[ \left\| M_{n} \right\|^{2p} \right] \right) + O \left( n^{p-1} \right) \\
& \leq C_{0}\gamma_{n}^{2} \mathbb{E}\left[ \left\| M_{n} \right\|^{2p} \right] + C_{1}'n^{p-1} . 
\end{align*}
Finally, there are positive constants $C_{0},C_{1}$ such that
\begin{equation}\label{preminmoy}
\mathbb{E}\left[ \left( \left\| M_{n} \right\|^{2} + \left\| \xi_{n+2} \right\|^{2} \right)^{p} \right] \leq \left( 1+C_{0}\gamma_{n}^{2} \right) \mathbb{E}\left[ \left\| M_{n} \right\|^{2p} \right] + C_{1}n^{p-1}.
\end{equation}
Moreover, since $\left( \xi_{n} \right)$ is a martingale differences sequence adapted to the filtration $\left( \mathcal{F}_{n} \right)$ and applying Lemma \ref{lemmaa1},
\begin{align*}
2\mathbb{E}\left[ \left\langle M_{n} , \xi_{n+2} \right\rangle \left( \left\| M_{n} \right\|^{2} + \left\| \xi_{n+2} \right\|^{2} \right)^{p-1}\right] & = 2\sum_{k=1}^{p-1}\binom{p-1}{k}\mathbb{E}\left[ \left\langle M_{n} , \xi_{n+2} \right\rangle \left\| \xi_{n+2} \right\|^{2k}\left\| M_{n} \right\|^{2p-2-2k} \right] \\
& \leq \sum_{k=1}^{p-1} \binom{p-1}{k} \mathbb{E}\left[ \left\| \xi_{n+2} \right\|^{2k+2} \left\| M_{n} \right\|^{2p-2-2k} \right] \\
& + \sum_{k=1}^{p-1} \binom{p-1}{k} \mathbb{E}\left[ \left\| \xi_{n+2} \right\|^{2k} \left\| M_{n} \right\|^{2p-2k} \right]
\end{align*}
Since $p\geq 2$ and by induction, as for $( \star )$, one can check that there are positive constants $C_{0}' ,C_{1}'$ such that for all $n \geq 1$,
\begin{equation}
2\mathbb{E}\left[ \left\langle M_{n} , \xi_{n+2} \right\rangle \left( \left\| M_{n} \right\|^{2} + \left\| \xi_{n+2} \right\|^{2} \right)^{p-1}\right] \leq C_{0}'\gamma_{n}^{2}\mathbb{E}\left[ \left\| M_{n} \right\|^{2p} \right] + C_{1}'n^{p-1}.
\end{equation}
Moreover, let
\begin{align*}
( \star \star ) :& = \sum_{k=2}^{p}\binom{p}{k} 2^{k} \mathbb{E}\left[ \left\| M_{n} \right\|^{k}\left\| \xi_{n+2} \right\|^{k} \left( \left\| M_{n} \right\|^{2} + \left\| \xi_{n+2} \right\|^{2} \right)^{p-k} \right] \\
& \leq \sum_{k=2}^{p}\binom{p}{k}2^{p-1} \mathbb{E}\left[ \left\| \xi_{n+2}\right\|^{k}\left\| M_{n} \right\|^{2p-k} \right] + \sum_{k=2}^{p}\binom{p}{k}2^{p-1} \mathbb{E}\left[ \left\| M_{n} \right\|^{k}  \left\| \xi_{n+2} \right\|^{2p-k}  \right] .
\end{align*}
We now bound the two terms on the right-hand side of previous inequality. First, let
\begin{align*}
( \star \star ') & :=\sum_{k=2}^{p}\binom{p}{k}2^{p-1} \mathbb{E}\left[ \left\| M_{n} \right\|^{k}  \left\| \xi_{n+2} \right\|^{2p-k}  \right] \\
 & \leq \sum_{k=2}^{p}\binom{p}{k}2^{p-3} \mathbb{E}\left[ \left( \left\| M_{n} \right\|^{2} + \left\| M_{n} \right\|^{2k-2} \right) \left( \left\| \xi_{n+2} \right\|^{2p-2k+2} + \left\| \xi_{n+2} \right\|^{2p-2} \right) \right]  
\end{align*}
As for $(\star )$, one can check that there are positive constants $C_{0}'',C_{1}''$ such that for all $n \geq 1$,
\begin{equation}
\notag ( \star \star ' ) \leq C_{0}'' \gamma_{n}^{2} \mathbb{E}\left[ \left\| M_{n} \right\|^{2p} \right] +C_{1}''n^{p-1}.
\end{equation}
In the same way, let
\begin{align*}
( \star \star '') & := \sum_{k=2}^{p}\binom{p}{k}2^{p-1} \mathbb{E}\left[ \left\| \xi_{n+2}\right\|^{k}\left\| M_{n} \right\|^{2p-k} \right] \\
& \leq \sum_{k=2}^{p}\binom{p}{k}2^{p-3} \mathbb{E}\left[ \left( \left\| \xi_{n+2}\right\|^{2} + \left\| \xi_{n+2} \right\|^{2k-2} \right) \left( \left\| M_{n} \right\|^{2p-2k+2} + \left\| M_{n} \right\|^{2} \right) \right]
\end{align*}
As for $(\star)$, there are positive constants $C_{0}''', C_{1}'''$ such that
\[
( \star \star '') \leq C_{0}'''\gamma_{n}^{2} \mathbb{E}\left[ \left\| M_{n} \right\|^{2p}\right] + C_{1}'''n^{p-1},
\]
and in a particular case
\begin{equation}\label{troisinegmoy}
( \star \star ) \leq \left( C_{0}'' + C_{0}''' \right) \gamma_{n}^{2}\mathbb{E}\left[ \left\| M_{n} \right\|^{2p} \right] + \left( C_{1}'' + C_{1}''' \right)n^{p-1} .
\end{equation}
Thus, thanks to inequalities (\ref{preminmoy}) to (\ref{troisinegmoy}), there are positive constants $B_{0},B_{1}$ such that for all $n \geq 1$,
\begin{align*}
\mathbb{E}\left[ \left\| M_{n+1} \right\|^{2p} \right] &  \leq \left( 1+B_{0}\gamma_{n}^{2} \right) \mathbb{E}\left[ \left\| M_{n} \right\|^{2p}\right] + B_{1}n^{p-1} \\
& \leq \left( \prod_{k=1}^{\infty} \left( 1+B_{0}\gamma_{k}^{2} \right)\right) \mathbb{E}\left[ \left\| M_{1} \right\|^{2p} \right] + \left( \prod_{k=1}^{\infty} \left( 1+B_{0}\gamma_{k}^{2} \right)\right) \sum_{k=1}^{n}B_{1}k^{p-1} \\
& \leq \left( \prod_{k=1}^{\infty} \left( 1+B_{0}\gamma_{k}^{2} \right)\right) \mathbb{E}\left[ \left\| M_{1} \right\|^{2p} \right] + \left( \prod_{k=1}^{\infty} \left( 1+B_{0}\gamma_{k}^{2} \right)\right) B_{1}n^{p} , 
\end{align*}
which concludes the induction and the proof.
\end{proof}

\begin{appendix}
\renewcommand{\theequation}{\thesection--\arabic{equation}}

\section{Proofs of Propositions \ref{strongconv} and \ref{propdelta}}
\begin{proof}[Proof of Proposition \ref{strongconv}]
If $h \in \mathcal{B}\left( m , \epsilon \right)$, under assumptions \textbf{(A2)} and \textbf{(A3)} and by dominated convergence,
\begin{align*}
\left\langle \Phi (h) , h-m \right\rangle & = \left\langle \int_{0}^{1} \Gamma_{m+t(h-m)}(h-m)dt , h-m \right\rangle  \geq \frac{\lambda_{\min}}{2}\left\| h -m \right\|^{2} .
\end{align*}
In the same way, if $\left\| h- m \right\| > \epsilon$, since $G$ is convex, under assumptions \textbf{(A2)} and \textbf{(A3)} and by dominated convergence,
\begin{align*}
\left\langle \Phi (h) , h-m \right\rangle  = \left\langle \int_{0}^{1} \Gamma_{m+t(h-m)}(h-m)dt , h-m \right\rangle & \geq \int_{0}^{\frac{\epsilon}{\left\| h -m \right\|}} \left\langle \Gamma_{m+t(h-m)}(h-m) , h-m \right\rangle dt  \\
& \geq \frac{\lambda_{\min}\epsilon}{2}\left\| h - m \right\| .
\end{align*}
Thus, let $A$ be a positive constant and $h \in \mathcal{B}\left( m , A \right)$,
\[
\left\langle \Phi (h) ,h-m \right\rangle \geq c_{A} \left\| h - m \right\|^{2},
\]
with $c_{A}:= \min \left\lbrace \frac{\lambda_{\min}}{2} , \frac{\lambda_{\min}\epsilon}{2A} \right\rbrace$. 
We now give an upper bound of this term.
First, thanks to assumption \textbf{(A2)}, let $A$ be a positive constant, for all $h \in \mathcal{B}\left( m , A \right)$,
\begin{align*}
\left\langle \Phi (h) ,h-m \right\rangle & = \int_{0}^{1} \left\langle \Gamma_{m+t(h-m)}(h-m) , h-m \right\rangle dt  \leq \int_{0}^{1}\left\| \Gamma_{m+t(h-m)} \right\|_{op}\left\| h-m \right\|^{2}dt \leq C_{A} \left\| h-m \right\|^{2}.
\end{align*}
Moreover, applying Cauchy-Schwarz's inequality and thanks to assumption \textbf{(A5a)}, for all $h \in H$ such that $\left\| h-m \right\| \geq A$,
\begin{align*}
\left| \left\langle \Phi (h), h-m \right\rangle \right|  \leq \sqrt{L_{1}}\left\| h-m \right\| + C\left\| h-m \right\|^{2}  \leq \left(\frac{\sqrt{L_{1}}}{A}+C\right)\left\| h-m \right\|^{2},
\end{align*}
which concludes the proof.
\end{proof}

\begin{proof}[Proof of Proposition \ref{propdelta}]
Let us recall that there are positive constants $\epsilon ,C_{\epsilon}$ such that for all $h \in \mathcal{B}\left( m , \epsilon \right)$,
\[
\left\| \Phi (h) - \Gamma_{m}(h-m) \right\| \leq C_{\epsilon}\left\| h - m \right\|^{2}.
\]
Let $h \in H$ such that $\left\| h- m \right\| \geq \epsilon$. Then, thanks to assumptions \textbf{(A2)} and \textbf{(A3)}, 
\begin{align*}
\left\| \Phi (h) - \Gamma_{m}(h-m) \right\| & \leq \left\| \Phi (h) \right\| + \left\| \Gamma_{m} \right\|_{op}\left\| h- m \right\| \\
& \leq \left( \mathbb{E}\left[ f(X,h) \right] + C\left\| h-m \right\| \right) + C_{0}\left\| h-m \right\| \\
& \leq \left( \frac{\sqrt{L_{1}}}{\epsilon^{2}} + \frac{C}{\epsilon} + \frac{C_{0}}{\epsilon}\right) \left\| h-m \right\|^{2}, 
\end{align*}
which concludes the proof.
\end{proof}

\section{Proof of Lemma \ref{majzn}}
We propose here a not detailed proof. For analogous and more detailed calculus, one can see the proof of Lemma 5.3 in \cite{CCG2015}.

\begin{proof}[Proof of Lemma \ref{majzn}]
We prove Lemma \ref{majzn} with the help of a strong induction on $p$. The case $p=1$ is already done in the proof of Theorem 3.1. We suppose from now that $p \geq 2$ and that for all $k \leq p-1$, there is a positive constant $M_{k}$ such that for all $n \geq 1$,
\[
\mathbb{E}\left[ \left\| Z_{n} - m \right\|^{2k} \right] \leq M_{k}.
\]
Let $V_{n} :=  Z_{n} - m - \gamma_{n}\Phi (Z_{n} )$, and with the help of decomposition (\ref{decxi})
\begin{align*}
\left\| Z_{n+1} - m \right\|^{2} & = \left\| V_{n} \right\|^{2} + \gamma_{n}^{2}\left\| \xi_{n+1} \right\|^{2} +2 \gamma_{n} \left\langle V_{n} , \xi_{n+1} \right\rangle \\
 & \leq \left\| V_{n} \right\|^{2} + \gamma_{n}^{2}\left\| U_{n+1} \right\|^{2} + 2\gamma_{n} \left\langle Z_{n} - m , \xi_{n+1} \right\rangle .  
\end{align*}
Thus, applying Cauchy-Schwarz's inequaltiy
\begin{align}
\notag \left\| Z_{n+1} - m \right\|^{2p} & \leq \left( \left\| V_{n} \right\|^{2} + \gamma_{n}^{2} \left\| U_{n+1} \right\|^{2}\right)^{p} +2p\gamma_{n}\left\langle Z_{n}-m , \xi_{n+1} \right\rangle \left( \left\| V_{n} \right\|^{2} + \gamma_{n}^{2}\left\| U_{n+1} \right\|^{2} \right)^{p-1} \\
\label{decznp} & + \sum_{k=2}^{p} \binom{p}{k}2^{k} \gamma_{n}^{k}\left\| Z_{n}-m \right\|^{k}\left\| \xi_{n+1} \right\|^{k} \left( \left\| V_{n} \right\|^{2} + \gamma_{n}^{2}\left\| U_{n+1} \right\|^{2} \right)^{p-k}.
\end{align}
Applying Lemma \ref{lemmaa1}, for all positive integer $k$,
\begin{align}
\label{majun} \left\| U_{n+1} \right\|^{k} & \leq 2^{k-1}  f \left( X_{n+1} , Z_{n} \right) + 2^{k-1}C^{k} \left\| Z_{n} - m \right\|^{k} \quad a.s , \\
\label{majxi} \left\| \xi_{n+1} \right\|^{k} & \leq 3^{k-1} f \left( X_{n+1},Z_{n} \right)^{k} + 3^{k-1}2^{k}C^{k} \left\| Z_{n} - m \right\|^{k} + 3^{k-1}L_{1}^{\frac{k}{2}}\quad a.s.
\end{align}
Moreover, since $\left\langle \Phi \left( Z_{n} \right) , Z_{n} - m \right\rangle \geq 0$,
\begin{align}
\label{equationdemerde}\left\| V_{n} \right\|^{2} & \leq \left( 1+2C^{2}\gamma_{n}^{2} \right) \left\| Z_{n} - m \right\|^{2} + 2\gamma_{n}^{2}L_{1} 
\end{align}
We now bound each term on the right-hand side of inequality (\ref{decznp}).

\medskip

\textbf{Bounding $(*):=\mathbb{E}\left[ \left( \left\| V_{n} \right\|^{2} + \gamma_{n}^{2}\left\| U_{n+1} \right\|^{2}\right)^{p}\right]$.} Applying  inequality~(\ref{majun}),
\begin{align*}
(*) & \leq \mathbb{E}\left[ \left\| V_{n} \right\|^{2p} \right]  + \sum_{k=1}^{p}\binom{p}{k}\gamma_{n}^{2p} 2^{2p-2}L_{1}^{p-k}\mathbb{E}\left[  \mathbb{E}\left[ f(X_{n+1},Z_{n})^{2k} |\mathcal{F}_{n} \right] + C^{2k}\left\| Z_{n} - m \right\|^{2k} \right] \\
& + \sum_{k=1}^{p}\binom{p}{k}\gamma_{n}^{2k}2^{p+k-2}\left( 1+2C^{2}c_{\gamma}^{2} \right)^{p-k} \mathbb{E}\left[ \left\| Z_{n} - m \right\|^{2p-2k}\left(  \mathbb{E}\left[ f(X_{n+1},Z_{n})^{2k} |\mathcal{F}_{n} \right] + C^{2k}\left\| Z_{n} - m \right\|^{2k} \right) \right] .
\end{align*}
Moreover, thanks to assumption \textbf{(A5b)} and by induction, there are positive constants $A_{0},A_{1}$ such that
\begin{align}
\label{maj*vn} (*) & \leq  \mathbb{E}\left[ \left\| V_{n} \right\|^{2p} \right] +A_{0}\gamma_{n}^{2} \mathbb{E}\left[ \left\| Z_{n} - m \right\|^{2p} \right] + A_{1}\gamma_{n}^{2}. 
\end{align}
Thanks to inequality (\ref{equationdemerde}) and by induction,
\begin{align*}
\mathbb{E}\left[ \left\| V_{n} \right\|^{2p} \right] & \leq \left( 1+2C^{2}\gamma_{n}^{2} \right)^{p} \mathbb{E}\left[ \left\| Z_{n} - m \right\|^{2p} \right] + \sum_{k=1}^{p}\binom{p}{k}\left( 1+2C^{2}\gamma_{n}^{2} \right)^{p-k}2^{k}L_{1}^{k}\gamma_{n}^{2k}\mathbb{E}\left[ \left\| Z_{n} - m \right\|^{2p-2k}\right] \\
& \leq \left( 1+2C^{2}\gamma_{n}^{2} \right)^{p} \mathbb{E}\left[ \left\| Z_{n} - m \right\|^{2p} \right] + O \left( \gamma_{n}^{2} \right).
\end{align*}
Then, there are positive constants $A_{2},A_{3}$ such that 
\begin{equation}
\label{maj*1}(*) \leq \left( 1+A_{2}\gamma_{n}^{2} \right) \mathbb{E}\left[ \left\| Z_{n} - m \right\|^{2p}\right] + A_{3}\gamma_{n}^{2}.
\end{equation}

\textbf{Bounding $(**) := 2p\gamma_{n}\mathbb{E}\left[ \left\langle \xi_{n+1},Z_{n} - m \right\rangle \left( \left\| V_{n} \right\|^{2} + \gamma_{n}^{2}\left\| U_{n+1} \right\|^{2}\right)^{p-1}\right] $.}  Since $\left( \xi_{n} \right)$ is a martingale differences sequence adapted to the filtration $\left( \mathcal{F}_{n} \right)$, and since $V_{n}$ is $\mathcal{F}_{n}$-measurable, and applying inequalities (\ref{majun}) to (\ref{equationdemerde}), and by induction, one can check that there are positive constants $A_{1}',A_{2}'$ such that 
\begin{equation}
\label{maj**1} (**) \leq A_{1}'\gamma_{n}^{3}\mathbb{E}\left[ \left\| Z_{n} - m \right\|^{2p} \right]  + A_{2}' \gamma_{n}^{3} .
\end{equation}

\textbf{Bounding $ (***):= \sum_{k=2}^{p}\binom{p}{k}2^{k} \gamma_{n}^{k} \mathbb{E}\left[ \left\| Z_{n}-m \right\|^{k} \left\| \xi_{n+1} \right\|^{k}\left( \left\| V_{n} \right\|^{2} + \gamma_{n}^{2} \left\| U_{n+1} \right\|^{2} \right)^{p-k}\right] $.} Applying Lemma~\ref{lemmaa1}, and inequalities (\ref{majun}) to (\ref{equationdemerde}) and by induction, one can check that there are positive constants $A_{1}'',A_{2}''$ such that
\begin{equation}
\label{maj***1}(***) \leq A_{1}'' \gamma_{n}^{2}\mathbb{E}\left[ \left\| Z_{n} - m \right\|^{2p} \right] + A_{2}''\gamma_{n}^{2}.
\end{equation}

\textbf{Conclusion.} Applying inequalities (\ref{maj*1}) to (\ref{maj***1}) and by induction, there are positive constants $B_{1},B_{2}$ such that 
\begin{align*}
\mathbb{E}\left[ \left\| Z_{n+1} - m \right\|^{2p} \right] & \leq \left( 1 + B_{1}\gamma_{n}^{2} \right) \mathbb{E}\left[ \left\| Z_{n} - m \right\|^{2p} \right] + B_{2}\gamma_{n}^{2} \\
& \leq \left( \prod_{k=1}^{\infty} \left( 1 + B_{1}\gamma_{k}^{2} \right) \right)\mathbb{E}\left[ \left\| Z_{1} - m \right\|^{2p} \right] + B_{2}\left( \prod_{k=1}^{\infty} \left( 1 + B_{1}\gamma_{k}^{2} \right) \right)\sum_{k=1}^{\infty} \gamma_{k}^{2} \\
& \leq M_{p}, 
\end{align*}
which concludes the induction and the proof.

\end{proof}

\section{Proof of Lemma \ref{majznp2}}
We propose here a not detailed proof. For analogous and more detailed calculus, one can see the proof of Lemma 4.2 in \cite{godichon2015}.

\begin{proof}[Proof of Lemma \ref{majznp2}]
Let $p \geq 1$, we suppose from now that for all integer $k < p$, there is a positive constant $K_{k}$ such that for all $n \geq 1$,
\begin{equation}
\label{lemrecs}\mathbb{E}\left[ \left\| Z_{n} - m \right\|^{2k} \right] \leq \frac{K_{k}}{n^{k\alpha}} .	
\end{equation}
As in the previous proof, let us recall that
\begin{align}
\notag \left\| Z_{n+1} - m \right\|^{2p+2} & \leq \left( \left\| V_{n} \right\|^{2} + \gamma_{n}^{2} \left\| U_{n+1} \right\|^{2} \right)^{p+1} + 2 (p+1) \gamma_{n} \left\langle Z_{n}-m , \xi_{n+1} \right\rangle \left( \left\| V_{n} \right\|^{2} + \gamma_{n}^{2} \left\| U_{n+1} \right\|^{2}\right)^{p} \\
\label{ineqpasbelle} & + \sum_{k=2}^{p+1}\binom{p+1}{k}2^{k} \gamma_{n}^{k} \left\| Z_{n}-m \right\|^{k} \left\| \xi_{n+1} \right\|^{k} \left( \left\| V_{n} \right\|^{2} + \gamma_{n}^{2}\left\| U_{n+1} \right\|^{2} \right)^{p+1-k}.
\end{align}
We now bound the expectation of each term on the right-hand side of previous inequality. 

\medskip

\textbf{Bounding $(**):=\mathbb{E}\left[ 2 (p+1) \gamma_{n} \left\langle Z_{n}-m , \xi_{n+1} \right\rangle \left( \left\| V_{n} \right\|^{2} + \gamma_{n}^{2} \left\| U_{n+1} \right\|^{2}\right)^{p}\right] $.} Since $\left( \xi_{n} \right)$ is a sequence of martingale differences adapted to the filtration $\left( \mathcal{F}_{n} \right)$, and applying inequalities (\ref{majun}) and (\ref{majxi}), and thanks to asumtpion \textbf{(A5b)} as well as inequality (\ref{lemrecs}), one can check that there are positive constants $A_{1},A_{2},A_{3}$ such that
\begin{equation}
\label{maj**2} (**) \leq A_{1}\gamma_{n}^{3} \mathbb{E}\left[ \left\| Z_{n} - m \right\|^{2p+2} \right] + A_{2}\gamma_{n}^{3}\mathbb{E}\left[ \left\| Z_{n} - m \right\|^{2p} \right] + \frac{A_{3}}{n^{(p+2)\alpha}}.
\end{equation}

\textbf{Bounding $(***) : = \sum_{k=2}^{p+1}\binom{p+1}{k}2^{k}\gamma_{n}^{k} \mathbb{E}\left[ \left\| Z_{n}-m \right\|^{k} \left\| \xi_{n+1} \right\|^{k} \left( \left\| V_{n} \right\|^{2} + \gamma_{n}^{2} \left\| U_{n+1} \right\|^{2} \right)^{p+1-k} \right]$.} First, thanks to inequality (\ref{majxi}) and Lemma \ref{lemmaa1},
one can check that there are positive constants $A_{1}',A_{2}',A_{3}'$ such that
\begin{equation}
\label{maj***2}(***) \leq A_{1}' \gamma_{n}^{2}\mathbb{E}\left[ \left\| Z_{n} - m \right\|^{2p+2} \right] + A_{2}'\gamma_{n}^{2}\mathbb{E}\left[ \left\| Z_{n} - m \right\|^{2p} \right] + \frac{A_{3}'}{n^{(p+2)\alpha}}.
\end{equation}
Thus, applying inequalities (\ref{maj***2}) to (\ref{maj*2}), there are positive constants $B_{0},B_{1},B_{2}$ such that
\begin{align}
\notag \mathbb{E}\left[ \left\| Z_{n+1}-m \right\|^{2p+2} \right] & \leq \mathbb{E}\left[ \left( \left\| V_{n} \right\|^{2} + \gamma_{n}^{2} \left\| U_{n+1} \right\|^{2} \right)^{p+1}\right] + B_{0}\gamma_{n}^{2}\mathbb{E}\left[ \left\| Z_{n} - m \right\|^{2p+2}\right]  \\
\label{majvn*} & + B_{1}\gamma_{n}^{2}\mathbb{E}\left[ \left\| Z_{n} - m \right\|^{2p} \right] 
 + \frac{B_{2}}{n^{(p+2)\alpha}}.
\end{align}

\bigskip

\textbf{Bounding $(*):= \mathbb{E}\left[ \left( \left\| V_{n} \right\|^{2} + \gamma_{n}^{2} \left\| U_{n+1} \right\|^{2} \right)^{p+1}\right] $.} As in the proof of Lemma \ref{majzn}, and thanks to induction inequality (\ref{lemrecs}), there are positive constants $A_{0},A_{0}',A_{0}''$ such that 
\begin{equation}
\label{maj*2} (*) \leq \mathbb{E}\left[ \left\| V_{n} \right\|^{2p+2} \right] + A_{0}\gamma_{n}^{2} \mathbb{E}\left[ \left\| Z_{n} - m \right\|^{2p+2}\right] + A_{0}'\gamma_{n}^{2}\mathbb{E}\left[ \left\| Z_{n} - m \right\|^{2p} \right] + \frac{A_{0}''}{n^{(p+2)\alpha}}.
\end{equation}

Then, in order to conclude the proof, we just have to bound $\mathbb{E}\left[ \left\| V_{n} \right\|^{2p} \right]$.
Applying Proposition 2.1, one can check that there is a positive constant $c$ and a rank $n_{\alpha}'$ such that for all $n \geq n_{\alpha}'$,
\[
C\left\| Z_{n} - m \right\|^{2}  \mathbb{1}_{\left\lbrace \left\| Z_{n} - m \right\| \leq cn^{1-\alpha} \right\rbrace }  \geq \left\langle \Phi (Z_{n} ) , Z_{n} - m \right\rangle \mathbb{1}_{\left\lbrace \left\| Z_{n} - m \right\| \leq cn^{1-\alpha} \right\rbrace } \geq \frac{4}{c_{\gamma}n^{1-\alpha}}\left\| Z_{n} - m \right\|^{2}\mathbb{1}_{\left\lbrace \left\| Z_{n} - m \right\| \leq cn^{1-\alpha} \right\rbrace }.
\]
Then, since $\left\| \Phi (Z_{n} ) \right\|^{2} \leq 2C^{2}\left\| Z_{n} - m \right\|^{2} + 2L_{1}\gamma_{n}^{2}$, there is a rank $n_{\alpha}''$ such that for all $n \geq n_{\alpha}''$,
\[
\left\| Z_{n} - m - \gamma_{n}\Phi \left( Z_{n} \right) \right\|^{2}\mathbb{1}_{\left\lbrace \left\| Z_{n} - m \right\|\leq cn^{1-\alpha}\right\rbrace} \leq \left( 1- \frac{3}{n}\right) \left\| Z_{n} - m \right\|^{2}\mathbb{1}_{\left\lbrace \left\| Z_{n} - m \right\| \leq cn^{1-\alpha} \right\rbrace} + 2L_{1}\gamma_{n}^{2}.
\]
Then, one can check that there are positive constants $A_{1}''',A_{2}'''$ such that
\begin{align*}
\mathbb{E} & \left[ \left\| Z_{n} - m - \gamma_{n} \Phi (Z_{n}) \right\|^{2p+2}\mathbb{1}_{\left\lbrace \left\| Z_{n} - m \right\| \leq cn^{1-\alpha}\right\rbrace} \right]  \\
& \leq \left( 1- \frac{3}{n}\right)^{p+1} \mathbb{E}\left[ \left\| Z_{n} - m \right\|^{2p+2} \right] + A_{1}'''\gamma_{n}^{2}\mathbb{E}\left[ \left\| Z_{n} - m \right\|^{2p}\right]  + \frac{A_{2}'''}{n^{(p+2)\alpha}}  .
\end{align*}
Moreover, applying Cauchy-Schwarz's inequality, Markov's inequality and Lemma \ref{majzn}, for all positive integer $q$,
\begin{align*}
\mathbb{E}\left[ \left\| Z_{n} - m  \right\|^{2p+2} \mathbb{1}_{\left\lbrace\left\| Z_{n} - m \right\| \geq cn^{1-\alpha}\right\rbrace}\right] & \leq \sqrt{\mathbb{E}\left[ \left\| Z_{n} - m \right\|^{4p+4} \right]} \sqrt{\mathbb{P}\left[ \left\| Z_{n} - m \right\| \geq cn^{1-\alpha}\right]} \\
& \leq   \sqrt{M_{2p+2}}\frac{\sqrt{M_{q}}}{c^{q}n^{q(1-\alpha)}},
\end{align*}
and one can conclude the proof applying inequality (\ref{majvn*}), taking $q \geq \frac{(p+2)\alpha}{1-\alpha} $ and taking a rank $n_{\alpha}$ such that for all $n \geq n_{\alpha}$, $\left( 1- \frac{3}{n} \right)^{p+1} + \left( B_{0} + A_{1}''' \right)\gamma_{n}^{2} \leq \left( 1 - \frac{2}{n} \right)^{p+1}$.
\end{proof}

\begin{rmq}
Note that in order to get the rate of convergence in quadratic mean of the Robbins-Monro algorithm, i.e in the case where $p=1$, we just have to suppose that there are a positive integer $q \geq \frac{3\alpha}{1-\alpha}$ and a positive constant $L_{q}$ such that for all $h \in H$, $\mathbb{E}\left[ f \left( X , h \right)^{2q} \right] \leq L_{q}$.
\end{rmq}

\medskip

\section{Proof of Lemma \ref{majznp}}
We propose here a not detailed proof. For analogous and more detailed calculus, one can see Lemma 4.1 in \cite{godichon2015}.

\begin{proof}[Proof of Lemma 5.2]
Let $p \geq 1$, we suppose from now that for all integer $k < p$, there is a positive constant $K_{k}$ such that for all $n \geq 1$,
\begin{equation}
\label{lemrecss}\mathbb{E}\left[ \left\| Z_{n} - m \right\|^{2k} \right] \leq \frac{K_{k}}{n^{k\alpha}} .	
\end{equation}

Using decomposition (\ref{decdelta}) and Cauchy-Schwarz's inequality, there are a  positive constant $c'$ and a rank $n_{\alpha}'$ such that for all $n \geq n_{\alpha}'$,
\begin{align*}
\left\| Z_{n+1} - m \right\|^{2}    \leq \left( 1- c'\gamma_{n} \right) \left\| Z_{n} - m \right\|^{2} + \gamma_{n}^{2}\left\| U_{n+1} \right\|^{2}   + 2 \gamma_{n} \left\langle Z_{n} - m , \xi_{n+1} \right\rangle  + 2\gamma_{n} \left\| Z_{n} - m \right\| \left\| \delta_{n} \right\| .  
\end{align*}
If $p=1$, thanks to Proposition \ref{propdelta}, we have
\[
2\left\| \delta_{n} \right\| \left\| Z_{n} - m \right\| \leq \frac{c'}{2}\gamma_{n}\left\| Z_{n} - m \right\|^{2} + 2\frac{C_{m}^{2}}{c'}\left\| Z_{n} - m \right\|^{4},
\]
and since $\left( \xi_{n} \right)$ is a martingale differences sequence adapted to the filtration $\left( \mathcal{F}_{n} \right)$, applying inequality (\ref{majun}), for all $n \geq n_{\alpha}'$,
\begin{align*}
\mathbb{E}\left[ \left\| Z_{n+1} - m \right\|^{2} \right] & \leq \left( 1-\frac{c'}{2}\gamma_{n} +2C^{2}\gamma_{n}^{2} \right) \mathbb{E}\left[ \left\| Z_{n} - m \right\|^{2} \right] + 2\gamma_{n}^{2}L_{1}   + 2\gamma_{n} \frac{C_{m}^{2}}{c'}\mathbb{E}\left[ \left\| Z_{n} - m \right\|^{4} \right] , 
\end{align*}
and one can conclude the proof for $p=1$ taking a rank $n_{\alpha}$ and a positive constant $c$ such that for all $n \geq n_{\alpha}$, $1-\frac{c'}{2}\gamma_{n} + 2C^{2}\gamma_{n}^{2} \leq 1-c\gamma_{n}$.

\bigskip

We suppose from now that $p \geq 2$. For all $n \geq n_{\alpha}'$,
\begin{align}
\notag \mathbb{E}\left[ \left\| Z_{n+1} - m \right\|^{2p} \right] &  \leq \left( 1-c'\gamma_{n} \right) \mathbb{E}\left[ \left\| Z_{n} - m \right\|^{2} \left\| Z_{n+1} - m \right\|^{2p-2} \right] + 2 \gamma_{n} \mathbb{E}\left[ \left\| Z_{n} - m \right\| \left\| \delta_{n} \right\| \left\| Z_{n+1} - m \right\|^{2p-2} \right] \\
\label{majtrestrespourri} & +  \gamma_{n}^{2}\mathbb{E}\left[ \left\| U_{n+1} \right\|^{2} \left\| Z_{n+1} - m \right\|^{2p-2} \right] + 2 \gamma_{n} \mathbb{E}\left[ \left\langle Z_{n} - m  , \xi_{n+1} \right\rangle \left\| Z_{n+1} - m \right\|^{2p-2} \right] . 
\end{align}

\bigskip

We now bound each term which appear on the right-hand side of inequality (\ref{majtrestrespourri}) when we replace $\left\| Z_{n+1} - m \right\|^{2p-2}$ by the bound given by inequality (\ref{ineqpasbelle}).

\bigskip

\textbf{Bounding $(\star):=\left( 1-c'\gamma_{n} \right) \mathbb{E}\left[ \left\| Z_{n} - m \right\|^{2} \left\| Z_{n+1} - m \right\|^{2p-2} \right]$.} First, applying inequality (\ref{equationdemerde})

\begin{align*}
(*) :& = \left( 1-c'\gamma_{n} \right) \mathbb{E}\left[ \left\| Z_{n} - m \right\|^{2} \left( \left\| V_{n} \right\|^{2} + \gamma_{n}^{2} \left\| U_{n+1} \right\|^{2} \right)^{p-1} \right]  \\
& \leq \left( 1-c'\gamma_{n} \right)\left( 1+2C^{2}\gamma_{n}^{2}\right)^{p-1} \mathbb{E}\left[ \left\| Z_{n} - m \right\|^{2p} \right] \\
& + \sum_{k=0}^{p-2}\binom{p-1}{k} \left( 1-c'\gamma_{n} \right) \gamma_{n}^{2(p-1-k)}\left( 1+2C^{2}c_{\gamma}^{2}\right)^{k}\mathbb{E}\left[ \left\| Z_{n} - m \right\|^{2k+2}\left( 2L_{1} + \left\| U_{n+1} \right\|^{2}\right)^{p-1-k} \right] . 
\end{align*}
Applying inequalities (\ref{majun}) and (\ref{lemrecss}), thanks to assumption \textbf{(A5b)} and since for all $n \geq n_{\alpha}$ we have $1-c'\gamma_{n}\leq 1$,
and for all $k \leq p-2$, we have $2p-1-k ~\geq p+1$, one can check that there is a positive constant $A_{1}$ such that
\begin{equation}
\label{maj*znp2}(*) \leq \left( 1-c'\gamma_{n} + A_{1}\gamma_{n}^{2} \right)\mathbb{E}\left[ \left\| Z_{n} - m \right\|^{2p} \right] + O \left( \frac{1}{n^{(p+1)\alpha}} \right) . 
\end{equation}
In the same way, since $\left(\xi_{n}\right)$ is a sequence of martingale differences adapted to $\left( \mathcal{F}_{n} \right)$, applying Cauchy-Schwarz's inequality, as well as inequalities (\ref{lemrecss}), (\ref{majun}) to (\ref{equationdemerde}), one can check that there is a positive constant $A_{2}$ such that 
\begin{align}
\notag (*)' : & = 2(p-1)\left( 1-c'\gamma_{n}\right) \gamma_{n} \mathbb{E}\left[ \left\| Z_{n} - m \right\|^{2} \left\langle Z_{n}-m , \xi_{n+1} \right\rangle \left( \left\| V_{n} \right\|^{2} + \gamma_{n}^{2} \left\| U_{n+1} \right\|^{2} \right)^{p-2} \right] \\
\label{maj*'znp2} & \leq A_{2}\gamma_{n}^{2}\mathbb{E}\left[ \left\| Z_{n} - m \right\|^{2p} \right] + O \left( \frac{1}{n^{(p+1)\alpha}} \right) . 
\end{align}
In the same way, applying inequalities (\ref{equationdemerde}) and (\ref{lemrecss}), with analogous calculus to the previous ones, one can check that there are positive constants $A_{3},A_{4}$ such that

\begin{align}
\notag (*)'' : & = \left( 1-c'\gamma_{n} \right) \sum_{k=2}^{p-1}\binom{p-1}{k} 2^{k}\gamma_{n}^{k} \mathbb{E}\left[ \left\| Z_{n}-m \right\|^{k} \left\| \xi_{n+1} \right\|^{k} \left\| Z_{n} - m \right\|^{2} \left( \left\| V_{n} \right\|^{2} + \gamma_{n}^{2} \left\| U_{n+1} \right\|^{2} \right)^{p-1-k} \right] \\
\label{maj*''znp2} & \leq A_{3}\gamma_{n}^{2} \mathbb{E}\left[ \left\| Z_{n} - m \right\|^{2p}\right] +  A_{4}\gamma_{n}^{2}\mathbb{E}\left[ \left\| Z_{n} - m \right\|^{2p+2} \right] + O \left( \frac{1}{n^{(p+1)\alpha}} \right) .
\end{align}
Finally, applying inequalities (\ref{maj*znp2}) to (\ref{maj*''znp2}), there are positive constants $B_{0},B_{1},B_{2}$ such that
\begin{align*}
(\star )&  \leq \left( 1-c'\gamma_{n} + B_{0}\gamma_{n}^{2} \right) \mathbb{E}\left[ \left\| Z_{n} - m \right\|^{2p} \right]   + B_{2}\gamma_{n}^{2}\mathbb{E}\left[ \left\| Z_{n} - m \right\|^{2p+2}\right]  + \frac{B_{1}}{n^{(p+1)\alpha}} .
\end{align*}

\bigskip

\textbf{Bounding $(\star \star ) := 2\gamma_{n}\mathbb{E}\left[ \left\| Z_{n} - m \right\| \left\| \delta_{n} \right\| \left\| Z_{n+1} - m \right\|^{2p-2} \right]$.} First, let
\begin{align*}
(*) : & = 2\gamma_{n}\mathbb{E}\left[ \left\| Z_{n} - m \right\| \left\| \delta_{n} \right\| \left( \left\| V_{n} \right\|^{2} + \gamma_{n}^{2}\left\| U_{n+1} \right\|^{2} \right)^{p-1} \right] \\
& \leq 2^{p-1} \gamma_{n} \mathbb{E}\left[ \left\| Z_{n} - m \right\| \left\| \delta_{n} \right\| \left\| V_{n} \right\|^{2p-2} \right] + 2^{p-1}\gamma_{n}^{2p-1}\mathbb{E}\left[ \left\| Z_{n} - m \right\| \left\| \delta_{n} \right\| \left\| U_{n+1} \right\|^{2p-2} \right] .  
\end{align*}
Moreover, thanks to Proposition \ref{propdelta} and inequalities (\ref{equationdemerde}), (\ref{majun}), and (\ref{lemrecss}), one can check that there are positive constants $A_{1},A_{2},A_{3}$ such that
\begin{align}
\label{maj*2znp2}( * ) &  \leq \left( \frac{c'}{4}\gamma_{n} + A_{1}\gamma_{n}^{2} \right) \mathbb{E}\left[ \left\| Z_{n} - m \right\|^{2p}\right]  + A_{2}\gamma_{n} \mathbb{E}\left[ \left\| Z_{n} - m \right\|^{2p+2}\right] + \frac{A_{3}}{n^{(p+1)\alpha}} .
\end{align}
Since $\left( \xi_{n} \right)$ is a sequence of martingale differences, let
\begin{align*}
(*)' : & = 4 (p-1)\gamma_{n}^{2} \mathbb{E}\left[ \left\| Z_{n} - m \right\| \left\| \delta_{n} \right\|\left\langle Z_{n}-m , \xi_{n+1} \right\rangle \left( \left\| V_{n} \right\|^{2} + \gamma_{n}^{2}\left\| U_{n+1} \right\|^{2}\right)^{p-2} \right] \\
& = 4(p-1)  \sum_{k=1}^{p-2}\binom{p-2}{k}\gamma_{n}^{2k+2}\mathbb{E}\left[ \left\| Z_{n} - m \right\| \left\| \delta_{n} \right\| \left\langle Z_{n}-m , \xi_{n+1} \right\rangle  \left\| V_{n} \right\|^{2(p-2-k)} \left\| U_{n+1} \right\|^{2k} \right] .
\end{align*}
Thanks to Proposition \ref{propdelta} and inequalities (\ref{lemrecss}), (\ref{majun}) and (\ref{majxi}), one can check that there are positive constants $A_{1}',A_{2}',A_{3}'$ such that 
\begin{equation}
\label{maj*'2znp2}(*)' \leq A_{1}' \gamma_{n}^{2}\mathbb{E}\left[ \left\| Z_{n} - m \right\|^{2p} \right] + A_{2}'\gamma_{n}^{2}\mathbb{E}\left[ \left\| Z_{n}-m \right\|^{2p+2}\right] +\frac{A_{3}'}{n^{(p+1)\alpha}} .
\end{equation}
Finally, let
\begin{align*}
(*)'' : & = 2 \gamma_{n} \mathbb{E}\left[ \left\| Z_{n} - m \right\| \left\| \delta_{n} \right\| \sum_{k=2}^{p-1}\binom{p-1}{k}2^{k} \gamma_{n}^{k} \left\| Z_{n}-m \right\|^{k} \left\| \xi_{n+1} \right\|^{k} \left( \left\| V_{n} \right\|^{2} + \gamma_{n}^{2}\left\| U_{n+1} \right\|^{2} \right)^{p-1-k} \right] .
\end{align*}
With similar calculus, applying inequalities (\ref{lemrecss}), (\ref{majun}) and (\ref{majxi}), one can check that there are positive constants $A_{0}'',A_{1}'',A_{2}''$ such that
\begin{equation}
\label{maj*''2znp2}(*)'' \leq A_{0}''\gamma_{n}^{2} \mathbb{E}\left[ \left\| Z_{n} - m \right\|^{2p}\right] + A_{1}''\gamma_{n}^{2}\mathbb{E}\left[ \left\| Z_{n} - m \right\|^{2p+2}\right] +   \frac{A_{2}''}{n^{(p+1)\alpha}} .
\end{equation}
Finally, applying inequalities (\ref{maj*2znp2}) to (\ref{maj*''2znp2}), there are positive constants $B_{0}',B_{1}',B_{2}'$ such that 
\begin{align*}
(\star \star ) & \leq \left( \frac{1}{4}c'\gamma_{n} + B_{0}' \gamma_{n}^{2} \right) \mathbb{E}\left[ \left\| Z_{n} - m \right\|^{2p} \right]  + B_{1}' \gamma_{n} \mathbb{E}\left[ \left\| Z_{n} - m \right\|^{2p+2}\right] + \frac{B_{2}'}{n^{(p+1)\alpha}}.
\end{align*}

\textbf{Bounding $\gamma_{n}^{2}\mathbb{E}\left[ \left\| U_{n+1} \right\|^{2}\left\| Z_{n+1}-m \right\|^{2p-2} \right]$.} First, applying inequalities (\ref{lemrecss}),(\ref{equationdemerde}), (\ref{majun}) and (\ref{majxi}), there are positive constants $A_{0},A_{1}$ such that
\begin{equation}
\gamma_{n}^{2}\mathbb{E}\left[ \left\| U_{n+1} \right\|^{2} \left( \left\| V_{n} \right\|^{2} + \gamma_{n}^{2}\left\| U_{n+1} \right\|^{2} \right)^{p-1} \right] \leq A_{0}\gamma_{n}^{2} \mathbb{E}\left[ \left\| Z_{n} - m \right\|^{2p} \right] + \frac{A_{1}}{n^{(p+1)\alpha}}.
\end{equation}
Applying inequalities (\ref{lemrecss}), (\ref{majun}) and (\ref{majxi}), one can check that there are positive constants $A_{0}',A_{1}'$ such that
\begin{equation}
\label{maj*3znp2}\left| 2(p-1)\gamma_{n}^{3}\mathbb{E}\left[ \left\| U_{n+1} \right\|^{2} \left\langle Z_{n}-m , \xi_{n+1} \right\rangle \left( \left\| V_{n} \right\|^{2} + \gamma_{n}^{2} \left\| U_{n+1} \right\|^{2} \right)^{p-2} \right] \right| \leq A_{0}'\gamma_{n}^{2} \mathbb{E}\left[ \left\| Z_{n}  - m \right\|^{2p} \right] + \frac{A_{1}'}{n^{(p+1)\alpha}}.
\end{equation}
Finally, let
\begin{align*}
(*)' : & = \sum_{k=2}^{p-1}\binom{p-1}{k} 2^{k}\gamma_{n}^{k+2}\mathbb{E}\left[ \left\| U_{n+1} \right\|^{2}\left\| Z_{n}-m \right\|^{k} \left\| \xi_{n+1} \right\|^{k} \left( \left\| V_{n} \right\|^{2}  + \gamma_{n}^{2} \left\| U_{n+1} \right\|^{2} \right)^{p-1-k} \right] 
\end{align*}
Applying inequalities (\ref{lemrecss}), (\ref{majun}) and (\ref{majxi}), there are positive constants $A_{0}'',A_{1}''$ such that
\begin{equation}
\label{maj*'3znp2} (*)' \leq A_{0}''\gamma_{n}^{2}\mathbb{E}\left[ \left\| Z_{n} - m \right\|^{2p}\right] + \frac{A_{1}''}{n^{(p+1)\alpha}}.
\end{equation}
Thus, applying inequalities (\ref{maj*3znp2}) and (\ref{maj*'3znp2}), there are positive constants $B_{0}'',B_{1}''$ such that
\[
\gamma_{n}^{2}\mathbb{E}\left[ \left\| U_{n+1} \right\|^{2} \left\| Z_{n+1} - m \right\|^{2p-2} \right] \leq B_{0}''\gamma_{n}^{2}\mathbb{E}\left[ \left\| Z_{n} - m \right\|^{2p} \right] + \frac{B_{1}''}{n^{(p+1)\alpha}}.
\]

\bigskip

\textbf{Bounding $2\gamma_{n}\mathbb{E}\left[ \left\langle Z_{n}-m , \xi_{n+1} \right\rangle \left\| Z_{n+1} - m \right\|^{2p-2} \right]$.} First, since $\left( \xi_{n}\right)$ is a martingale differences sequence adapted to the filtration $\left( \mathcal{F}_{n} \right)$, let
\begin{align*}
(*) : &= 2\gamma_{n} \mathbb{E}\left[ \left\langle \xi_{n+1} , Z_{n}-m \right\rangle \left( \left\| V_{n} \right\|^{2}  + \gamma_{n}^{2} \left\| U_{n+1} \right\|^{2} \right)^{p-1} \right] \\
& \leq \sum_{k=1}^{p-1}\binom{p-1}{k} \gamma_{n}^{2k+1} \mathbb{E}\left[ \left( \left\|  \xi_{n+1} \right\|^{2} + \left\|  Z_{n}-m \right\|^{2} \right) \left\| V_{n} \right\|^{2(p-1-k)}\left\| U_{n+1} \right\|^{2k} \right] .
\end{align*}
Thus, applying inequalities (\ref{lemrecss}), (\ref{majun}) and (\ref{majxi}), one can check that there are positive constants $A_{0},A_{1}$ such that
\begin{equation}
\label{eq2g1}2\gamma_{n} \mathbb{E}\left[ \left\langle \xi_{n+1} , Z_{n}-m \right\rangle \left( \left\| V_{n} \right\|^{2}  + \gamma_{n}^{2} \left\| U_{n+1} \right\|^{2} \right)^{p-1} \right] \leq A_{0}\gamma_{n}^{2}\mathbb{E}\left[ \left\| Z_{n} - m \right\|^{2p} \right] + \frac{A_{1}}{n^{(p+1)\alpha}} .
\end{equation}
In the same way,  since $p \geq 2$, applying inequalities (\ref{lemrecss}), (\ref{majun}) and (\ref{majxi}), one can check that there are positive constants $A_{0}',A_{1}'$ such that
\begin{equation}
\label{eq2g2}4(p-1) \gamma_{n}^{2} \mathbb{E}\left[ \left\langle Z_{n}-m , \xi_{n+1} \right\rangle^{2} \left( \left\| V_{n} \right\|^{2} + \gamma_{n}^{2} \left\| U_{n+1} \right\|^{2} \right)^{p-2} \right] \leq A_{0}' \gamma_{n}^{2} \mathbb{E}\left[ \left\| Z_{n} - m \right\|^{2p} \right] + \frac{A_{1}'}{n^{(p+1)\alpha}} .	
\end{equation}
Finally, applying inequalities (\ref{lemrecss}), (\ref{majun}) and (\ref{majxi}), one can check that there are positive constants $A_{0}'',A_{1}'',A_{2}''$ such that 
\begin{align}
\notag (*)'' : & =  2\sum_{k=2}^{p-1}\binom{p-1}{k} \gamma_{n}^{k+1} \mathbb{E}\left[ \left\langle Z_{n}-m , \xi_{n+1} \right\rangle \left\| Z_{n}-m \right\|^{k} \left\| \xi_{n+1} \right\|^{k} \left( \left\| V_{n} \right\|^{2} + \gamma_{n}^{2}\left\| U_{n+1} \right\|^{2} \right)^{p-k} \right] \\
\label{eq2g3} & \leq A_{0}''\gamma_{n}^{2}\mathbb{E}\left[ \left\| Z_{n} - m \right\|^{2p} \right] + A_{1}''\gamma_{n}^{2}\mathbb{E}\left[ \left\| Z_{n} - m \right\|^{2p+2} \right] + \frac{A_{2}''}{n^{(p+1)\alpha}}.
\end{align}
Then, applying inequalities (\ref{eq2g1}) and (\ref{eq2g3}), there are positive constants $B_{0}''',B_{1}''',B_{2}'''$ such that 
\[
2\gamma_{n} \mathbb{E}\left[ \left\langle Z_{n}-m , \xi_{n+1} \right\rangle \left\| Z_{n+1} - m \right\|^{2p-2} \right] \leq B_{0}''' \gamma_{n}^{2} \mathbb{E}\left[ \left\| Z_{n} - m \right\|^{2p}\right] + B_{1}'''\gamma_{n}^{2}\mathbb{E}\left[ \left\| Z_{n} - m \right\|^{2p+2}\right] + \frac{B_{2}'''}{n^{(p+1)\alpha}} .
\]

\textbf{Conclusion}

We have proved that there are positive constants $c_{0},C_{1},C_{2}$ such that for all $n \geq n_{\alpha}'$;
\[
\mathbb{E}\left[ \left\| Z_{n+1} - m \right\|^{2p} \right] \leq \left( 1 - \frac{c'}{2}\gamma_{n} + c_{0}\gamma_{n}^{2} \right)\mathbb{E}\left[ \left\| Z_{n} - m \right\|^{2p} \right] + C_{1}\gamma_{n} \mathbb{E}\left[ \left\| Z_{n} - m \right\|^{2p+2} \right] + \frac{C_{2}}{n^{(p+1)\alpha}}.
\]
Then, there are a positive constant $c$ and a rank $n_{\alpha} \geq n_{\alpha}'$ such that for all $n \geq n_{\alpha}$, \\$1~-~\frac{c'}{2}\gamma_{n} ~+~ c_{0}~\gamma_{n}^{2} ~\leq ~1~- ~c\gamma_{n}$, and in a particular case, for all $n \geq n_{\alpha}$,
\begin{equation}
\mathbb{E}\left[ \left\| Z_{n+1} - m \right\|^{2p} \right] \leq \left( 1 - c\gamma_{n} \right)\mathbb{E}\left[ \left\| Z_{n} - m \right\|^{2p} \right] + C_{1}\gamma_{n} \mathbb{E}\left[ \left\| Z_{n} - m \right\|^{2p+2} \right] + \frac{C_{2}}{n^{(p+1)\alpha}}.
\end{equation}
\end{proof}

\section{Some useful existing results}
Let us recall Robbins-Siegmund theorem (see \cite{Duf97} for instance):
\begin{theo}\label{theors}[Robbins-Siegmund theorem]
Let  $\left( V_{n} \right),\left( A_{n} \right),\left(B_{n} \right),\left( C_{n} \right)$ be non negative random variables adapted to a filtration $\left( \mathcal{F}_{n} \right)$ such that
\[
\mathbb{E}\left[ V_{n+1}  |\mathcal{F}_{n} \right] \leq V_{n} \left( 1+A_{n} \right) + B_{n} - C_{n} .
\]
Then, on $\Gamma = \left\lbrace \sum_{n\geq 1} A_{n} < + \infty  \text{ and } \sum_{n\geq 1} B_{n} < +\infty \right\rbrace$, $\left( V_{n} \right)$ converges almost surely to a finite random variable $V_{\infty}$ and $\sum_{n\geq 1} C_{n} < + \infty$ almost surely. 
\end{theo}
Let us now recall Lemma A.1 in \cite{godichon2015}:
\begin{lem}\label{lemmaa1}
Let $p,n$ be two positive integers and let $a_{1},\ldots ,a_{n}$ be positive constants. Then,
\[
\left( \sum_{j=1}^{n}a_{j} \right)^{p} = n^{p-1} \sum_{j=1}^{n} a_{j}^{p}.
\]
\end{lem}

\medskip

\end{appendix}

\def\cprime{$'$}

\end{document}